\theoremstyle{plain}
\newtheorem{lemma}{Lemma}
\newtheorem{theorem}{Theorem}
\newtheorem{prop}[lemma]{Proposition}
\numberwithin{equation}{section}
\theoremstyle{definition}
\newtheorem*{Acknowledgements}{Acknowledgement}
\theoremstyle{remark}
\renewcommand{\geq}{\geqslant}
\renewcommand{\leq}{\leqslant}
\DeclareMathOperator{\GL}{GL}
\DeclareMathOperator{\SL}{SL}
\DeclareMathOperator{\sgn}{sgn}
\DeclareMathOperator{\res}{res}
\DeclareMathOperator{\arctanh}{arctanh}
\DeclareMathOperator{\arcsinh}{arcsinh}
\DeclareMathOperator{\arccosh}{arccosh}
\DeclareMathOperator{\Ad}{Ad}
\DeclareMathOperator{\cond}{cond}
\newcommand{\eps}{\varepsilon}
\def\PGL{\operatorname{PGL}}
\newcommand{\changed}[1]{{\color{black} #1}}
\newsavebox\CBox
\newcommand\hcancel[2][0.5pt]{%
  \changed{\ifmmode\sbox\CBox{$#2$}\else\sbox\CBox{#2}\fi%
  \makebox[0pt][l]{\usebox\CBox}%
  \rule[0.5\ht\CBox-#1/2]{\wd\CBox}{#1}}}
\DeclareRobustCommand\widecheck[1]{{\mathpalette\@widecheck{#1}}}
\def\@widecheck#1#2{%
    \setbox\z@\hbox{\m@th$#1#2$}%
    \setbox\tw@\hbox{\m@th$#1%
       \widehat{%
          \vrule\@width\z@\@height\ht\z@
          \vrule\@height\z@\@width\wd\z@}$}%
    \dp\tw@-\ht\z@
    \@tempdima\ht\z@ \advance\@tempdima2\ht\tw@ \divide\@tempdima\thr@@
    \setbox\tw@\hbox{%
       \raise\@tempdima\hbox{\scalebox{1}[-1]{\lower\@tempdima\box
\tw@}}}%
    {\ooalign{\box\tw@ \cr \box\z@}}}
\newcommand{\Z}{\mathbb{Z}}
\newcommand{\Q}{\mathbb{Q}}
\newcommand{\R}{\mathbb{R}}
\newcommand{\C}{\mathbb{C}}
\newcommand{\Ic}{\mathcal{I}}
\def\cond{\operatorname{cond}}
\renewcommand{\Bbb}{\mathbb}
\begin{document}

\author{Valentin Blomer}
\author{Subhajit Jana}
\author{Paul D. Nelson}
 
\address{Mathematisches Institut, Endenicher Allee 60, 53115 Bonn, Germany}
\email{blomer@math.uni-bonn.de}

 \address{Max-Planck-Institut f\"ur Mathematik, Vivatsgasse 7, 53111 Bonn, Germany}
\email{subhajit@mpim-bonn.mpg.de}

\address{Institute for Advanced Study, 1 Einstein Dr, Princeton, NJ 08540}
\email{nelson.paul.david@gmail.com}
  
\title{The Weyl bound for triple product $L$-functions}

\thanks{The first author was supported in part by the DFG-SNF lead agency program grant BL 915/2-2 as well as  Germany's Excellence Strategy - EXC-2047/1 - 390685813. }

\begin{abstract} Let $\pi_1, \pi_2, \pi_3$ be three  cuspidal  automorphic representations for the group ${\rm SL}(2, \Bbb{Z})$, where $\pi_1$ and $\pi_2$ are fixed and $\pi_3$ has large analytic conductor. We prove a subconvex bound for $L(1/2, \pi_1 \otimes \pi_2 \otimes \pi_3)$ of Weyl-type quality. Allowing $\pi_3$  to be an Eisenstein series we also obtain a Weyl-type subconvex bound for $L(1/2 + it, \pi_1 \otimes \pi_2)$. 
 
\end{abstract}

\subjclass[2010]{Primary: 11M41, 11F70, 11F72}
\keywords{subconvexity, triple product $L$-function, analytic newvector, shifted convolution problem}

\setcounter{tocdepth}{2}  \maketitle 

\maketitle

\section{Introduction} 

\subsection{Weyl-type subconvexity}  Subconvexity estimates belong to the core topics in the theory of $L$-functions and are one of the most challenging testing grounds for the strength of existing technology. If $C$ denotes the analytic conductor of the relevant $L$-function (restricted to the parameters of interest), then the Phragm\'en--Lindel\"of principle gives the bound $C^{1/4+\eps}$ for the $L$-function on the central line $\Re s = 1/2$. In the most favourable cases, one can obtain an upper bound $C^{1/6+\eps}$, which we refer to as a Weyl-type subconvex bound.  For instance, a classical result states that the Riemann zeta function satisfies the bound
\begin{equation*} 
  \zeta(1/2 + it) \ll_{\eps} (1+|t|)^{1/6+\eps}
\end{equation*}
on the critical line. Based on work of Weyl \cite{We},  it was proved first by Hardy and Littlewood (cf.\ \cite{Li}), and first written down by Landau \cite{La} in a slightly refined form and generalized to all Dirichlet $L$-functions. Results of similar strength  exist in the  archimedean aspect for automorphic $L$-functions of degree 2, starting with the work of Good \cite{Go} and culminating in the hybrid bound of Jutila and Motohashi \cite{JM}. 
We also have a Weyl-type bound in degree 4 in some limited cases pertaining to Rankin-Selberg $L$-functions, such as \cite{JM2, LLY}
$$L(1/2, f \otimes g) \ll_{g, \eps} C(f)^{1/3 +\eps}$$
for two cusp forms $f, g$ for ${\rm SL}_2(\Bbb{Z})$, where
$C(f)$ denotes the conductor of $f$ as defined in \S\ref{223}
below.
Although
slightly better bounds are available
for ${\rm GL}(1)$ 
\cite{Bo, Mi},
the Weyl exponent marks a natural
barrier that has never been improved, and rarely been reached,
beyond ${\rm GL}(1)$. 
We note that for some applications (see
\cite{GS}, \cite{Ma}), the essential input is a Weyl-type
subconvex bound (or something approaching it), rather than
merely any nontrivial subconvex bound.\footnote{
  Indeed, to show that the number of zeros on $i[1, \infty)$ of
  a holomorphic Hecke eigenform $f$ of weight $k$ tends to
  infinity as $k\rightarrow\infty$, the proof in \cite{GS} needs
  $L(1/2 + it, f) \ll k^{0.335}$ with polynomial dependence in
  $t$.
}
We note also that Petrow
and Young recently established Weyl-type subconvex bounds
for ${\rm GL}(1)$ in the level aspect, improving spectacularly upon the
decades-old Burgess-type bounds (see \cite{PY1, PY2, Nel2}).

A celebrated result of  Bernstein and Reznikov  established for
the first time
subconvex bounds for certain $L$-functions of degree 8. For two fixed spherical cuspidal automorphic representations $\pi_1, \pi_2$ (i.e.\ generated by   Maa{\ss} forms   for ${\rm SL}_2(\Bbb{Z})$)  and another spherical cuspidal automorphic representation $\pi_3$   of large analytic conductor  $C(\pi_3)$ 
they proved \cite{BR1, BR2}
\begin{equation}\label{BR}
\sum_{T \leq C(\pi_3)^{1/2} \leq T+T^{1/3}}L(1/2, \pi_1 \otimes \pi_2 \otimes \pi_3) \ll _{\pi_1, \pi_2, \eps} T^{5/3+\eps},
\end{equation}
which implies (by non-negativity of the central value) in
particular $L(1/2,  \pi_1 \otimes \pi_2 \otimes \pi_3) \ll
_{\pi_1, \pi_2, \eps} C(\pi_3)^{5/6+\eps}$. The proof employs a
beautiful combination of
representation theory,
invariant norms and asymptotic analysis of oscillatory Airy-type
integrals.
In fact, their result is really an estimate for triple product
periods,
as $L$-functions enter only through the period formula of Watson--Ichino \cite{Ic}. 

We observe, however, that \eqref{BR} is not optimal. The  Lindel\"of hypothesis   suggests the Weyl-type bound with an exponent $4/3$ instead of $5/3$.  That such a result might be within reach   was indicated by Suvitie \cite{Su1}. For a fixed holomorphic cusp form $F$ of weight $k$ and a Maa{\ss} form $h$  
she showed 
\begin{equation}\label{showed}
  \sum_{T \leq C(h)^{1/2} \leq T+T^{1/3}} e^{\pi r_h} |\langle y^k |F|^2, h \rangle|^2 \ll_{F, \eps} T^{2k - \frac{2}{3} + \eps}
  \end{equation}
which via the formula of Watson--Ichino translates into
$$\sum_{T \leq C(h)^{1/2} \leq T+T^{1/3}}  L(1/2, F \otimes F \otimes h) \ll_{F, \eps}  T^{4/3 + \eps}.$$
This $L$-function is not primitive,
as it factorizes into a degree 6 and a degree 2 $L$-function, but the same argument would work for $F G$ instead of $|F|^2$ in \eqref{showed}. More seriously, however, the proof starts by replacing $F$ with a holomorphic Poincar\'e series and unfolding the inner product, a route that is  not available in general. In fact, an attempt to generalize this to Maa{\ss} forms remained incomplete \cite{Su2} and seems not to work. In particular, the work of Bernstein--Reznikov remained unimproved. 

\medskip

In this paper we establish the Weyl-type bound for triple product  $L$-functions in a uniform fashion for all combinations of local types at infinity, i.e., any of the three factors can be holomorphic or Maa{\ss}. As mentioned before, the Weyl-type bound marks the natural limit  of all present day approaches to subconvexity.
The key novelty in our paper is the method. We combine in a substantial way 
representation theory, local harmonic analysis,  and analytic number theory to establish a robust method for the subconvexity problem for triple product $L$-functions.  
\begin{theorem}\label{thm1} Let $\pi_1, \pi_2$ be  two  fixed cuspidal automorphic representations for the group ${\rm SL}_2(\Bbb{Z})$. Let $\pi_3$ run over cuspidal automorphic representations for ${\rm SL}_2(\Bbb{Z})$ with conductor satisfying $T \leq C(\pi_3)^{1/2} \leq T+T^{1/3}$. 
 Then
$$\sum_{T \leq C(\pi_3)^{1/2} \leq T+T^{1/3}} L(1/2, \pi_1 \otimes \pi_2 \otimes \pi_3) \ll_{\pi_1, \pi_2, \eps} T^{4/3 + \eps},$$
in particular
$$L(1/2, \pi_1 \otimes \pi_2 \otimes \pi_3) \ll_{\pi_1, \pi_2, \eps} C(\pi_3)^{2/3 + \eps}$$
for every $\eps > 0$.  
\end{theorem}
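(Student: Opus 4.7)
The plan is to bound a spectral second moment of triple product periods via a pre-trace formula. By the Watson--Ichino formula, $L(1/2,\pi_1\otimes\pi_2\otimes\pi_3)$ factors as a product of local integrals with the global period $|\langle\varphi_1\varphi_2,\varphi_3\rangle|^2$. I would fix $\varphi_1,\varphi_2$ of given archimedean type, and for each $\pi_3$ choose an \emph{analytic newvector} $\varphi_3\in\pi_3$ microlocalized on a ball of scale $T^{-1}$ around the identity of $\PGL_2(\Bbb{R})$, so that the archimedean factor in Ichino's identity has predictable size on the full spectral range of interest, uniformly across discrete series and principal series types.

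\textbf{Step 2 (pre-trace formula and diagonal).} Next I would select a nonnegative test function $k$ on $\PGL_2(\Bbb{R})$ whose spherical/Selberg transform dominates the indicator of $r_{\pi_3}\in[T,T+T^{1/3}]$. Then
$$\sum_{\pi_3}\widehat{k}(\pi_3)\,\bigl|\langle\varphi_1\varphi_2,\varphi_3\rangle\bigr|^2 \;=\; \int\!\!\int (\varphi_1\overline{\varphi_1})(g)(\varphi_2\overline{\varphi_2})(h)\,K(g,h)\,dg\,dh \;+\; (\text{Eis.}),$$
where $K(g,h)=\sum_{\gamma\in\SL_2(\Bbb{Z})} k(g^{-1}\gamma h)$ is the automorphic kernel and the Eisenstein contribution is harmless because $\pi_1,\pi_2$ are fixed. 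Splitting $K$ geometrically by $\Gamma$-orbits, the identity $\gamma=e$ yields a diagonal term $\|\varphi_1\varphi_2\|_2^2\cdot\int k$ which, after renormalizing the local factors and accounting for the Plancherel density at parameter $T$, gives exactly the predicted main contribution of size $T^{4/3}$.

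\textbf{Step 3 (off-diagonal and main obstacle).} For $\gamma\ne e$, the support of $k$ constrains $\gamma$ to lie within a $T^{-2/3}$-neighborhood of a diagonal torus. After Fourier expanding $\varphi_i\overline{\varphi_i}$ along the unipotent and applying Voronoi summation, the off-diagonal collapses to shifted convolution sums
$$S(h;N) \;=\; \sum_{n\ge 1} \lambda_{\pi_1}(n)\,\overline{\lambda_{\pi_2}(n+h)}\, W_{N,h}(n),\qquad N\ll T^{2},\ |h|\ll T^{1+\eps},$$
with oscillatory archimedean weights $W_{N,h}$ coming from the stationary phase of $k$. The decisive step --- and the \emph{main obstacle} --- is establishing an averaged bound $\sum_{|h|\ll T} |S(h;N)|\ll T^{4/3+\eps}$ with polynomial uniformity in $W_{N,h}$. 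My plan is to invoke the spectral theory of shifted convolutions in the spirit of Blomer--Harcos, or equivalently a $\delta$-symbol decomposition followed by a dual second-moment exchange (spectral reciprocity), which should convert the off-diagonal into another spectrally localized average amenable to Kuznetsov/large-sieve estimation. Combining the diagonal main term with such an off-diagonal estimate yields the claimed $T^{4/3+\eps}$ moment bound, and positivity of central $L$-values then delivers the individual subconvex bound $L(1/2,\pi_1\otimes\pi_2\otimes\pi_3)\ll C(\pi_3)^{2/3+\eps}$.
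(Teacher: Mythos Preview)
Your pre-trace identity in Step 2 is miswritten: the integrand for $\sum_{\pi_3}\widehat k(\pi_3)\,|\langle\varphi_1\varphi_2,\varphi_3\rangle|^2$ is $(\varphi_1\varphi_2)(g)\,\overline{(\varphi_1\varphi_2)}(h)$, not $|\varphi_1|^2(g)\,|\varphi_2|^2(h)$, and for a non-spherical analytic newvector $\varphi_3$ there is no single scalar transform $\widehat k(\pi_3)$ unless $k$ is bi-$K$-invariant. But the real obstruction is structural. With $\varphi_1,\varphi_2$ a fixed simple tensor, your scheme is exactly the Bernstein--Reznikov spectral-identity route discussed in \S1.3: what one must invert is the local triple-product weight $\mathcal L_\infty(\varphi_1,\varphi_2,\varphi_3)$, and for simple tensors this is at best of size $T^{-5/3}$, so one cannot do better than $\sum L\ll T^{5/3+\eps}$. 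Choosing a sharply localized $k$ does not help, because the effective spectral weight is the product $\widehat k(\pi_3)\,\mathcal L_\infty$, and $\mathcal L_\infty$ is the bottleneck; for spherical vectors it is in fact exponentially small in $t_{\pi_3}$.

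The paper avoids this barrier by not averaging periods directly. It first expresses each \emph{individual} central value as a bilinear form in the Hecke eigenvalues $\lambda_{\pi_3}(m)$: the vector $v_2=a(Q)v_2^0$ is chosen to depend on $Q=C(\pi_3)$ (Michel--Venkatesh style), giving $\mathcal L_\infty\gg Q^{-1}$ (Theorem \ref{lower-bound}); then Zagier's regularized unfolding \eqref{zag} reduces the global period to a triple Whittaker integral, evaluated asymptotically in Theorem \ref{thm:triple-whittaker}, which yields the shifted-convolution description \eqref{alternative}. Only at that stage does one average over $\pi_3$, and the tool is the \emph{Kuznetsov} formula (a relative trace formula on Whittaker models), not a pre-trace formula on $\Gamma\backslash G$. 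The ensuing chain Kuznetsov $\to$ Voronoi $\to$ Jutila circle method $\to$ Voronoi $\to$ Kuznetsov $\to$ large sieve (Theorem \ref{prop3}) implements a spectral reciprocity that converts the window of length $T^{1/3}$ into a dual spectral sum of length $T^{2/3}$, and this balance is what produces the Weyl exponent. Your Step 3 instincts about shifted convolutions and reciprocity point in the right direction, but the input has to be the unfolded expression \eqref{alternative}, not the geometric side of a pre-trace formula.
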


An inspection of the proof shows that the dependence on the analytic conductors of $\pi_1, \pi_2$ is polynomial. Under Watson's formula \cite{Wat},
the latter estimate translates
to bounds
for triple product
integrals of Maa{\ss} forms
$\varphi_j$
of eigenvalue $1/4+t_j^2$
($j=1,2,3$):
\[
  \Big\lvert
    \int_{\SL_2(\mathbb{Z}) \backslash \mathbb{H}}
    \varphi_1 \varphi_2 \varphi_3
  \Big\rvert^2
  \ll_{\varphi_1, \varphi_2,\eps}
  |t_3|^{-2/3+\eps} \exp(- \pi |t_3|),
\]
giving a further improvement beyond that in \cite{BR2} on the
general exponential decay bounds of \cite{Sar94}.

We can allow  $\pi_3$ to be an Eisenstein series, and our proof yields as a by-product a Weyl-type bound for Rankin-Selberg $L$-functions.
 \begin{theorem}\label{thm2} Let $\pi_1, \pi_2$ be  two  fixed  cuspidal automorphic representations for the group ${\rm SL}_2(\Bbb{Z})$. Then
$$\int_{T}^{T+T^{1/3}} |L(1/2 + it, \pi_1 \otimes \pi_2 )|^2 dt  \ll_{\pi_1, \pi_2, \eps} T^{4/3 + \eps},$$
in particular
$$L(1/2+it , \pi_1 \otimes \pi_2 ) \ll_{\pi_1, \pi_2, \eps}(1 + |t|)^{2/3 + \eps}$$
for every $\eps > 0$ and $t \in \Bbb{R}$. 
\end{theorem}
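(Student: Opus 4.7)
The plan is to adapt the proof of Theorem \ref{thm1} to allow $\pi_3$ to be a spherical Eisenstein series $E(\cdot, 1/2+it)$ parametrized by $t \in [T, T+T^{1/3}]$. Under this substitution the Watson--Ichino formula is replaced by Rankin--Selberg unfolding, which evaluates the triple product period
\[
\int_{\SL_2(\Z)\bs\mathbb{H}} \varphi_1\overline{\varphi_2}\, E(\cdot, 1/2+it)
\]
in terms of $L(1/2+it, \pi_1 \otimes \pi_2)$ up to polynomially bounded archimedean factors. The squared modulus of this period thus takes the role of the central triple product value $L(1/2, \pi_1 \otimes \pi_2 \otimes \pi_3)$ in Theorem \ref{thm1}, with analytic conductor $(1+|t|)^4$ in place of $C(\pi_3)^2$.

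First I would identify the point in the proof of Theorem \ref{thm1} at which the spectral decomposition of $L^2(\SL_2(\Z)\bs\mathbb{H})$ is invoked. Typically one constructs a relative trace formula, or a test function whose spectral side is a weighted sum of squared triple product periods. The spectral decomposition naturally includes both the cuspidal discrete part and the continuous Eisenstein integral, and the geometric side --- estimated via analytic newvector constructions, archimedean oscillatory integral bounds, and the shifted convolution problem --- is agnostic to this distinction, since the local harmonic analysis depends only on the unitary dual parameters of $\pi_3$. Consequently the same bound $T^{4/3+\eps}$ dominates both contributions.

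By positivity, dropping the (non-negative) cuspidal sum yields the second moment bound of Theorem \ref{thm2}. The pointwise estimate $L(1/2+it,\pi_1\otimes\pi_2) \ll (1+|t|)^{2/3+\eps}$ then follows by a standard localization argument: the spectral weight can be chosen concentrated around any prescribed $t_0$, and a Sobolev-type inequality (or an approximate functional equation combined with a smoothed short-interval mean value) controls the pointwise value by a second moment of essentially the same quality.

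The main technical obstacle is the non-$L^2$ nature of $E(\cdot, 1/2+it)$: the regularization of divergent integrals on the geometric side (via, \emph{e.g.}, Zagier's truncation) must be tracked quantitatively so that no loss occurs. One must also align the archimedean normalization of the Rankin--Selberg formula with that of the Watson--Ichino period used in Theorem \ref{thm1}, which is a routine but somewhat tedious computation involving local zeta integrals for principal series of $\PGL_2(\R)$ at the spectral parameter $t$. Once these adjustments are made, the proof runs in parallel with the cuspidal case.
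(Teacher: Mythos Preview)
Your proposal is essentially correct and matches the paper's approach: the entire machinery of \S\ref{local}--\S\ref{shifted} is designed so that the cuspidal and Eisenstein cases run in parallel, and the spectral average in Theorem \ref{prop3} (see \eqref{L1}) already contains the continuous spectrum integral over $t$, so the moment bound for Theorem \ref{thm2} falls out simultaneously with that for Theorem \ref{thm1}. One remark: your ``main technical obstacle'' concerning the non-$L^2$ nature of the Eisenstein series is overstated, since $v_1 v_2$ is rapidly decreasing (both $\pi_1,\pi_2$ are cuspidal) and hence all integrals against $v_3$ converge absolutely whether $\pi_3$ is cuspidal or Eisenstein; the regularized unfolding \eqref{zag} is invoked uniformly for all $\pi_3$ and is not specific to the Eisenstein case. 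For the pointwise bound, the paper appeals to the standard functional-equation argument of Good \cite[p.~63]{Go}, which is the same mechanism you describe.
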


For the rest of the paper, all implied constants may depend on $\eps$, and we suppress it in subsequent formul\ae. The weaker bound 
$L(1/2+it , \pi_1 \otimes \pi_2 ) \ll_{\pi_1, \pi_2}(1 + |t|)^{5/6 + \eps}$ is implicit in \cite[Remarks 7.2.2.2]{BR2} and was the best known result until now. By a method purely based on analytic number theory,  the bound $L(1/2+it , \pi_1 \otimes \pi_2 ) \ll_{\pi_1, \pi_2}(1 + |t|)^{15/16 + \eps}$ was recently shown  in \cite{ASS}. For bounds of triple product $L$-functions in the level aspect see \cite{Ve, Hu}. 

\subsection{Remarks}   ${}$

1) Our results feature   ``pure'' exponents of Weyl-type quality that are independent of bounds towards the Ramanujan conjecture or the Selberg eigenvalue conjecture. The proof uses at one place that the smallest non-zero Laplace eigenvalue is larger than $3/16$. 
 \medskip

2)  
In principle,
the proof produces an asymptotic  formula. If $\psi$ is a
sufficiently regular test function with ``essential support''
in $[T, T+H]$,
e.g.\ $\psi(t) = \exp(-(t - T)^2 H^{-2})$, and
$t_{\pi_3} \asymp \sqrt{C(\pi_3)}$
(cf.\ \eqref{maass}) denotes the spectral parameter of $\pi_3$, then with the same notation and under the same assumptions as in Theorem \ref{thm1}, one can relate
\begin{equation}\label{spec}
\begin{split}
& \sum_{\pi_3} \psi(t_{\pi_3}) \frac{L(1/2, \pi_1 \otimes \pi_2 \otimes \pi_3)}{L(1, \Ad^2 \pi_3)} + \int_{t\in\Bbb{R}} \psi(t) \frac{|L(1/2 + it, \pi_1 \otimes \pi_2 )|}{|\zeta(1 + 2it)|^2} \frac{dt}{2\pi} \\
\end{split}
\end{equation}
 to 
\begin{displaymath}
\begin{split}
c   L(1, \Ad^2 \pi_1) 
L(1, \Ad^2 \pi_2) TH+ O(T^{3/2+\eps}H^{-1/2})
\end{split}
\end{displaymath}
for a suitable constant $c$ (depending on $\psi$).\medskip

3) The   proof of Theorem \ref{thm1} has the shape of a
\emph{reciprocity formula} as for instance in \cite{BLM}.  A
spectral sum in a window $[T, T+H]$ as in \eqref{spec} is
ultimately transformed into a spectral sum of similar shape with
spectral parameter up to  $\ll T/H$ (see  \eqref{dualspectral},
\eqref{z}, \eqref{C}). This is analogous to the discussion after
(1.11) in \cite{BLM}, and a new instance of a reciprocity
phenomenon.  The optimal choice is $H = T^{1/3}$,
in which case both spectral sums have length $T^{4/3}$. This yields   the Weyl bound, and we see that the Weyl bound is indeed the natural limit from the point of view of spectral analysis. An abstract version of the underlying reciprocity formula is displayed in \eqref{eqn:spectral-identity-L-functions} below which features central $L$-values as well as their ``canonical square roots''. \medskip

4) Implicit in the proof of Theorem \ref{thm1} is an alternative description of the central triple product $L$-value $L(1/2, \pi_1 \otimes  \pi_2\otimes \pi_3)$ in terms of a certain shifted convolution problem very roughly of the shape 
\begin{multline}\label{alternative}
L(1/2, \pi_1\otimes \pi_2 \otimes \pi_3) \\ \approx \left| \frac{1}{t^{1/2}_{\pi_3}} \sum_{\nu \asymp 1} \sum_{m \ll t_{\pi_3}^2} \frac{\lambda_{\pi_3}(m) \lambda_{\pi_2}(m+\nu)\lambda_{\pi_1}(\nu)}{m^{1/4}} \exp\left( \pm 2i t_{\pi_3}\sqrt{\frac{ \nu}{m}}\right)  \right|^2.
\end{multline}
The $\approx$ sign has to be interpreted in a broad sense
(see \S\ref{final} for details). In the generic range $m \asymp t_{\pi_3}^2$ the oscillatory factor is flat (see \S\ref{weight} for definition of flatness).  

\subsection{Comparison with Bernstein--Reznikov and Michel--Venkatesh}
It is instructive
to compare
our approach for studying the moment
\eqref{BR}
with those of
Bernstein--Reznikov
\cite{BR1, BR2}
and Michel--Venkatesh \cite{MV}.

To begin, we briefly sketch the approach to the subconvexity problem introduced by
Bernstein--Reznikov.
We borrow some presentation features from
Michel--Venkatesh, see especially
\cite[\S1.1.1, \S1.1.3]{MV}.
The starting point of this approach is the triple product
formula \cite{Ic}:
for unit vectors $v_j \in \pi_j$ we have 
\begin{multline}\label{period}
\frac{L(1/2, \pi_1\otimes \pi_2 \otimes \pi_3)}{L(1, \Ad^2 \pi_1)L(1, \Ad^2 \pi_2)
L(1, \Ad^2 \pi_3)} \mathcal{L}_{\infty}(v_1, v_2, v_3)\\ = \Big|\int_{g\in {\rm SL}_2(\Bbb{Z}) \backslash {\rm SL}_2(\Bbb{R})} v_1 v_2v_3(g)\,  dg\Big|^2
\end{multline}
for a suitable local factor
$\mathcal{L}_{\infty}(v_1, v_2,v_3)$ (a constant
multiple of a matrix coefficient integral).

For unit vectors  $v_1 \in \pi_1$ and $v_2 \in \pi_2$ we consider the inner product identity
\begin{equation}\label{eqn:obvious-inner-product-identity}
  \langle v_1 v_2, v_1 v_2 \rangle
  =
  \int_{g\in \SL_2(\mathbb{Z})
    \backslash \SL_2(\mathbb{R})
  }
  |v_1 v_2|^2(g) \, d g
  =
  \langle |v_1|^2, |v_2|^2 \rangle.
\end{equation}
By expanding each of these inner products
over the spectrum of $L^2(\SL_2(\mathbb{Z}) \backslash
\SL_2(\mathbb{R}))$
and applying \eqref{period},
we obtain a spectral identity
of families of $L$-functions, roughly of the shape 
\begin{multline}\label{eqn:spectral-identity-L-functions}
  \sum_{\pi_3}
  h(\pi_3)
  L(1/2,\pi_1 \otimes \pi_2 \otimes \pi_3)\\
  \approx
  1 + 
  \sum_{\sigma}
  \tilde{h}(\sigma)
  \sqrt{L(1/2,\pi_1 \otimes \pi_1 \otimes \sigma)
    L(1/2,\pi_2 \otimes \pi_2 \otimes \sigma)}.
\end{multline}
Here $\pi_3$ and $\sigma$ run over cuspidal automorphic
representations of $\SL_2(\mathbb{Z})$, the square roots of $L$-values are ``canonical square roots'' (in the sense \cite[\S 1.1.3]{MV}) and 
the meaning of ``$\approx$''
is that
\begin{itemize}
\item we have suppressed adjoint $L$-factors
and other proportionality constants, and
\item we have elided the contribution of the continuous spectrum
  and all degenerate terms except for the ``expected main term'' $1$,
  which arises from the inner product
  $\langle |v_1|^2, 1 \rangle \langle 1, |v_2|^2 \rangle = 1$
  (up to volume factors).
\end{itemize}
The weight functions $h$ and $\tilde{h}$ depend upon the choice
of vectors $v_1$ and $v_2$.

The weights $h(\pi_3)$ and the $L$-values
$L(1/2, \pi_1 \otimes \pi_2 \otimes \pi_3)$
are known to be nonnegative,
so if we can bound the right hand side of
\eqref{eqn:spectral-identity-L-functions}
by $O(1)$ (the natural limit, in view of the expected main
term),
then we deduce by dropping all but
one term the estimate
\begin{equation}\label{eqn:positivity-estimate-h-pi-1}
  L(1/2,\pi_1 \otimes \pi_2 \otimes \pi_3)
  \ll
  1/h(\pi_3).
\end{equation}

Given some $\pi_3$ with $\cond(\pi_3)^{1/2} \asymp T$, we
now
face the optimization problem of choosing unit vectors $v_1$
and $v_2$ for which $h(\pi_3)$ is as large as possible, so that
the bound \eqref{eqn:positivity-estimate-h-pi-1} is as strong as
possible.
Bernstein--Reznikov
\cite[(2.6.3), Prop 9.1]{BR2}
showed (for 
$\pi_j$ spherical) that one may choose $v_1$ and $v_2$ so
that $h(\pi_3)$ is roughly $T^{-5/3}$.
This choice
and
a suitable bound for the global period
eventually yields their estimate \eqref{BR}.
Michel--Venkatesh
\cite[\S3.6-3.7]{MV}
(for $\pi_1$ tempered and spherical) employed a simpler
choice of vectors for which $h(\pi_3)$ is
of size $T^{-2}$; for this
choice, the estimate \eqref{eqn:positivity-estimate-h-pi-1} only
recovers the convexity bound, but Michel--Venkatesh managed to
apply the amplification method
to save a further small power of
$T$ 
(in a more general ``all aspects'' setting).

To approach the Weyl bound
$L(1/2, \pi_1 \otimes \pi_2 \otimes \pi_3) \ll T^{4/3+\eps}$ using
\eqref{eqn:spectral-identity-L-functions} would seem to require
producing $v_1$ and $v_2$ for which $h(\pi_3)$ is at least
$T^{-4/3}$, but the analysis of Bernstein--Reznikov strongly
suggests that their
lower bound $T^{-5/3}$ is best possible.
To obtain a stronger lower bound
thus requires a more flexible class
of weight functions $h(\pi_3)$.
Such a class
may be obtained from the generalization of
\eqref{eqn:obvious-inner-product-identity}
to higher-rank tensors
$\sum_{j} v_{1,j} \otimes v_{2,j} \in \pi_1 \otimes \pi_2$,
namely
\begin{equation*}
  \sum_{j,k}
  \langle v_{1,j} v_{2,j}, v_{1,k} v_{2,k} \rangle
  =
  \sum_{j,k}
  \langle v_{1,j} \overline{v_{1,k}},
  v_{2,k} \overline{v_{2,j}}
  \rangle.
\end{equation*}
Such tensors yield
more flexible forms of \eqref{eqn:spectral-identity-L-functions}.
One could hope to find a tensor
$\sum_{j} v_{1,j} \otimes v_{2,j}$ for which the corresponding
weight $h(\pi_3)$ localizes on $\pi_3$ satisfying
$T \leq \cond(\pi_3)^{1/2} \leq T+T^{1/3}$ and for which
the right hand side of the corresponding spectral identity as in
\eqref{eqn:spectral-identity-L-functions} may be effectively
bounded.  To implement this idea in practice would require a
careful study of the spaces of test functions $\{ h \}$ and
$\{\tilde{h}\}$ as well as the transform relating them.
Unfortunately, 
such a study has not yet been carried out,
and does not seem
straightforward
in the generality of
Theorem \ref{thm1}
(which, we should emphasize,
imposes no local conditions
on the representations $\pi_j$).

The method of this paper consists of two stages.
We first use
a somewhat crude choice of $v_1$ and $v_2$
(like in the
work of Michel--Venkatesh)
and
an unfolding technique (see \S\ref{sec:unfolding}, in particular \eqref{zag})
to express the $L$-values of interest
as bilinear forms in the Hecke eigenvalues of the varying form
$\pi_3$.
We then average over the spectral window
$T \leq C(\pi_3)^{1/2} \leq T+T^{1/3}$ of interest by means of analytic number theory, and in particular 
 the Kuznetsov formula.

Our approach has in common with
the works
of Bernstein--Reznikov and Michel--Venkatesh
that we make use of the well-developed
theory of integral representations of $L$-functions
to produce and analyze our
test vectors.
In this respect, our basic framework
owes much to those works.
The essential difference
is that we analyze sums
over much narrower spectral windows,
and the more technical difference is that
we implement this analysis
using the Kuznetsov formula
rather than the spectral theory of triple product periods.

The main advantage of our
approach is that we
can make full use of available technology
related to the  Kuznetsov formula
(abundance of test
functions, explicit integral transforms, Bessel function
asymptotics, large sieve estimates $\dotsc$),
whose avatars are not available at the level
of the triple product periods.
To the best of our knowledge,
this paper is the first to employ
such a combination
of
the theory of
integral representations and analytic number theory. 
We hope that this methodology will be useful
more broadly.

\subsection{Analytic number theory}\label{analytic} Having discussed the representation theoretic ideas in the previous subsection, we now give a brief sketch
of
how analytic number theory can handle expressions like \eqref{alternative} that can be extracted from the triple product formula.  This is a precursor to the analysis in \S \ref{shifted}. 
 The trivial bound in the $m$-sum recovers convexity, and if we
 had square-root cancellation in the $m$-sum we would obtain
 Lindel\"of.  We now consider the 
 sum
\begin{equation}\label{av}
\sum_{2\pi T \leq r_h \leq 2\pi (T+H)} L(1/2,   f \otimes g \otimes h).
\end{equation}
An application of the  Kuznetsov and Voronoi formul\ae \, gives a dual shifted convolution problem that can be treated by a $\delta$-symbol method. 
With a final application of the spectral large sieve,
the sum \eqref{av}
can be shown to be $\ll (TH)^{1+\eps}$ for $H = T^{1/3}$,
which establishes the Weyl bound.  For convenience, we describe a
toy version of this argument, restricting each parameter
to the generic range. This is somewhat misleading because
smaller ranges of $m$ in \eqref{alternative} are punished by an
additional oscillation which complicates matters considerably,
but it nevertheless gives a flavour for
what is happening. Restricting \eqref{alternative} to $\nu = 1$ (for simplicity) and applying the Kuznetsov formula, we obtain an expression roughly of the  shape 
\begin{multline*}
\frac{H^{3/2}}{T^{5/2}} \sum_{m_1, m_2 \asymp T^2} \sum_{c \asymp T/H} \lambda_f(m_1)\lambda_f(m_2) S(m_1-1, m_2 - 1, c)\\ \times  e\left( \frac{2\sqrt{m_1m_2}}{c} - \frac{T^2 c}{  \sqrt{m_1m_2}}\right),
\end{multline*}
cf.\ \eqref{off1},  provided that $H \geq T^{1/3}$. (For smaller
$H$,
more terms in the exponential would be necessary.) 
The complicated exponential is reminiscent of the uniform asymptotic expansion of the $J$-Bessel function at imaginary index. The leading term of the Kuznetsov kernel equals the Voronoi kernel, a feature that is now crucially exploited: applying the Voronoi summation formula to $m_2$, the dual variable will be close to $m_1$ and a large portion of the oscillation disappears. We obtain roughly
$$\frac{H^{5/2}}{T^{5/2}} \sum_{c \asymp T/H} \sum_{h \asymp T^2/H^2} S(h-1, -1, c) \sum_{m \asymp T^2} \lambda_f(m) \lambda_f(m+h) e\Big(\frac{2Th^{1/2}}{  m^{1/2}} \Big),$$
cf.\ \eqref{newshift}. The inner sum is now a shifted
convolution problem with a moderately oscillatory factor of size
$T(h/m)^{1/2} \asymp T/H$. It can be spectrally decomposed by a
delta-symbol method (cf.\ \S\ref{sec56}).
Another application of the Voronoi and Kuznetsov formul\ae \, (cf.\ \S\ref{sec57} and \S\ref{sec58})   leads to a spectral sum of length $T/H$ having $(T/H)^2$ terms by Weyl's law. Note that this is the length of the $h$-sum, so the large sieve (which is itself an application of the Kuznetsov formula) can show its full power on the $h$-sum, leading to the desired final bound. As an aside, we see that this analysis employs the Kuznetsov formula three times, in various directions. 

We finally remark that a direct approach to \eqref{av}, by an approximate functional equation followed by the Kuznetsov formula, appears to be hopeless: we would obtain sums over $\lambda_{f}(n)\lambda_{g}(n)$ with $n \asymp T^4$ against oscillatory functions of (combined arithmetic and analytic) conductor of size $T^2$ (regardless of the choice of $H$), so that a ${\rm GL}(2) \times {\rm GL}(2)$ Voronoi summation formula would not reduce the length of summation. In other words, after applying the Kuznetsov formula, we run out of moves immediately.  

\subsection{Unfolding} \label{sec:unfolding}
Following \cite{Su1}, we now sketch a beautiful, but completely different approach to
the formula \eqref{alternative}, specific to the case of discrete series representations $\pi_1, \pi_2$. 
Suppose that $\pi_1, \pi_2$ are generated by holomorphic forms
$f, g$ and $\pi_3$ is generated by a Maa{\ss} form $h$  of
spectral parameter $t_h \geq 0$.  By Watson's formula, we have
\begin{equation*}
L(1/2,   f \otimes g \otimes h) \approx e^{\pi t_h} t_h^{2-2k} \Big|\int_{z\in {\rm SL}_2(\Bbb{Z}) \backslash \Bbb{H}}  \bar{f}(z) g(z)h(z) y^k d\mu(z)\Big|^2.
\end{equation*}
We write $g$ as a linear combination of Poincar\'e series, and without much loss of generality we assume that 
\begin{equation*}
 g(z) = P_n(z) = \sum_{\gamma = (\begin{smallmatrix} a & b\\ c & d\end{smallmatrix})\in \Gamma_{\infty} \backslash {\rm SL}_2(\Bbb{Z})} (cz + d)^{-k} e(n\gamma z)
 \end{equation*}
is the $n$-th Poincar\'e series. We insert the Fourier expansions and unfold. This gives a  $y$-integral
\begin{displaymath}
\begin{split}
  \int_0^{\infty}y^k&  \sqrt{y} {\cosh\Big(\frac{\pi t}{2}\Big)}K_{it}(2 \pi m y) e^{-2\pi(m+n)y} \frac{dy}{y^2} \\
&\approx \frac{{e^{-\pi t/2}} t^{2k-2} }{m^{k-1/2}} \exp\left( \pm 2i t\sqrt{\frac{ n}{m}}\right) \min\Big(\frac{m^{k/2 - 1/4}}{t^{k - 1/2}}, 1\Big)
\end{split}
\end{displaymath}
for large $t$ and fixed $n$. This integral was first analyzed by Good \cite[Section 4]{Go} in terms of hypergeometric $_2F_1$ functions; the analysis is long and difficult.  At least if the weight $k$ is fixed but relatively large (for small $k$, one needs to work a little harder), the typical range is $m \ll t^2$, and we obtain that $L(1/2,   f \otimes g \otimes h)$ is essentially the absolute-square of a linear combination of 
\begin{equation}\label{jut}
\frac{1}{t_h^{1/2}}\sum_{m \ll t_h^2} \frac{\lambda_h(m) \lambda_f(m+n)}{m^{1/4}}  \exp\left( \pm 2it_h \sqrt{\frac{ n}{m}}\right) 
\end{equation}
for a fixed number of $n$'s.   It is very interesting to note that  this resembles closely \eqref{alternative}. The previous argument is due to \cite{Su1}. 

The unfolding step is obviously not applicable in the set-up of Bernstein and Reznikov. Following an idea of Zagier \cite{Za}, the formal identity
\begin{equation}\label{zag}
\begin{split}
\int_{z\in {\rm SL}_2(\Bbb{Z})\backslash \Bbb{H}} f(z) d\mu(z) &= \frac{\pi}{3} \int_{z\in{\rm SL}_2(\Bbb{Z})\backslash \Bbb{H}} f(z) \underset{s=1}{\res} E(z, s) d\mu(z) \\
&= \frac{\pi}{3} \oint_s \int_{z\in\Gamma_{\infty}\backslash \Bbb{H}} f(z)  y^s d\mu(z) \frac{ds}{2\pi i} 
\end{split}
\end{equation}
for an ${\rm SL}_2(\Bbb{Z})$-invariant function $f$ can be used to mimic unfolding in more general situations (see \cite[Appendix A]{Ho} for a related idea). Applied to the triple product of three classical Maa{\ss} forms it yields a $y$-integral  involving three $K$-Bessel functions
\begin{equation*}
\int_{0}^{\infty} K_{it_1}(m_1 y) K_{it_2}(m_2 y) K_{it_3}(m_3 y) y^{s-1/2} dy
\end{equation*}
for $m_1+m_2+m_3 = 0$.  This can still be analyzed to some
extent, but the resulting highly oscillatory shifted convolution
problems become  untreatable with the required precision. This
is the reason why the attempt on the Maa{\ss} case in   \cite{Su2} remained
incomplete.
Nevertheless, the unfolding step \eqref{zag} is also present in
our argument  (cf.\ \S\ref{final}).
It acts as a hinge between the triple product identity and sums
over Fourier coefficients,
leading eventually to the description \eqref{alternative}
for the central $L$-value in terms of shifted convolution sums.

 \subsection{The analytic test vector problem}\label{sec13}
The art in using the triple product formula \eqref{period} to
bound $L$-functions consists of choosing appropriate test
vectors.  The traditional test vector problem asks for explicit
$v_1,v_2,v_3$ for which the local factor
$\mathcal{L}_\infty(v_1, v_2,v_3)$ is nonzero.  Spherical
vectors are often test vectors in this sense, but are usually
\emph{not} the best choice due to the exponential decay of the
local factor.  For analytic applications, it is useful to work
with \emph{analytic} test vectors: vectors for which the local
factor is not merely nonvanishing, but enjoys (informally
speaking) a reasonable quantitative lower bound.
Michel--Venkatesh \cite[\S3.6.1]{MV} gave a robust supply of
test vectors under local assumptions relevant for
Rankin--Selberg subconvexity.  We will revisit and extend their
approach to the triple product setting, removing all local
assumptions in a uniform way.

For our analysis of test vectors, we adopt the language of
\emph{analytic newvectors} \cite{JN}, which is well-suited for
keeping track of the essential invariance properties.
Analytic newvectors are approximate archimedean analogues of the
classical $p$-adic newvectors introduced by Casselman \cite{C}
(see also
\cite{JPSS}).
Let $K_0(p^N)$ denote the standard congruence
subgroup consisting of matrices in $\PGL_2(\Z_p)$ whose
lower-left entry is divisible by $p^N$.  Let $\xi$ be a generic
irreducible representation of $\PGL_2(\Q_p)$.  Denote by
$c(\xi)$ the conductor exponent of $\xi$, so that $p^{c(\xi)}$
is the usual arithmetic conductor of $\xi$.
The main result of local newvector theory
\cite{C}
is that there is a unique (up to scalar) nonzero vector
$v\in\xi$ such that $\xi(g)v=v$ for all $g\in K_0(p^{c(\xi)})$.
Such vectors $v$ are called \emph{newvectors}.

An archimedean analogue of the family
of congruence subgroups
$K_0(p^N)\subseteq \PGL_2(\Z_p)$
is
the family of subsets $K_0(X,\tau)$ that is defined by the image in $\PGL_2(\R)$ of the set
$$
\left\{\begin{pmatrix}
    a&b\\c&d
  \end{pmatrix}\in\mathrm{GL}_{2}(\R)
  : 
  \begin{aligned}
    &|a - 1| < \tau,
    \quad
    |b|<\tau, \\
    &|c|<\frac{\tau}{X},
    \quad
    |d-1|<\tau
  \end{aligned}
\right\}. $$
Here $X$ is a large
positive parameter,
thought of as tending off to infinity,
while $\tau \in (0,1)$ is taken small but fixed.
An archimedean analogue
of local newvector theory
is given by
\cite[Theorem 1]{JN}:
for each fixed $0 \leq \vartheta < 1/2$
and arbitrary $\delta>0$,
there is a $\tau>0$ so
that for every
generic irreducible unitary $\vartheta$-tempered (see \S\ref{sec:gener-bounds-whitt}) 
representation $\pi$ of $\PGL_2(\R)$,
there is a unit vector $v\in\pi$ such that 
$$\|\pi(g)v-v\|<\delta
\quad
\text{for all $g\in K_0(C(\pi),\tau)$.}$$
We refer to such
vectors as \emph{analytic newvectors}
(suppressing,
for terminological brevity,
the dependence of this notion upon the parameters $\delta$ and
$\tau$).
Such vectors $v$ may be constructed explicitly
as fixed bump functions
in the Kirillov model
(see
\cite[Theorem 7]{JN}).

Inspired by the construction
of \cite[\S3.6.1]{MV},
we approach
the analytic
test vector problem
for the local triple product
periods $L_\infty(v_1,v_2,v_3)$
by choosing $v_1$ and $v_3$ to be analytic newvectors.
The choice
of $v_2$ is simplest
to describe
when $\pi_2$
is a principal series
representation.
In that case, we
describe $v_2$
in the induced
model by
a function on the lower-triangular
subgroup
supported within $O(1/X)$
of the identity. 
 More generally, we make use of the fact that
$\pi_2$ may be embedded
in a (not necessarily unitary) principal series representation.

\subsection{Plan for the paper}
Having chosen test vectors $v_1, v_2, v_3$
as indicated above, we need to solve
three main problems:
\begin{itemize}
\item
  We need to compute (a lower bound for)
  the matrix coefficient integral
  $\mathcal{L}_{\infty}(v_1, v_2, v_3)$. This will be done in \S
  \ref{local}.
  The idea of the proof, as in \cite[\S3.7.2]{MV},
  is to write the matrix coefficient integral
  as the square of a Rankin--Selberg integral
  and then to estimate the latter
  by playing the support
  properties of $v_1$ in its Kirillov model
  and $v_2$ in its induced model
  against the invariance properties of $v_3$.
  One subtlety
  is that we have not assumed that any
  of our representations belongs to the principal series.
  For this reason, 
  the reduction to Rankin--Selberg integrals
  is achieved in general only after embedding $\pi_2$
  into a principal series representation
  and using the standard intertwining operator to normalize
  its inner product.

 \item We use (a refined version of)  the formula \eqref{zag} to
  compute the right hand side of \eqref{period}. This leads to
  an integral of three archimedean Whittaker functions that will
  be computed asymptotically in \S \ref{whittaker}.
  The proof involves several applications
  of the local functional equation and stationary phase
  analysis,
  but no input concerning special functions beyond Stirling's formula.
    This yields the expression \eqref{alternative}. In other words, choosing test vectors as above has the exact same effect as using Poincar\'e series in the holomorphic case and unfolding, cf.\ \eqref{jut}.  This is a rather remarkable feature.
\item We need to bound the shifted convolution problem
  \eqref{alternative}. This can be done by analytic number theory,
  roughly as indicated in \S\ref{analytic}. This is the content of
  \S \ref{shifted}. 
  It is here that we implement the ``hard analysis''
  required by our short spectral summation.
\end{itemize}

Theorems \ref{thm1} and \ref{thm2} are then an easy consequence
and will be derived in \S \ref{final}.

\begin{Acknowledgements}
  The second author thanks ETH Z\"urich and Max Planck Institute for Mathematics, where parts of this article were worked out, for providing a perfect research atmosphere.  This paper was revised while the third author was at the Institute for Advanced Study, supported by the National Science Foundation under Grant No. DMS-1926686.  All authors would like to thank the referees for a careful reading of the manuscript.
\end{Acknowledgements}

\section{Preliminaries}

\subsection{Basic notation}\label{sec:basic-notation} Throughout we work with the group $G := {\rm PGL}_2(\mathbb{R})$, and its subgroup $N$ of unipotent upper triangular matrices which are equipped with the usual Haar measures. 
We use the notation
 \begin{displaymath}
 \begin{split}
 & n(x) := \begin{pmatrix} 1 & x \\ 0 & 1 \end{pmatrix}, \, \, \,
 n'(x) := \begin{pmatrix} 1 & 0 \\ x & 1 \end{pmatrix}, \, \, \,
 a(y) := \begin{pmatrix} y & 0 \\ 0 & 1 \end{pmatrix}, \\
 & 
 k(\theta) := \begin{pmatrix} \cos \theta & - \sin \theta \\
   \sin \theta & \cos \theta \end{pmatrix}, \, \, \,
 w := \begin{pmatrix} 0 & -1 \\ 1 & 0 \end{pmatrix}.
  \end{split}
\end{displaymath}
We view $k(\theta)$ as a function of
$\theta \in \mathbb{R} / \pi \mathbb{Z}$.  For convenience, we
may assume $\theta$ taken in the interval $[-\pi/2, \pi/2]$.

We write  $d^\times y = dy/|y|$ for the Haar measure on $\mathbb{R}^\times$ and $dx$ for the Lebesgue measure on $\R$. We fix a $G$-invariant measure $dg$ on $N\backslash G$ given in Iwahori coordinates by
$$N\backslash G \ni g=a(y)n'(x),\quad dg=\frac{d^\times y}{|y|} dx,$$
and in Iwasawa coordinates  by
$$N\backslash G \ni g=a(y)k(\theta),\quad dg = \frac{d^\times
  y}{|y|}d\theta.$$
We equip $\GL_2(\mathbb{R})$ with the Haar measure
compatible with the chosen Haar measures on $\mathbb{R}^\times$
and $G$
via the short exact
sequence
$1 \rightarrow \mathbb{R}^\times \rightarrow \GL_2(\mathbb{R})
\rightarrow G \rightarrow 1$.

Let $\mathfrak{X}(\mathbb{R}^\times)$ denote the character group
of $\mathbb{R}^\times$.
Each $\chi \in \mathfrak{X}(\mathbb{R}^\times)$
is uniquely of the form
$\chi = |.|^s \sgn^{a}$ for some $s \in \mathbb{C}$
and $a \in \{0, 1\}$.
We set $\Re(\chi) := \Re(s)$,
$\Im(\chi):=\Im(s)$,
and $C(\chi) := (1 + |\Im s|)/(2\pi)$, cf.\ \eqref{crho}. 
The group $\mathfrak{X}(\mathbb{R}^\times)$ is a complex
manifold
with respect to the coordinate charts $\chi \mapsto s$.
For a function $f : \mathfrak{X}(\mathbb{R}^\times) \rightarrow
\mathbb{C}$
of sufficient decay and $\sigma \in \mathbb{R}$,
we define the contour integral
\begin{equation*}
  \int_{\Re(\chi) = \sigma}
  f(\chi) \, d \chi
  :=
  \frac{1}{2} \sum_{a \in \{0,1\} }
  \int_{\Re(s) = \sigma}
  f( |.|^s \sgn^{a}) \, \frac{ds}{2\pi i}.   
\end{equation*}
For a smooth function $f : \mathbb{R}^\times \rightarrow
\mathbb{C}$
of sufficient decay, we then have the Mellin inversion formula
\begin{equation}\label{eqn:mellin-inversion}
  f(y) = \int_{\Re(\chi) = \sigma}
  \chi(y) \left( \int_{t \in \mathbb{R}^\times}
  f(t) \chi^{-1}(t) \, \frac{d t}{|t|} \right) \, d \chi.
\end{equation}

\subsection{Local $\gamma$-factors,
  Stirling's formula,
  and the analytic conductor}
\label{localTate}
Let $\rho$ be a
finite-dimensional
representation of
the
Weil group $W_\mathbb{R}$.
Let
$\psi(x) := e^{2 \pi i x}$
be the standard additive character
of $\mathbb{R}$.
The local $\gamma$-factor
of $\rho$ 
is defined as usual
by
\begin{equation}\label{localgamma}
\gamma(s,\rho) :=
\gamma(s,\rho,\psi):=\epsilon(s,\rho,\psi)\frac{L(1-s,\tilde{\rho})}{L(s,\rho)},
\end{equation}
where $\epsilon$ and $L$ denote the
$\epsilon$-factor and  $L$-factor, respectively, a
description of which can be found in \cite[\S3.1]{Wat} for the
cases relevant in this paper.
We regard $\psi$ as fixed
once and for all,
and for this reason
we drop it from the notation.

The analytic conductor $C(\rho)$ has been defined in
various slightly different ways
(see for instance \cite[\S 2]{IS},  
\cite[\S5]{IK}
\cite[\S3.1.8]{MV}).
For many applications,
it is unimportant precisely which definition
is used:
what matters is just that
$C(\rho)$
controls the local $\gamma$-factor
in the sense that for small enough $s$,
and
under favorable conditions,
one has
at least the rough approximation
\begin{equation}\label{eqn:vague-gamma-vs-condcturo}
  \gamma(s,\rho) \approx C(\rho)^{1/2-s}.
\end{equation}
For the purposes of this paper,
it will be convenient
to normalize the definition
of $C(\rho)$
somewhat more precisely,
so that a correspondingly
more precise form of
\eqref{eqn:vague-gamma-vs-condcturo}
holds. While we could work with ad hoc definitions, it is useful
to present this in a slightly more general context. {The purpose
  of the following computation is
  to give a uniform asymptotic formula for the local gamma
  factors in the cases relevant for our application. 
  This is achieved in \eqref{maass} and \eqref{hol} below and used in \S  \ref{sec:estimates-f_1sharp}.}

 \subsubsection{Stirling's formula}
With the principal branch of the logarithm,
we have
$$\Gamma(z) = z^{-1/2} \Big(\frac{z}{e}\Big)^z
\Big(\mathcal{G}_N(z) + O_{N, \eps}(|z|^{-N})\Big),  \quad |\arg(z)| \leq \pi - \eps, \quad |z| \geq \eps$$
for some smooth function $\mathcal{G}_N$ satisfying 
$$|z|^j  \frac{d^j}{dz^j} \mathcal{G}_N(z) \ll_{j, N} 1$$
for all $N, j\in \Bbb{Z}_{\geq 0}$.
 
 \subsubsection{Characters of $\mathbb{R}^\times$}
\label{sec:local-gamma-characters-reals}
Set $\Gamma_\R(s):=\pi^{-s/2}\Gamma(s/2)$.  The basic archimedean local $\gamma$-factors
over $\mathbb{R}$
are given
(with respect
to the standard character $\psi$ of $\mathbb{R}$, as above)
by
\begin{equation*}
  \gamma(s,\sgn^a)
  =
  i^a
  \frac{\Gamma_\mathbb{R}(1 - s + a)}{
    \Gamma_\mathbb{R}(s + a)}
  \quad
  (s \in \mathbb{C}, a \in \{0,1\}),
\end{equation*}
corresponding
to the characters $|.|^s \sgn^a$
of $\mathbb{R}^\times$.
For $s = \sigma + i \tau$,
we define
$g_{\sigma,a}(\tau)$
by writing
\begin{equation*}
  \gamma(s,\sgn^a)
  =
  \left( \frac{|\tau|}{2 \pi e} \right)^{1/2-s}
  g_{\sigma,a}(\tau).
\end{equation*}
The factor $g_{\sigma,a}(\tau)$
is ``mild''
in the sense that
whenever $\sigma$ is restricted to a fixed interval and 
$\min_{n
  \in \Bbb{N}}|s-n| \geq \eps$
for some fixed $\eps > 0$, we have 
 \begin{equation}\label{eqn:stir1}
  \partial_\tau^j g_{\sigma,a}(\tau)
  \ll (1 + |\tau|)^{-j}
\end{equation}
for all fixed
$j \in \Bbb{Z}_{\geq 0}$;
this estimate follows from Stirling's formula for $|\tau| \geq
1$ and is otherwise trivial.

From this estimate, we derive a useful approximation
for the local variation of $\gamma(s,\sgn^a)$,
as follows.
For $w = u + i v$,
we may write
\begin{equation}\label{eqn:gamma-real-conductors}
  \gamma(s + w,\sgn^a)
  =
  \left( \frac{|v|}{2 \pi} \right)^{1/2-s-w}
  \exp(i \phi_v(\tau))
  g_{\sigma,u,a}(\tau,v),
\end{equation}
where
\begin{equation}\label{eqn:phiR}
  \phi_v(\tau) :=
  - (v + \tau) \log \left( \frac{|1 + \tau/v| }{e}
  \right) 
\end{equation}
and
$ g_{\sigma,u,a}(\tau,v)
=
(e^{-1} |1 + \tau/v|)^{1/2-\sigma-u}
g_{\sigma+u,a}(\tau+v). $
For $|v| \geq \max(1,2 |\tau|)$
and $\sigma, u \ll 1$,
we conclude from \eqref{eqn:stir1} that 
\begin{equation}\label{eqn:g-estimates}
  \partial_\tau^{j_1}
  \partial_v^{j_2}
  g_{\sigma,u,a}(\tau,v)
  \ll
  |v|^{-j_1-j_2}.
\end{equation}

\subsubsection{Characters of $\mathbb{C}^\times$}
We now record the analogous discussion over $\mathbb{C}$.
Set $\Gamma_{\Bbb{C}}(s) = 2(2\pi)^{-s} \Gamma(s)$.
The basic
local $\gamma$-factors
over $\mathbb{C}$
are given with
respect to the standard additive character
$\psi_\mathbb{C}(x) := e^{2 \pi i (x + \bar{x})}$
of $\mathbb{C}$
by
\begin{equation}\label{eqn:complex-gamma}
  \gamma_{\mathbb{C}}(s,\sgn_{\mathbb{C}}^a)
  =
  i^{a + 1}
  \frac{\Gamma_\mathbb{C}(1 - s + |a|/2)
  }{\Gamma_\mathbb{C}(s + |a|/2) }
  \quad
  (s \in \mathbb{C}, a \in \mathbb{Z}),
\end{equation}
corresponding
to
the character $|.|_{\mathbb{C}}^s \sgn_{\mathbb{C}}^a$
of $\mathbb{C}^\times$;
here
$|z|_{\mathbb{C}} := z \bar{z}$,
$\sgn_{\mathbb{C}}(z) := z / |z|$.
We extend the definition
\eqref{eqn:complex-gamma}
to arbitrary
$a \in \mathbb{R}$
by taking
$i^{a+1} := \exp(\tfrac{i \pi }{2} (a + 1))$. We suppose henceforth
that $a \geq 0$.
For $s = \sigma + i \tau$,
we define
$g_{\mathbb{C},\sigma}(\frac{a}{2} +i \tau)$
by writing
\begin{equation*}
  \gamma_{\mathbb{C}}(s,\sgn_{\mathbb{C}}^a)
  =
  \left( \frac{|a/2 + i \tau |}{2 \pi e} \right)^{1-2 s}
  \left( \frac{
      a/2 + i \tau 
    }{
      |      a/2 + i \tau 
      |
    }
  \right)^{-a}
  g_{\mathbb{C},\sigma}
  \left(\frac{a}{2} +i \tau \right).
\end{equation*}
Again, for $\sigma$ restricted to a fixed interval and $(s, a)$ a fixed distance away from poles of $\gamma_\mathbb{C}(s,\sgn_{\mathbb{C}}^a)$, Stirling's formula implies 
   \begin{equation}\label{eqn:stir2}
    \partial_{a}^{j_1}
    \partial_{\tau}^{j_2}
    g_{\mathbb{C},\sigma}(z)
    \ll_{\mathcal{D},j_1,j_2} (1 + |\tau| + |a|)^{-j_1-j_2}
  \end{equation}
  for $j_1, j_2 \geq 0$. 
  We write
 \begin{equation}\label{eqn:gamma-complex-conductors}
  \gamma_{\mathbb{C}}(s+w,\sgn^a_{\mathbb{C}})
  =
  \left( \frac{
     |
      a/2 + i v 
     |
    }{
      2 \pi 
    }
  \right)^{1 - 2 s - 2 w}
  \exp(i \phi_{\mathbb{C},v,a}(\tau))
  g_{\mathbb{C},\sigma,u}
  \left(\tau, \frac{a}{2} + i v \right),
\end{equation}
where
\begin{equation}\label{eqn:phiC}
  \phi_{\mathbb{C},v,a}(\tau) :=
  - 2 (v + \tau )
  \log \left(
    \frac{1}{e}
    \left\lvert 1 + \frac{i \tau }{a/2 + i v}
    \right\rvert
  \right)
  -
  a
  \arg \left( \frac{a}{2} + i (\tau + v) \right)
\end{equation}
and
 $ g_{\mathbb{C},\sigma,u}\left(\tau, \frac{a}{2} + i v\right)
  =
 (e^{-1} 
   |1 +  i \tau /(\frac{a}{2} + i v)|
    )^{1 - 2 \sigma - 2 u}
  g_{\mathbb{C}, \sigma+u}
(\frac{a}{2}+ i (v + \tau )  ).$
For $| a/2+ i v  | \geq \max(1,2 |\tau|)$
and $\sigma, u \ll 1$,
we obtain from \eqref{eqn:stir2} that 
\begin{equation}\label{eqn:g-C-estimates}
  \partial_\tau^{j_1}
  \partial_v^{j_2}
  \partial_a^{j_3}
  g_{\mathbb{C}, \sigma,u}
  \left(\tau, \frac{a}{2} + i v \right)
  \ll
  (|v| + |a|)^{-j_1-j_2-j_3}.
\end{equation}

\subsubsection{The general definition}
Any $n$-dimensional
representation of the Weil group
$W_\mathbb{R}$
is isomorphic to a direct sum
\begin{equation}\label{eqn:rho-sum}
  \rho
  =
  (\oplus_{j=1}^{n_1} |.|^{w_j} \sgn^{a_1})
  \oplus
  (\oplus_{j=1}^{n_2} |.|_{\mathbb{C}}^{z_j}
  \sgn_{\mathbb{C}}^{b_j})
\end{equation}
where $n = n_1 + 2 n_2$,
$w_j, z_j \in \mathbb{C}, a_j \in \{0,1\}$ and
$b_j \in \mathbb{Z}_{\geq 1}$.
Here we identify the indicated
characters of $\mathbb{C}^\times \cong W_\mathbb{C}$ with the
corresponding two-dimensional induced representations of
$W_{\mathbb{R}}$.
The local $\gamma$-factor of $\rho$
is now given by
\begin{equation*}
  \gamma(s,\rho)
  =
    \prod_{j=1}^{n_1}
    \gamma(s+w_j,\sgn^{a_j})
    \prod_{j=1}^{n_2}
    \gamma_{\mathbb{C}}(s+z_j,\sgn_{\mathbb{C}}^{b_j})
  .
\end{equation*}

Write $w_j = u_j + i v_j$ and $z_j = x_j + i y_j$.
We define
the analytic conductor
\begin{equation}\label{crho}
  C(\rho) :=
    \prod_{j=1}^{n_1}
    \frac{\max(1,|v_j|)}{2 \pi }
    \prod_{j=1}^{n_2}
    \frac{\max \left(1,
         b_j/2  + i y_j
          \right)^2
    }{(2 \pi)^2 }
\end{equation}
and, using \eqref{eqn:phiR} and \eqref{eqn:phiC}, the phase function
\begin{equation*}
  \phi_\rho(\tau) :=
  \sum_{j=1}^{n_1}
  \phi_{v_j}(\tau)
  +
  \sum_{j=1}^{n_2}
  \phi_{\mathbb{C},y_j,b_j}(\tau)
\end{equation*}
and the factors
\begin{equation*}
  g_{\sigma}(\tau,\rho)
  :=
    \prod_{j=1}^{n_1}
    g_{\sigma,u_j,a_j}(\tau,v_j)
    \prod_{j=1}^{n_2}
    g_{\mathbb{C},\sigma,x_j}
    \left(\tau,\frac{b_j}{2} + i y_j \right)
 ,
\end{equation*}
\begin{equation*}
  e_\rho
  :=
    \prod_{j=1}^{n_1}
    \left(
      \frac{|v_j|}{2 \pi }
    \right)^{-i v_j}
    \prod_{j=1}^{n_2}
    \left(
      \frac{
        b_j/2          + i y_j
      }{2 \pi }
    \right)^{- 2 i y_j}. 
\end{equation*}

By the \emph{dual} (resp.\ \emph{conjugate})
of $\rho$,
we mean the representation
obtained by negating (resp.\ by conjugating) the parameters
$w_j, z_j$ in \eqref{eqn:rho-sum}. We summarize the previous discussion in the following lemma. 
\begin{lemma}\label{lem:gamma-s-rho-via-conductors}
  Suppose that
  $\rho$ is isomorphic
  to its conjugate dual. 
  Then
  \begin{equation}\label{eqn:gamma-s-rho-via-conductors}
    \gamma(s,\rho)
    =
    e_\rho
    C(\rho)^{1/2-s}
    \exp(i \phi_\rho(\tau))
    g_{\sigma}(\tau,\rho).
  \end{equation}
  If moreover
  $\rho$ is self-dual
  (equivalently, self-conjugate),
  then
  \begin{equation*}
    e_\rho = 1.
  \end{equation*}
\end{lemma}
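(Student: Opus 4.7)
The plan is to combine the multiplicativity of the local $\gamma$-factor under \eqref{eqn:rho-sum},
\[\gamma(s,\rho) = \prod_{j=1}^{n_1}\gamma(s+w_j,\sgn^{a_j})\prod_{k=1}^{n_2}\gamma_{\mathbb{C}}(s+z_k,\sgn_{\mathbb{C}}^{b_k}),\]
with the scalar factorizations \eqref{eqn:gamma-real-conductors} and \eqref{eqn:gamma-complex-conductors} applied to each summand. Substituting and sorting the resulting product into (i) pure power factors, (ii) phases $\exp(i\phi_{v_j})$ and $\exp(i\phi_{\mathbb{C},y_k,b_k})$, and (iii) mild factors $g_{\sigma,u_j,a_j}$ and $g_{\mathbb{C},\sigma,x_k}$, groups (ii) and (iii) combine directly into $\exp(i\phi_\rho(\tau))$ and $g_\sigma(\tau,\rho)$ by definition. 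Thus the content of the proof is entirely in showing that the power factors equal $e_\rho\, C(\rho)^{1/2-s}$.

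Writing $w_j = u_j+iv_j$, $z_k = x_k+iy_k$ and expanding $(|v_j|/2\pi)^{1/2-s-w_j}=(|v_j|/2\pi)^{1/2-s-u_j}(|v_j|/2\pi)^{-iv_j}$ (and analogously in the complex block), the pure-imaginary exponents reproduce $e_\rho$, while the real exponents give $\prod_j(|v_j|/2\pi)^{1/2-s-u_j}\prod_k(|b_k/2+iy_k|/2\pi)^{1-2s-2x_k}$. At this point I would invoke the self-conjugate-dual hypothesis: the multiset of characters in \eqref{eqn:rho-sum} is preserved under $w\mapsto -\bar w$, so every $u_j\neq 0$ is paired with an opposite $-u_j$ at identical $|v_j|$ (and similarly $x_k$ with $-x_k$ at identical $|b_k/2+iy_k|$). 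These pairings make the $u_j$- and $x_k$-weighted logarithmic terms cancel, so the real-exponent factor collapses to $\prod_j(|v_j|/2\pi)^{1/2-s}\prod_k(|b_k/2+iy_k|/2\pi)^{1-2s}$, which matches $C(\rho)^{1/2-s}$ up to the truncation $\max(1,\cdot)$ in \eqref{crho}. For the self-dual second assertion, the involution strengthens to $w\mapsto -w$, so each $v_j$ is either zero or paired with $-v_j$ at identical $|v_j|$; the contribution of any such pair to $e_\rho$ is $(|v|/2\pi)^{-iv}(|v|/2\pi)^{+iv}=1$, and the same argument handles $(b_k,y_k)\leftrightarrow(b_k,-y_k)$, yielding $e_\rho = 1$.

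The main obstacle I anticipate is the careful bookkeeping of the $\max(1,\cdot)$ truncation in \eqref{crho} against the genuine $|v_j|$, $|b_k/2+iy_k|$ appearing in the individual decompositions, together with a parallel bookkeeping for the discrepancy between $b_k/2+iy_k$ and $|b_k/2+iy_k|$ in the definition of $e_\rho$. Any residual discrepancy is a product of finitely many bounded smooth factors whose logarithmic derivatives inherit the estimates \eqref{eqn:g-estimates}, \eqref{eqn:g-C-estimates}, and so may be absorbed into $g_\sigma(\tau,\rho)$ without disrupting the uniform mild-factor bounds; after this, \eqref{eqn:gamma-s-rho-via-conductors} is a direct formal consequence of Stirling.
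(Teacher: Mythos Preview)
Your proposal is correct and follows essentially the same route as the paper: multiply together the per-summand factorizations \eqref{eqn:gamma-real-conductors} and \eqref{eqn:gamma-complex-conductors}, then use the multiset symmetry $(w_j,a_j)\leftrightarrow(-\overline{w_j},a_j)$ (resp.\ $(w_j,a_j)\leftrightarrow(-w_j,a_j)$ in the self-dual case) to collapse the $u_j$- and $x_k$-weighted power factors (resp.\ to show $e_\rho=1$). Your observation about the $\max(1,\cdot)$ truncation and the $b_k/2+iy_k$ versus $|b_k/2+iy_k|$ discrepancy is a point the paper passes over silently; your proposed resolution---absorbing these bounded, smooth-in-$\tau$ factors into $g_\sigma(\tau,\rho)$---is the natural one and is consistent with how the lemma is actually used downstream (only in regimes where the relevant parameters are large).
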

\begin{proof}
  The content of our hypothesis
  is that we have the equalities of
  multisets
  \begin{equation}\label{eqn:multisets-1}
    \{
    (w_1,a_1),
    \dotsc,
    (w_{n_1},a_{n_1})
    \}
    =
    \{
    (-\overline{w_1},a_1),
    \dotsc,
    (-\overline{w_{n_1}},a_{n_1})
    \},
  \end{equation}
  \begin{equation}\label{eqn:multisets-2}
    \{
    (z_1,b_1),
    \dotsc,
    (z_{n_2},b_{n_2})
    \}
    =
    \{
    (-\overline{z_1},b_1),
    \dotsc,
    (-\overline{z_{n_2}},b_{n_2})
    \}.
  \end{equation}
  It follows that
  \[
    \prod_{j=1}^{n_1}
    \left(
      \frac{|v_j|}{2 \pi }
    \right)^{-u_j}
    =
    \prod_{j=1}^{n_2}
    \left(
      \frac{|b_j/2 + i
        y_j |}{2 \pi }
    \right)^{-x_j}
    =
    1.
  \]
  We deduce \eqref{eqn:gamma-s-rho-via-conductors} by
  multiplying together the identities
  \eqref{eqn:gamma-real-conductors} and
  \eqref{eqn:gamma-complex-conductors}.  Assuming moreover that
  $\rho$ is self-dual, we obtain the additional equalities of
  multisets as in \eqref{eqn:multisets-1} and
  \eqref{eqn:multisets-2}, but without the conjugations,
  which in turn give that $e_\rho = 1$.
\end{proof}

The primary hypothesis of Lemma
\ref{lem:gamma-s-rho-via-conductors} is satisfied if, for
instance, $\rho$ corresponds to a \emph{unitary} representation
$\pi$ of $\GL_n(\mathbb{R})$, while the full hypotheses are
satisfied if $\pi$ is self-dual.

\subsubsection{The examples of interest}\label{223}
We consider in this paper
cuspidal automorphic representations
$\pi$
for $\SL_2(\mathbb{Z})$.
Each such $\pi$ defines
a generic irreducible unitary representation
of $\PGL_2(\mathbb{R})$,
hence a two-dimensional
representation $\rho_\pi$
of $W_\mathbb{R}$.
We set
\begin{equation}\label{eqn:gammarep}
  \gamma(s,\pi) := \gamma(s,\rho_\pi),
\end{equation}
and similarly define
$C(\pi), \phi_\pi(\tau)$
and $g_\sigma(\tau,\pi)$.
The possibilities for $\rho_\pi$
are as follows:
\begin{enumerate}
\item $\pi$
  is a principal series representation
  $\pi = \pi(r,a)$
  obtained
  by normalized induction
  of the character
  $|.|^{i r} \sgn^a$
  for some $r \in \mathbb{R} \cup (-1/2,1/2)i$
  and $a \in \{0, 1\}$,
  in which  case
  $\rho_\pi =
  |.|^{i r} \sgn^a \oplus |.|^{-i r} \sgn^a$,
  or
\item $\pi$
  is a discrete series representation
  $\pi = \pi(k)$
  of lowest weight $k \in 2 \mathbb{Z}_{\geq 1}$,
  in which case
  $\rho_\pi = \sgn_{\mathbb{C}}^{k-1}$.
\end{enumerate}
We note that any such $\pi$ is self-dual,
hence any such $\rho$ is
both self-dual and self-conjugate;
this property is evident in each example.
Thus  for $s = \sigma + i \tau$,
we have
\begin{equation}\label{eqn:gamma-s-pi-approximation}
  \gamma(s,\pi)
  =
  C(\pi)^{1/2 - s}
  \exp(i \phi_\pi(\tau))
  g_\sigma(\tau,\pi)
\end{equation}
where
\begin{itemize}
\item for $\pi = \pi(r,a)$,
  we have
\begin{equation}\label{maass}
\begin{split}
&  C(\pi)
  =
  \left( \frac{\max(1,|\Re(r)|)}{2 \pi } \right)^2,\\
&  \phi_\pi(\tau)
  =
  - (r + \tau )
  \log\Big(\Big|1 + \frac{\tau}{r}\Big|\Big)
  -
  (-r + \tau )
  \log\Big(\Big|1 -\frac{ \tau}{r}\Big|\Big)
  +
  2 \tau,
\end{split}
\end{equation}
\item
  for $\pi = \pi(k)$,
\begin{equation}\label{hol}
\begin{split}
&    C(\pi) = 
    \left( \frac{\max(1,(k-1)/2)}{2 \pi } \right)^2,\\
&    \phi_\pi(\tau) =
    - 2 \tau
    \log
    \left(
      \frac{1}{e}
      \left\lvert
        1 + \frac{i \tau }{ (k-1)/2}
      \right\rvert
    \right)
    -
    (k-1)
    \arg \left( \frac{k-1}{2} + i \tau \right)
\end{split}
\end{equation}
\end{itemize}
 and
$g_\sigma(\tau,\pi)$
varies mildly
in the sense given
by the estimates
\eqref{eqn:g-estimates}
and
\eqref{eqn:g-C-estimates},
namely
\begin{itemize}
\item
  for $|r| \geq \max(1, 2 |\tau|)$
  and $\sigma \ll 1$,
  \begin{equation*}
  \partial_\tau^{j_1}
  \partial_{r}^{j_2}
  g_\sigma(\tau,\pi(r,a))
  \ll
  |r|^{-j_1 - j_2},
\end{equation*}
\item for
  $|k-1| \geq 4 |\tau|$
  and $\sigma \ll 1$,
  \begin{equation}\label{eqn:g-sigma-tau-pi-k-estimate}
    \partial_\tau^{j_1}
    \partial_k^{j_2}
    g_\sigma(\tau,\pi(k))
    \ll
    |k|^{-j_1 - j_2}.
  \end{equation}
\end{itemize}
We note that,
while $\pi(k)$
is not defined as a representation
for non-integral $k$,
each of the factors
$\gamma(s,\pi(k))$,
$C(\pi(k))$,
$\phi_{\pi(k)}$ and
hence also $g_\sigma(\tau,\pi(k))$
is defined
for any $k \in \mathbb{R}_{\geq 1}$;
see after \eqref{eqn:complex-gamma}.
For this reason,
it makes sense to differentiate
with respect to $k$
in \eqref{eqn:g-sigma-tau-pi-k-estimate}.

 On one occasion, we will apply Lemma
\ref{lem:gamma-s-rho-via-conductors} to a Rankin--Selberg
convolution $\pi_1 \otimes \pi_2 \otimes \chi$ of a pair of
generic irreducible unitary representations of
$\PGL_2(\mathbb{R})$, twisted further by a character
$\chi$ of $\GL_1(\mathbb{R})$.
Writing $\chi = \chi_0 |.|^{\Re(\chi)}$
with $\chi_0$ unitary,
we have
\begin{equation}\label{eqn:lemma-1-RS}
  \gamma(1/2,\pi_1 \otimes \pi_2 \otimes \chi)
  = \gamma(1/2 + \Re(\chi),\pi_1 \otimes \pi_2 \otimes \chi_0)
  \ll
  C(\pi_1 \otimes \pi_2
  \otimes \chi)^{-\Re(\chi)},
\end{equation}
where in the second step
we
invoke Lemma \ref{lem:gamma-s-rho-via-conductors}
and the accompanying Stirling estimates,
using the unitarity
of $\pi_1, \pi_2$ and $\chi_0$
to verify its hypotheses.

\subsection{General bounds for Whittaker functions}\label{sec:gener-bounds-whitt}

Let $\pi$ be a generic irreducible unitary representation of $G := \PGL_2(\mathbb{R})$.  We recall that ``generic'' means that there is a $G$-equivariant embedding, necessarily unique,
\begin{equation*}
  \pi\hookrightarrow \{W:G\to\C\text{ smooth }\mid W(n(x)g)=e(x)W(g)\},
\end{equation*}
\begin{equation*}
  v \mapsto W_v,
\end{equation*}
where $G$ acts on the space on the right hand side by right translation.  The image of $\pi$ under such an embedding is called the \emph{Whittaker model} of $\pi$ with respect to $\psi$.   An invariant inner product on $\pi$ is given by
\begin{equation}\label{defn-inner-prod}
\langle v_1,v_2\rangle_{\pi}:=\int_{\R^\times}W_{v_1}(a(y))\overline{W_{v_2}(a(y))}\, d^\times y.
\end{equation}
When we speak below of $\pi$ being realized in its Whittaker model,  we mean that we identify $\pi$ with its image under such an embedding, with inner product normalized as in \eqref{defn-inner-prod}.

Fix $\vartheta \in [0,1/2)$.  We say that $\pi$  is \emph{$\vartheta$-tempered} if it lies in the discrete series or if, writing $\pi$ as a Langlands quotient of an isobaric sum $\sigma_1 \otimes |\det|^{s_1} \boxplus \sigma_2 \otimes |\det^{s_2}$, we have that each $|\Re(s_i)| \leq \vartheta$.  Then $\pi$ is $0$-tempered in the above sense if and only if it is tempered in the usual sense, i.e., its matrix coefficients lie in $L^{2+\eps}(G)$ for each $\eps > 0$.

In what follows, we work exclusively with smooth vectors in such representations.  Thus ``let $v \in \pi$'' is shorthand for ``let $v$ be a smooth vector in $\pi$.''

We denote by $\mathcal{S}_d$ the Sobolev norm on $\pi$ defined in \cite[(2.6)]{MV}.  It takes finite values on smooth vectors.

\begin{lemma}\label{lem:unram-W-bounds}
  Let $\pi$ be a $\vartheta$-tempered generic irreducible unitary representation of $G$, realized in its Whittaker model.  For each $W \in \pi$ and all $y \in \mathbb{R}^\times$ and $z \in \mathbb{R}$ with $|z| \leq 1000$, we have
  \begin{equation}\label{eqn:unram-W-bound-near-w}
    (y \partial_y)^{j_2}
    \partial^{j_1}_z
    W(a(y) w n(z))
    \ll \mathcal{S}_{d}(W)
    \min(|y|^{1/2-\vartheta}, |y|^{-N}).
  \end{equation}
  and
  \begin{equation}\label{eqn:unram-W-bound-near-1}
    (y \partial_y)^{j_2}
    \partial^{j_1}_z
    W(a(y) n'(z))
    \ll \mathcal{S}_{d}(W)
    \min(|y|^{1/2-\vartheta}, |y|^{-N})
  \end{equation}
  for all $j_1, j_2,N \in \mathbb{Z}_{\geq 0}$, where
  $d \in \mathbb{Z}_{\geq 0}$
  and the implied constants depend at most upon $j_1,j_2,N$.
\end{lemma}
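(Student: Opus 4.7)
The plan is to reduce both statements to a single Kirillov-type bound via the Iwasawa decomposition. Since the cases $a(y) n'(z)$ and $a(y) w n(z)$ are structurally identical, I focus on the former. Computing in $\PGL_2(\R)$, one checks that
\[
n'(z) = n\!\left(\tfrac{z}{1+z^2}\right) a\!\left(\tfrac{1}{1+z^2}\right) k(\arctan z),
\]
and the Whittaker transformation law $W(n(x) g) = e(x) W(g)$ then yields
\[
W(a(y) n'(z)) = e\!\left(\tfrac{yz}{1+z^2}\right)\, \xi_{\pi(k(\arctan z)) W}\!\left(\tfrac{y}{1+z^2}\right),
\]
where I write $\xi_v(y) := W_v(a(y))$ for the Kirillov realization. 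For $|z|\leq 1000$ we have $y/(1+z^2) \asymp y$, and $k(\arctan z)$ ranges over a compact subset of a fixed maximal compact $K \subset G$.

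The essential input is the Kirillov bound: for every smooth vector $v$ in a $\vartheta$-tempered generic irreducible unitary representation of $G$ and every $N\geq 0$,
\[
|\xi_v(y)| \ll \mathcal{S}_d(v)\,\min(|y|^{1/2-\vartheta},|y|^{-N}),
\]
with $d = d(N)$. The small-$|y|$ bound $|y|^{1/2-\vartheta}$ encodes the standard asymptotic description of the Kirillov model at the origin for $\vartheta$-tempered representations (provable via Mellin analysis of $\int_{\R^\times}\xi_v(y)\chi^{-1}(y)\,d^\times y$, whose poles are controlled by $\vartheta$). The large-$|y|$ rapid decay follows by iterating the identity $2\pi i\, y\,\xi_v(y) = \xi_{\pi(E) v}(y)$, obtained by differentiating $W_v(a(y) n(t)) = e(ty)\xi_v(y)$ at $t=0$ for $E\in\mathfrak{g}$ the generator of $N$, together with the Sobolev majorization $\|\pi(E)^N v\| \ll \mathcal{S}_{d+N}(v)$.

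Applying the Kirillov bound to $v = \pi(k(\arctan z))W$, and noting that $\mathcal{S}_d(\pi(k) W) \ll \mathcal{S}_d(W)$ uniformly for $k\in K$ compact, immediately handles the case $j_1 = j_2 = 0$. For derivatives, differentiating the explicit formula for $W(a(y)n'(z))$ by the chain rule expresses $(y\partial_y)^{j_2}\partial_z^{j_1} W(a(y) n'(z))$ as a finite linear combination of terms of the form
\[
P(y,z)\,\xi_{\pi(D)\pi(k(\arctan z)) W}\!\left(\tfrac{y}{1+z^2}\right),
\]
where $D \in U(\mathfrak{g})$ has degree at most $j_1 + j_2$ and $P(y,z)$ is a polynomial in $y$ with coefficients $O(1)$ uniformly for $|z|\leq 1000$. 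Here I use the basic identities $y\partial_y\xi_v(y) = \tfrac{1}{2}\xi_{\pi(H) v}(y)$ and $\partial_\theta W_v(g k(\theta)) = W_{\pi(K) v}(g k(\theta))$, where $H$ and $K$ generate the diagonal torus and $\SO(2)$, to convert the geometric derivatives into enveloping-algebra actions. The polynomial factor $P(y,z)$ is harmless: it is bounded for $|y|\leq 1$ and is absorbed by enlarging $N$ in the rapid-decay regime $|y|\geq 1$. Applying the Kirillov bound to each summand, with $\mathcal{S}_d(\pi(D) v) \ll \mathcal{S}_{d + j_1 + j_2}(v)$, yields the claimed estimate. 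The main obstacle in this approach is the Kirillov bound itself, which is a standard but nontrivial input from the analytic structure of Whittaker models for $\vartheta$-tempered representations; once granted, the rest of the argument is essentially a bookkeeping exercise with the Iwasawa decomposition and the chain rule.
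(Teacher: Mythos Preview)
Your proposal is correct and matches the paper's approach: the paper's proof is simply a citation to \cite[\S2.4.1, \S3.2.3]{MV}, and your argument---reduce to the diagonal via the Iwasawa decomposition, invoke the standard $\vartheta$-tempered Kirillov bound, and absorb derivatives into the Sobolev norm via the enveloping algebra action---is exactly what that reference provides.
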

\begin{proof}
  \cite[\S2.4.1 and \S3.2.3]{MV}. 
\end{proof}

\subsection{Smooth weight functions}\label{weight}
Let $\mathcal{X}$ be a large parameter which will be clear from the
context. We adopt the convention
that $\eps$ denotes a fixed (i.e., independent of $\mathcal{X}$) positive quantity,
whose precise meaning may change from line to line.
As usual,
the notation $A \ll B$
means that $|A| \leq C |B|$ for some fixed $C$;
we introduce subscripts as in $A \ll_j B$
to signify that $C$ may depend upon $j$.
We use the notation
$A \asymp B$ to denote that $A$ and $B$ are nonzero
real numbers for which $A/B$ lies in some fixed compact subset
of $(0,\infty)$;
we then have $A \ll B \ll A$. 
We introduce the abbreviation
$$A \preccurlyeq B \Longleftrightarrow A \ll_{\eps} \mathcal{X}^{\eps} B.$$
We call an expression \emph{negligible} if it is
$\ll_N \mathcal{X}^{-N}$ for any $N > 0$.
We call a smooth  function $V : \Bbb{R}^n \rightarrow \Bbb{C}$ 
  \emph{flat} if  
\begin{equation}\label{deriv}
x_1^{j_1} \cdots x_n^{j_n} V^{(j_1, \ldots, j_n)}(x_1, \ldots, x_n) \preccurlyeq_{\textbf{j}}1
\end{equation}
for all $\textbf{j} \in \Bbb{Z}_{\geq 0}^n$. Clearly if $V$ is
flat, then so is $ \exp(iV)$. If in addition $V$ has fixed
compact support in  $(0, \infty)^n$, we call it \emph{nice}. 
We generally let $V$ denote a  nice function
in one or more variables, \emph{not necessarily the same at every
  occurrence}. In practice, $V$ may depend on some additional parameters having certain prescribed sizes; it will always be clear from the context with respect to which variables ``flatness'' is applied (in which   case  all implied constants are uniform in these parameters).

For   a nice function  $V$, we may
separate variables in $V(x_1, \ldots, x_n)$ by first inserting a redundant function $V(x_1) \cdots V(x_n)$ that is 1 on the support of $V$ and then  applying  Mellin inversion
\begin{displaymath}
\begin{split}
&V(x_1, \ldots, x_n) = V(x_1, \ldots, x_n)V(x_1) \cdots V(x_n) \\
&= \int_{\Re(s_1)=0} \cdots \int_{\Re(s_n)=0} \widehat{V}(s_1, \ldots, s_n) \Big( V(x_1) \cdots V(x_n) x_1^{-s_1} \cdots x_n^{-s_n}\Big) \frac{ds_1 \cdots ds_n}{(2\pi i)^n}.
\end{split}
\end{displaymath}
Here we can truncate the vertical integrals at height $|\Im s|
\preccurlyeq 1$ at the cost of a negligible error.
We will often separate variables in this way without
explicit mention.

\subsection{Integration by parts and stationary phase}\label{sec25}
We quote the following lemmas from \cite[Section 8]{BKY} and its
extension in \cite[Section 3]{KPY}.
\begin{lemma}\label{lem1}
 Let $Y \geq 1$, $X, P, U, S > 0$, 
and suppose that $w$ 
 is a smooth function with support on $[\alpha, \beta]$, satisfying
\begin{equation*}
w^{(j)}(t) \ll_j X U^{-j}.
\end{equation*}
Suppose $h$ 
  is a smooth function on $[\alpha, \beta]$ such that
\begin{equation*}
 |h'(t)| \gg S
\end{equation*}
for some $S > 0$, and
\begin{equation*}
h^{(j)}(t) \ll_j Y P^{-j}, \qquad \text{for } j=2, 3, \dots.
\end{equation*}
Then 
\begin{equation*}
 \int_{t\in \Bbb{R}} w(t) e^{i h(t)} dt \ll_A (\beta - \alpha) X [(PS/\sqrt{Y})^{-A} + (SU)^{-A}].
\end{equation*}
\end{lemma}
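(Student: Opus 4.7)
The plan is to prove the lemma by the classical non-stationary-phase integration by parts argument. Since $|h'(t)| \gg S > 0$ on $\operatorname{supp}(w) \subseteq [\alpha, \beta]$, one may rewrite $e^{ih(t)} = (ih'(t))^{-1}(e^{ih(t)})'$, integrate by parts, and discard the boundary contributions since $w$ and all of its derivatives vanish at $\alpha$ and $\beta$. Iterating this procedure $A$ times yields
\begin{equation*}
\int_{\R} w(t)\, e^{ih(t)}\, dt = \int_{\R} L^A(w)(t)\, e^{ih(t)}\, dt, \qquad L(f) := -\frac{d}{dt}\!\left(\frac{f}{ih'}\right),
\end{equation*}
for every $A \in \Z_{\geq 0}$, which serves as the starting point of the analysis.

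The next step is a combinatorial expansion of $L^A(w)$. A straightforward induction on $A$ shows that $L^A(w)$ is a finite sum, with coefficients depending only on $A$, of monomials of the form
\begin{equation*}
\frac{w^{(a_0)}(t)\, \prod_{j=1}^{k} h^{(b_j+1)}(t)}{(h'(t))^{A+k}}, \qquad a_0, k \geq 0,\ b_j \geq 1,\ a_0 + \sum_{j=1}^{k} b_j = A,
\end{equation*}
where $a_0$ counts the derivatives that have fallen on $w$ and $k$ counts the number of times a previously generated factor $1/h'$ has been differentiated, each such differentiation producing one additional factor $h^{(b_j+1)}$ with $b_j \geq 1$. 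Substituting the three hypotheses $|w^{(a_0)}| \ll X U^{-a_0}$, $|h^{(b_j+1)}| \ll Y P^{-(b_j+1)}$ and $|h'| \gg S$, and using the constraint $\sum_j b_j = A - a_0$ to collect powers, each such monomial is bounded by
\begin{equation*}
X\, (SU)^{-a_0}\, (SP)^{-(A-a_0)}\, \Bigl( \frac{Y}{SP} \Bigr)^{k}.
\end{equation*}

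The final step is to optimize over $0 \leq a_0 \leq A$ and $0 \leq k \leq A - a_0$. One may assume throughout that $Y \leq S^2 P^2$, for otherwise $(PS/\sqrt{Y})^{-A} \geq 1$ and the trivial estimate $\bigl|\int w\, e^{ih}\bigr| \leq (\beta-\alpha)X$ is already stronger than the claim. In this regime $\sqrt{Y}/(SP) \leq 1$, so raising to a higher power is harmless, and a short case distinction on the signs of $\log(U/P)$ and $\log(Y/(SP))$ shows that the monomial bound above is uniformly $\ll X\bigl[(SU)^{-A} + (\sqrt{Y}/(SP))^{A}\bigr] = X\bigl[(SU)^{-A} + (PS/\sqrt{Y})^{-A}\bigr]$; the worst corner of the optimization produces $X(\sqrt{Y}/(SP))^{2A}$, which is absorbed because $\sqrt{Y}/(SP) \leq 1$. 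Integrating against $|e^{ih}|=1$ on the support of $w$ contributes the length $(\beta-\alpha)$ and yields the stated bound. The main obstacle is simply the combinatorial bookkeeping in the expansion of $L^A$, which is routine but tedious; this is the reason the authors quote the lemma from \cite[\S 8]{BKY} rather than reprove it.
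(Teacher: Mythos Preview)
Your proof is correct and follows the standard non-stationary-phase argument. The paper itself does not prove this lemma; it simply quotes it from \cite[\S 8]{BKY}, and your argument is precisely the one given there (repeated integration by parts via the operator $L(f)=-\bigl(f/(ih')\bigr)'$, combinatorial expansion of $L^A(w)$, and a case analysis to dominate each monomial by $X\bigl[(SU)^{-A}+(PS/\sqrt{Y})^{-A}\bigr]$). Two minor points worth tightening: you should also note that one may assume $SU\geq 1$ (else the bound is again trivial), and the ``short case distinction'' silently uses the hypothesis $Y\geq 1$ to ensure $1/(SP)\leq Y/(SP)$, which is what forces $(SP)^{-A}\leq (PS/\sqrt{Y})^{-A}$ in the regime $Y\leq SP$.
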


\begin{lemma}\label{lem2}
  Let $\mathcal{X}$ be a large parameter.
  Let $V$ be a flat function in
  the sense of
\S\ref{weight} with  support in $\times_{j=1}^d [c_{1j},
c_{2j}]$ for some
fixed
intervals $[c_{1j}, c_{2j}] \subseteq \Bbb{R}$ not containing
0. Let $X_1, \ldots, X_d > 0$, $Y \geq \mathcal{X}^{\eps}$.  Write
$\mathscr{S} =   \times_{j=1}^d [c_{1j}X_j, c_{2j}X_j] \subseteq
\Bbb{R}^d$. Let $\phi : \Bbb{R}^d \rightarrow \Bbb{R}$ be a
smooth function satisfying
the derivative upper bounds\footnote{The main result in \cite[Section 3]{KPY} states this with $\ll$ instead of $\preccurlyeq$, but our conclusion on $W$ is insensitive to $Q^{\eps}$-powers.}
\begin{equation*}
\phi^{(j_1, \ldots, j_d)}(t_1; t_2, \ldots, t_d) \preccurlyeq Y \prod_{i=1}^d X_i^{-j_i}
\end{equation*}
for $\textbf{j} \in \Bbb{N}_0^d$ and $(t_1, \ldots t_d) \in
\mathscr{S}$, as well as
the following second derivative lower bound in the first variable:
\begin{equation*}
\phi^{(2, 0, \ldots, 0)}(t_1; t_2, \ldots, t_d) \gg Y X_1^{-2}.
\end{equation*}
Suppose that there exists $t^{\ast}  = t^{\ast}(t_2, \ldots, t_d)$ such that $\phi^{(1, 0, \ldots, 0)}(t^{\ast}, t_2, \ldots, t_d) = 0$. Then for any $N > 0$ we have 
\begin{multline*}
\int_{\Bbb{R}} V\Big(\frac{t_1}{X_1}, \ldots, \frac{t_d}{X_d}\Big) e^{i\phi(t_1, \ldots, t_d)} dt_1 \\ = \frac{X_1}{Y^{1/2}} e^{i\phi(t^{\ast}, t_2, \ldots, t_d)} W\Big(\frac{t_2}{X_2}, \ldots, \frac{t_d}{X_d}\Big)  + O_N(\mathcal{X}^{-N})
\end{multline*}
 for a flat function $W = W_N$  with support in $\times_{j=2}^d [c_{1j}, c_{2j}]$. 
 \end{lemma}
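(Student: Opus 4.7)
The plan is to reduce the statement to a one-dimensional stationary phase analysis in $t_1$ with $t_2, \ldots, t_d$ treated as fixed parameters, and then verify the flatness of the resulting amplitude by differentiating through the implicit function theorem.

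First, I would rescale $t_1 = X_1 s$ so that the integral becomes
\[
  X_1 \int_{\mathbb{R}} V(s, t_2/X_2, \ldots, t_d/X_d)\, e^{i\Phi(s;\mathbf{t})}\, ds,
\]
where $\Phi(s;\mathbf{t}) := \phi(X_1 s, t_2, \ldots, t_d)$ satisfies $\partial_s^j \Phi \preccurlyeq Y$ for $j \geq 0$ and $\partial_s^2 \Phi \gg Y$, with a unique critical point at $s^\# := t^\ast/X_1$ inside the $s$-support $[c_{11}, c_{21}]$. I would then split this $s$-integral using a smooth cutoff $\chi$ equal to $1$ on $\{|s - s^\#| < Y^{-1/2}\mathcal{X}^\delta\}$ and supported in twice that set, for some small fixed $\delta > 0$. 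Off the support of $\chi$, the lower bound on $\partial_s^2 \Phi$ together with $\partial_s \Phi(s^\#) = 0$ and the mean value theorem gives $|\partial_s \Phi(s)| \gg Y^{1/2}\mathcal{X}^\delta$; applying Lemma~\ref{lem1} with $X \asymp 1$, $U \asymp Y^{-1/2}\mathcal{X}^\delta$, $S \asymp Y^{1/2}\mathcal{X}^\delta$, $P \asymp 1$ then yields a negligible contribution, since both $PS/\sqrt{Y}$ and $SU$ exceed $\mathcal{X}^\delta$.

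On the support of $\chi$ I would carry out standard stationary phase (either via the Morse lemma or via direct Taylor expansion of $\Phi$ around $s^\#$ and Fresnel evaluation of the resulting Gaussian integrals), producing an asymptotic expansion of the form
\[
  \frac{X_1}{Y^{1/2}}\, e^{i\phi(t^\ast,t_2,\ldots,t_d)} \sum_{n=0}^{N'} Y^{-n} F_n(t_2/X_2, \ldots, t_d/X_d) + O(\mathcal{X}^{-N}),
\]
where each $F_n$ is a polynomial in the normalized Taylor coefficients $\partial_s^k \Phi(s^\#;\mathbf{t})/\partial_s^2 \Phi(s^\#;\mathbf{t})$ for $k \geq 3$, multiplied by $s$-derivatives of $V$ evaluated at $s^\#$. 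Since $Y \geq \mathcal{X}^\eps$, truncating at $N' \asymp N/\eps$ suffices to achieve the negligible error above.

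The main obstacle, and the bulk of the technical work, is verifying that the resulting function $W := \sum_n Y^{-n} F_n$ is flat in its remaining variables. The key tool is the implicit function theorem applied to $\partial_s \Phi(s^\#;\mathbf{t}) = 0$, which gives
\[
  X_j\, \partial_{t_j} s^\# = -\frac{X_j\, \partial_{t_j}\partial_s \Phi(s^\#;\mathbf{t})}{\partial_s^2 \Phi(s^\#;\mathbf{t})} \preccurlyeq \frac{Y}{Y} = 1.
\]
Induction combined with the chain rule then propagates the $\preccurlyeq 1$ bound to arbitrary mixed operators $\prod_{j=2}^{d} (X_j \partial_{t_j})^{k_j}$ applied to $s^\#$, to each $\partial_s^k \Phi(s^\#;\mathbf{t})$, and hence to every $F_n$; the compact support of $W$ in $\times_{j \geq 2}[c_{1j},c_{2j}]$ is inherited from that of $V$ by continuity of $\mathbf{t} \mapsto s^\#$. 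Collecting these estimates yields the flatness bound \eqref{deriv} for $W$ and completes the plan.
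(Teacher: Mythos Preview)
The paper does not prove this lemma at all: it is quoted verbatim from \cite[Section 8]{BKY} and its extension \cite[Section 3]{KPY}, as stated at the top of \S\ref{sec25}. Your sketch is a correct outline of the argument one finds in those references (rescaling, localization near the critical point, non-stationary estimate via Lemma~\ref{lem1} away from it, asymptotic expansion near it, and flatness of the amplitude via the implicit function theorem on $t^\ast$), so there is nothing to compare beyond noting that you have supplied what the paper simply imports.
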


\section{The local triple product factor}\label{local}

 Let $\pi_i$ for $i=1,2,3$ be generic irreducible unitary
representations of $G$ such that
\begin{itemize}
\item $\pi_1$ and $\pi_2$ are $\vartheta$-tempered, while
\item  $\pi_3$ is tempered.
\end{itemize}
We regard $\pi_1$ and $\pi_2$ as fixed.
We write $Q = C(\pi_3)$ for the conductor of $\pi_3$ and think of $Q$ as a large parameter.  The aim of this section is to obtain a lower bound for  the local triple product integral $\mathcal{L}_{\infty}(v_1, v_2, v_3)$ in \eqref{period} for a certain choice of vectors $v_j \in \pi_j$. The choice will be made at the beginning of \S\ref{choice} and the result will be stated in Theorem \ref{lower-bound} at the end of this section. 

Let $\psi$ denote the additive character of $N$ given by $n(x)\mapsto e(x)$. We realize $\pi_1$  (resp.\ $\pi_3$) in its Whittaker model with respect to $\psi$ (resp.\ $\bar{\psi}$), with inner products normalized as in  \eqref{defn-inner-prod}.

In this section we abbreviate $\chi_s := \chi\otimes|.|^s$
for   $\chi\in\mathfrak{X}(\R^\times)$ and $s\in\C$.

\subsection{Embedding via intertwiners}\label{sec:embedding}
Let $\chi\in\mathfrak{X}(\R^\times)$.
Let $\Ic(\chi)$ denote
the unitarily normalized induction of $\chi$ from the
standard upper-triangular Borel
subgroup in $G$,
consisting of smooth $f : G \rightarrow \mathbb{C}$ satisfying
$f(n(x) a(y) g) = |y|^{1/2} \chi(y) f(g)$.
Let $M(\chi)$
denote
the standard intertwining operator from the principal series
$\Ic(\chi)$ to $\Ic(\chi^{-1})$, defined by
the integral
\begin{equation}\label{defn-intertwiner}
    f\mapsto \int_{x\in\R} f(wn(x).)dx
\end{equation}
for $\Re(\chi)>0$ and then meromorphically continued to all of
$\mathfrak{X}(\R^\times)$.
 
Let $\chi=|.|^{1/2+k}$ for $k\in\Z_{\geq 0}$ and consider a $K$-type basis $\{f_{2l}\}_{l\in \Z}$ on $\Ic(\chi)$. From the computation of \cite[Proposition 2.6.3]{Bum} we see that $M(\chi)f_l=0$ for $|l|\geq 1+k$.  Thus $M(\chi)$ has a unique infinite dimensional kernel which is isomorphic
to the discrete series $D_k$ of weight $k$. 
 We normalize $M(\chi)$ as
\begin{equation}\label{normalized-intertwiner}
    M^*(\chi):=\gamma(0,\chi^2)M(\chi),
\end{equation}
where $\gamma$ is the local Tate gamma factor as in \S
\ref{localTate}.
 Then $M^*(\chi)$ is non-zero for all $\Re(\chi)>0$ and is
meromorphic for all $\chi$. 
 In other words $D_k$ can be embedded into the principal series representation $\Ic(\chi)$ with $\chi=|.|^{k+1/2}$ via the normalized intertwining operator $M^*(\chi)$. A similar embedding can be done for the complementary series representation as well, see \cite[\S2.6]{Bum}.

Let $\chi$
  with $\Re(\chi)\ge 0$ not be a pole of $M^*(\chi)$. From now on we will only consider $\chi$ for which either $\Re(\chi)=0$ or $\Im(\chi)=0$. Note that if $\Ic(\chi)$ is unitary then $\chi$ satisfies this property.
We can define a $G$-invariant sesquilinear pairing on $\Ic(\chi)$ by \begin{equation*}
    (f_1,f_2)_0:=
    \begin{cases}
    \int_{x\in \R}f_1(n'(x))\overline{f_2(n'(x))}dx,&\text{ if }\Re(\chi)=0,\\
    \int_{x\in \R}f_1(n'(x))\overline{M^*(\chi)f_2(n'(x))}dx,&\text{ if } \Im(\chi)=0.
    \end{cases}
\end{equation*}
for $f_1,f_2\in \Ic(\chi)$.
 
 There is a principal series representation
$\pi_2^p = \Ic(\chi)$,
with $\chi$ of nonnegative
real part,
into which $\pi_2$ embeds.
Explicitly:
\begin{itemize}
\item If $\pi_2$
is a tempered principal series $\Ic(\chi_0)$, i.e.\  if $\chi_0$ is
unitary,
then we choose $\chi=\chi_0$.
\item If $\pi_2$ is the weight $k$ discrete series $D_k$, then
  we choose $\chi=|.|^{1/2+k}$. 
\item If $\pi_2$ is the complementary series attached to
  $0<\sigma<1/2$ then we choose
  $\chi=|.|^\sigma$. 
\end{itemize}
In each case, we have a $G$-invariant embedding $\pi_2\hookrightarrow \pi_2^p=\Ic(\chi)$ and $c(\chi)\in\C^\times$ such that
$$\langle v_1, v_2\rangle_{\pi_2} = c(\chi)(f_{v_1},f_{v_2})_0=:(f_{v_1},f_{v_2}),$$
where $f_{v_i}$ are the images of $v_i$ under the above
embedding and $\langle,\rangle_{\pi_2}$ is as defined in \eqref{defn-inner-prod}. We refer to  \cite[\S2.6]{Bum} for details.

\subsection{Local Rankin--Selberg zeta integral} 
 Let $W_1 \in \pi_1$ and $W_3 \in \pi_3$ and 
 let $f_2\in \Ic(\chi)$ for some
$\chi\in\mathfrak{X}(\R^\times)$.
We may parametrize $f_2$
in terms of a Schwartz function, as follows. Let $e_2:=(0,1)\in\R^2$ and $\Phi\in \mathcal{S}(\R^2)$ a Schwartz function. We define
$$f_2(g):=\int_{t\in \R^\times}\Phi(e_2tg)\chi_{1/2}(\det(tg))d^\times t.$$
The above integral converges absolutely for $\Re(\chi)>-1/2$ and
continues meromorphically
to all $\chi\in\mathfrak{X}(\R^\times)$.

The local $\GL(2)\times\GL(2)$ Rankin--Selberg zeta integral of $\pi_1$ and $\pi_3$ is defined by
\begin{align*}
    \Psi(W_1,f_2,W_3)&:=\int_{g\in N\backslash G} W_1(g)f_2(g)W_3(g)dg\\
    &=\int_{g\in N\backslash\GL_2(\R)}W_1(g)W_3(g)\Phi(e_2g)\chi_{1/2}(\det(g))dg,
\end{align*}
for $\Re(\chi)$ sufficiently large and in general by meromorphic continuation.
The $\GL(2)\times \GL(2)$ local functional
equation (see \cite[Theorem 3.2]{Cog})
asserts, using the notation \eqref{eqn:gammarep} and \eqref{localgamma}, 
\begin{multline}\label{G-G-lfe}
    \gamma(1/2,\pi_1\otimes\pi_3\otimes\chi)\int_{g\in N\backslash\GL_2(\R)}W_1(g)W_3(g)\Phi(e_2g)\chi_{1/2}(\det(g))dg\\=\int_{g\in N\backslash\GL_2(\R)}\tilde{W}_1(g)\tilde{W}_3(g)\hat{\Phi}(e_2g)\chi^{-1}_{1/2}(\det(g))dg,
\end{multline}
where $\tilde{W}_i\in\tilde{\pi}_i$ is the contragredient of
$W_i$ defined by $\tilde{W}_i(g)=W(wg^{-\top})$ 
and $\hat{\Phi}$ is
the Fourier transform of $\Phi$ defined by
$$\hat{\Phi}(y):=\int_{x\in\R^2}\Phi(x)e(y^{\top}x)dx.$$
 For $\chi$
a fixed distance away from a pole
or zero of $\gamma(1/2,\pi_1\otimes\pi_3\otimes\chi)$, we have
\begin{equation}\label{bound-gamma-factor}
  \begin{split}
  \gamma(1/2,\pi_1\otimes\pi_3\otimes\chi)^{-1}
  &\asymp
  \gamma(1/2,\tilde{\pi}_1\otimes\tilde{\pi}_3\otimes\chi^{-1})
  \\
  &\ll _{\Re(\chi)}
  C(\pi_1\otimes\pi_3 \otimes \chi)^{\Re(\chi)} \ll_{\pi_1,\chi}C(\pi_3)^{2\Re(\chi)}
  ,\\
\end{split}
\end{equation}
when $\chi$ is fixed with $\Re(\chi)\ge 0$.
The first estimate above follows from the definition of the
gamma factor.
The second estimate follows
from
\eqref{eqn:lemma-1-RS}. 
 The third estimate follows from repeated application of \cite[Lemma A.2]{HB}.
 
We record a variant of the local functional equation:
\begin{lemma}\label{normalized-lfe}
We have
\begin{equation*}
    \Psi(W_1,f_2,W_3)\gamma(1/2,\pi_1\otimes\pi_3\otimes\chi)=\Psi(W_1,M^*(\chi)f_2,W_3),
\end{equation*}
where $M^*(\chi)$ is as in \eqref{normalized-intertwiner}.
\end{lemma}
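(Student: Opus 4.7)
The plan is to reduce the identity to the $\GL(2)\times\GL(2)$ local functional equation \eqref{G-G-lfe} by making the intertwining operator $M^*(\chi)$ explicit in the Schwartz function parameterization of $\Ic(\chi)$. It suffices, by density and meromorphic continuation, to treat $f_2 = f_\Phi$ for a Schwartz function $\Phi\in\mathcal{S}(\R^2)$, so that unfolding the $t$-integral in the definition of $f_\Phi$ immediately identifies
\[
\Psi(W_1,f_\Phi,W_3) = \int_{N\backslash\GL_2(\R)} W_1(g)W_3(g)\Phi(e_2 g)\chi_{1/2}(\det g)\,dg.
\]

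Next, I would compute $M(\chi)f_\Phi$ in closed form. Substituting $w n(x)g$ into the definition of $f_\Phi$, using $e_2\,t\,wn(x) = t(1,x)$ and $\det(twn(x)g) = -t^2\det g$, and then making the change of variables $(t,x)\mapsto(y,z)=(t,tx)$, one lands on a Tate-type integral
\[
  M(\chi) f_\Phi(g) = \chi(-1)\chi_{1/2}(\det g)\int_{\R^\times}\Xi_g(y)\,\chi^2(y)\,d^\times y,\qquad \Xi_g(y):=\int_{\R}\Phi((y,z)g)\,dz.
\]
Applying Tate's local functional equation on $\GL_1$ with character $\chi^2$ at $s=0$ converts the inner integral into $\gamma(0,\chi^2)^{-1}$ times a zeta integral for $\hat{\Xi}_g$. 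A short calculation identifies $\hat{\Xi}_g(\eta)=|\det g|^{-1}\hat{\Phi}(e_2\eta\,wg^{-\top})$, and combining these steps gives, after multiplying by $\gamma(0,\chi^2)$ to pass from $M(\chi)$ to $M^*(\chi)$, an identity of the schematic form
\[
  M^*(\chi) f_\Phi(g) \;=\; f_{\hat\Phi,\chi^{-1}}\!\bigl(w g^{-\top}\bigr),
\]
where $f_{\hat\Phi,\chi^{-1}}\in\Ic(\chi^{-1})$ is obtained from $\hat\Phi$ by the same recipe as $f_\Phi$ with $\chi$ replaced by $\chi^{-1}$. The crucial point is that the Tate gamma factor $\gamma(0,\chi^2)$ responsible for the normalization of $M^*(\chi)$ is exactly what cancels the corresponding factor from the one-dimensional functional equation, leaving a clean answer.

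Substituting this expression into $\Psi(W_1, M^*(\chi)f_\Phi, W_3)$ and then performing the measure-preserving involution $g\mapsto w g^{-\top}$ on $N\backslash\GL_2(\R)$, I would use the definition $\tilde W_i(g)=W_i(wg^{-\top})$ to rewrite the result as
\[
  \int_{N\backslash\GL_2(\R)} \tilde W_1(g)\tilde W_3(g)\hat\Phi(e_2 g)\chi^{-1}_{1/2}(\det g)\,dg,
\]
which is exactly the right-hand side of \eqref{G-G-lfe}. Invoking \eqref{G-G-lfe} with the prefactor $\gamma(1/2,\pi_1\otimes\pi_3\otimes\chi)$ and comparing with the unfolded expression for $\Psi(W_1, f_\Phi, W_3)$ then yields the claim.

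The main obstacle is the careful bookkeeping of the signs (such as $\chi(-1)$), the Jacobians and half-integral normalizations $\chi_{1/2}$ introduced by the simultaneous change of variables on $\R^2$ and on $N\backslash\GL_2(\R)$, and the verification that $M^*(\chi)f_\Phi$ transforms correctly under $\Ic(\chi^{-1})$ after the Weyl twist $wg^{-\top}$. Once these bookkeeping issues are handled, the Tate gamma factor $\gamma(0,\chi^2)$ is absorbed into $M^*$, and the only surviving constant of proportionality is precisely $\gamma(1/2,\pi_1\otimes\pi_3\otimes\chi)$, as required. All identities are to be read between meromorphic continuations from the region where the defining integrals converge absolutely.
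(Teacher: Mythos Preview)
Your proposal is correct and follows essentially the same route as the paper: compute $M^*(\chi)f_\Phi$ explicitly by unwinding the intertwining integral and applying Tate's local functional equation in the $t$-variable, identify the result as $\hat\Phi$ evaluated at the Weyl-twisted point $e_2 t w g^{-\top}$, and then match against the right-hand side of the $\GL(2)\times\GL(2)$ functional equation \eqref{G-G-lfe} via the involution $g\mapsto w g^{-\top}$. The only difference is cosmetic ordering---the paper performs the change of variables on \eqref{G-G-lfe} and then recognizes $M^*(\chi)f_2$, whereas you go in the other direction---and your acknowledged bookkeeping issues (the $\chi(-1)$, the $|\det g|^{-1}$ Jacobian, the $\chi_{1/2}$ shifts) are exactly the ones the paper handles by citing \cite[p.\ 225]{GJ}.
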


 \begin{proof}
Let $\Re(\chi)$ be sufficiently large. By expanding the
definition \eqref{defn-intertwiner}
of the intertwining operator, we see that
$$M(\chi)f_2(g)=\chi_{1/2}(\det(g))\int_{x\in\R}\int_{t\in \R^\times}\Phi((t,x)g)\chi^2(t)d^\times t \, dx.$$
We use local Tate functional equation to evaluate the above as
$$\gamma(0,\chi^2)^{-1}\chi(\det(g))|\det(g)|^{-1/2}\int_{t\in \R^\times}\hat{\Phi}((t,0)g^{-\top})\chi^{-2}(t)|t|d^\times
t$$
(compare with \cite[p.\ 225]{GJ}).
Recalling \eqref{normalized-intertwiner},
we may thus write
$$M^*(\chi)f_2(g)=\int_{t\in \R^{\times}}\hat{\Phi}(e_2twg^{-\top})\chi^{-1}_{1/2}(\det(tg^{-\top}))d^\times t.$$
We use the definition of $\tilde{W}_i$ and change variables
$g\mapsto wg^{-\top}$ in the right hand side of
the local functional equation \eqref{G-G-lfe} to write
\begin{multline*}
    \gamma(1/2,\pi_1\otimes\pi_3\otimes\chi)\int_{g\in N\backslash\GL_2(\R)}W_1(g)W_3(g)\Phi(e_2g)\chi_{1/2}(\det(g))dg\\=\int_{g\in N\backslash\GL_2(\R)}W_1(g)W_3(g)\hat{\Phi}(e_2wg^{-\top})\chi_{-1/2}(\det(g))dg.
  \end{multline*}
Folding the above integrals over $\R^\times$, the identity follows for $\Re(\chi)$ large. We conclude the proof by meromorphic continuation of the zeta integrals and the intertwiner.
\end{proof}

\begin{lemma}\label{extended-ichino}    
  Let $\pi_1$ and $\pi_2$ be $\vartheta$-tempered with $\vartheta<1/4$ and $\pi_3$ be tempered. Let $\pi_i\ni v_i\mapsto W_i$ for $i=1,3$ be realized in their respective Whittaker models equipped with the inner products as defined in \eqref{defn-inner-prod}. Also let $v_2\mapsto f_2$ under $\pi_2\hookrightarrow\pi_2^p=\Ic(\chi)$ as described in \S\ref{sec:embedding}. Then
$$\int_{g\in G}\prod_{i=1}^3\langle \pi_i(g)v_i,v_i\rangle_{\pi_i} dg = c(\chi)\Psi(W_1,f_2,W_3)\overline{\Psi(W_1,\tilde{f}_2,W_3)},$$
where 
$$\tilde{f}_2=
\begin{cases}
f_2, &\text{ if $\pi_2^p$ is a tempered principal series,}\\
M^*(\chi)f_2, &\text{ otherwise}.
\end{cases}$$
\end{lemma}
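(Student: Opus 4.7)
The plan is to prove the identity by a local Rankin--Selberg unfolding, in the spirit of \cite[\S3.7.2]{MV}, extending their argument to cover the cases where $\pi_2$ is a discrete series or complementary series representation. The first move is to use the embedding $\pi_2\hookrightarrow\pi_2^p=\Ic(\chi)$ together with the inner product compatibility $\langle v,v'\rangle_{\pi_2}=c(\chi)(f_v,f_{v'})_0$ from \S\ref{sec:embedding} to reduce the problem to showing
\begin{equation*}
\int_G \langle\pi_1(g)W_1,W_1\rangle_{\pi_1}\cdot(\pi_2^p(g)f_2,f_2)_0\cdot\langle\pi_3(g)W_3,W_3\rangle_{\pi_3}\,dg=\Psi(W_1,f_2,W_3)\overline{\Psi(W_1,\tilde{f}_2,W_3)}.
\end{equation*}

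Next, I would expand each pairing explicitly. Using \eqref{defn-inner-prod}, each Whittaker inner product becomes the one-dimensional integral $\int_{\R^\times}W_i(a(y)g)\overline{W_i(a(y))}\,d^\times y$, while $(\pi_2^p(g)f_2,f_2)_0=\int_\R f_2(n'(x)g)\overline{\tilde{f}_2(n'(x))}\,dx$ by the definitions in \S\ref{sec:embedding}, with $\tilde{f}_2=f_2$ in the tempered principal series case and $\tilde{f}_2=M^*(\chi)f_2$ otherwise. After substituting and swapping the order of integration by Fubini, the key object is the inner $G$-integral
\begin{equation*}
J(y_1,x,y_3)=\int_G W_1(a(y_1)g)f_2(n'(x)g)W_3(a(y_3)g)\,dg.
\end{equation*}
Decomposing $g$ according to the Bruhat decomposition $G=NAN'$, the $N$-equivariance of $W_1$ and $W_3$ with opposite characters $\psi, \bar\psi$, together with the Bruhat factorization of $n'(x)n(u)$ and the quasi-invariance of $f_2\in\Ic(\chi)$, causes the $N$-component of the integral to collapse. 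After the $N$-integration and appropriate changes of variable on $N\backslash G$, the integrand factorizes so that the $g$-integral produces a standard Rankin--Selberg zeta integral $\Psi(W_1,f_2,W_3)$ on $N\backslash G$. Reassembling the remaining $y_1,x,y_3$ integrations against the conjugate factors $\overline{W_1(a(y_1))\tilde{f}_2(n'(x))W_3(a(y_3))}$ then reconstructs the second factor $\overline{\Psi(W_1,\tilde{f}_2,W_3)}$.

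The main obstacle is justifying convergence and the Fubini swap, especially when $\pi_2$ is not tempered. In the tempered case for $\pi_2$, the hypothesis $\vartheta<1/4$ ensures that the product of three matrix coefficients is absolutely integrable on $G$, and all manipulations are justified directly. In the non-tempered cases (discrete or complementary series), the pairing $(\cdot,\cdot)_0$ is defined via the meromorphic intertwiner $M^*(\chi)$, and both sides of the identity are meromorphic functions of $\chi$. My plan is therefore to first establish the identity for $\chi$ of sufficiently large real part, where the intermediate integrals converge absolutely, and then extend to the range of interest by meromorphic continuation. The normalized local functional equation of Lemma \ref{normalized-lfe}, which matches $\Psi(W_1,M^*(\chi)f_2,W_3)$ against $\gamma(1/2,\pi_1\otimes\pi_3\otimes\chi)\Psi(W_1,f_2,W_3)$, provides exactly the algebraic compatibility needed for the two sides to agree along the meromorphic continuation, yielding the identity with $\tilde f_2=M^*(\chi)f_2$ as stated.
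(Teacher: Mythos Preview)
Your initial reduction via $\langle\cdot,\cdot\rangle_{\pi_2}=c(\chi)(\cdot,\cdot)_0$ is correct, but the core unfolding step has a genuine gap. The inner integral $J(y_1,x,y_3)=\int_G W_1(a(y_1)g)f_2(n'(x)g)W_3(a(y_3)g)\,dg$ that you isolate after the Fubini swap is not well-defined, and the claimed collapse of its $N$-component does not occur. Writing $g=n(u)a(t)n'(z)$, the Bruhat relation $n'(x)n(u)=n(u/(1+xu))\,a((1+xu)^{-2})\,n'(x/(1+xu))$ shows that the $f_2$ factor carries a nontrivial $u$-dependence through both a modulus factor $|1+xu|^{-1-2\Re(\chi)}$ and a shift of its $n'$-argument; it does not contribute a pure phase, so the $u$-integral is \emph{not} of the form $\int e((y_1-y_3)u)\,du$ and does not reduce to a delta constraint. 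Even for $x=0$, where the phase is pure, one obtains $\int_{\R}e((y_1-y_3)u)\,du$ multiplying the (generically nonzero) Rankin--Selberg integrand on $N\backslash G$, which is divergent. Thus the four-fold integral over $(y_1,x,y_3,g)$ is not absolutely convergent, and the swap placing the $G$-integral innermost is unjustified.

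The paper's route is essentially different. It sets $\xi_1:=W_1f_2$ and $\xi_2:=W_1\tilde{f}_2$ (each transforming by $\psi$ under left $N$-translation) and first establishes, by an \emph{absolutely convergent} computation in Iwahori coordinates $h=a(y)n'(x)$ on $N\backslash G$, the identity
\[
\int_{N\backslash G}\xi_1(hg)\overline{\xi_2(h)}\,dh \;=\; \langle\pi_1(g)W_1,W_1\rangle\,(\pi_2^p(g)f_2,\tilde{f}_2)_0.
\]
This reduces matters to the double integral $\int_{G}\int_{N\backslash G}\xi_1(hg)\overline{\xi_2(h)}\,\langle\pi_3(g)W_3,W_3\rangle\,dh\,dg$, which is only \emph{conditionally} convergent; its evaluation as $\int_{N\backslash G}\xi_1 W_3\cdot\overline{\int_{N\backslash G}\xi_2 W_3}=\Psi(W_1,f_2,W_3)\overline{\Psi(W_1,\tilde{f}_2,W_3)}$ is a regularized unfolding for which the paper refers to \cite[proof of (2.29)]{Nel}. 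No meromorphic continuation in $\chi$ is invoked: the distinction between the tempered and non-tempered cases for $\pi_2$ enters only through the definition of the pairing $(\cdot,\cdot)_0$, i.e., through whether $\tilde{f}_2$ equals $f_2$ or $M^*(\chi)f_2$.
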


From \cite[\S2.5.1]{MV} we have the bound of the matrix coefficients
$$\langle \pi_i(g)v_i,v_i\rangle \ll_{\pi_i}
\Xi(g)^{1-2\vartheta}
\text{ for } i=1,2,
\quad
\langle \pi_3(g)v_3,v_3\rangle \ll_{\pi_3}
\Xi(g).$$
Here $\Xi$ is the Harish-Chandra $\Xi$-function,
which satisfies $\int_{g\in G}\Xi(g)^{2+\epsilon}dg<\infty$.
Thus from the assumption that $\vartheta<1/4$ we
see that the local triple product integral is absolutely convergent.

\begin{proof}
The proof is essentially given in  \cite[2.14.3 Lemma]{Nel}, but
in an analogous metaplectic setting. 
 We modify the relevant part of the proof. 
Note that the left hand side of the equation in the lemma is 
$$\int_{g\in G}\langle \pi_1(g) W_1,W_1\rangle (\pi_2^p(g) f_2,f_2)\langle \pi_3(g)W_3,W_3\rangle dg.$$
We define 
$$\xi_1:=W_1 f_2,\quad \xi_2:= W_1 \tilde{f}_2,$$
and note that 
$$\xi_i(ng)=\psi(n)\xi_i(g),\quad n\in N, g\in G.$$
Using Iwahori coordinates  $g=a(y)n'(x) \in N\backslash G$ and the transformation of $f_2$ under the Borel subgroup we compute the absolutely convergent integral  
\begin{equation*}
\begin{split}
   & \int_{h\in N\backslash G}\xi_1(hg)\overline{\xi_2(h)}\,dh\\
    &=\int_{x\in \R}\pi_2^p(g)f_2(n'(x))\overline{\tilde{f}_2(n'(x))}\int_{y\in \R^\times}\pi_1(g)W_1(a(y)n'(x))\overline{W_1(a(y)n'(x))}d^\times y \, dx.
    \end{split}
\end{equation*}
The inner integral evaluates to $\langle \pi_1(g)W_1,W_1\rangle$ and consequently, we have
$$\int_{h\in N\backslash G}\xi_1(hg)\overline{\xi_2(h)}\,dh = \langle \pi_1(g)W_1,W_1\rangle (\pi_2^p(g)f_2,\tilde{f}_2)_0.$$
Hence, the left hand side of the equation in the lemma equals  
$$c(\chi)\int_{g\in G} \int_{h\in N\backslash G}\xi_1(hg)\overline{\xi_2(h)}\langle \pi_3(g)W_3,W_3\rangle \,dh\,dg.$$
The above double integral is only conditionally convergent. We proceed exactly as in the proof of the identity \cite[(2.29)]{Nel} to evaluate the above integral as
$$c(\chi)\int_{h\in N\backslash G}\xi_1(h)W_3(h) \,dh \int_{h\in N\backslash G}\overline{\xi_2(h)W_3(h)}\, d h.$$
The proof is now complete.
\end{proof}

\subsection{Choice of the vectors}\label{choice}
We
choose $f_2\in \pi_2^p$ as before
$$f_2(g):=\int_{t\in\R^\times}\Phi(e_2tg)\chi_{1/2}(\det(tg))d^\times t,$$
where $\Phi$ is a smooth non-negative bump function on $\R^2$ sufficiently concentrated around the point $e_2=(0,1)$ in terms of $\pi_1$ and $\pi_2$ only. 
Such a vector has a non-zero preimage $v_2'\in\pi_2$. 
 We choose $$v_2:=a(Q)v_2'$$
where, as we recall, $Q = C(\pi_3)$ is the conductor of $\pi_3$ as in \S\ref{localTate}.   We choose
$v_i\in\pi_i$ for $i=1,3$ such that $v_i$ are analytic
newvectors, in the sense of \S\ref{sec13}, i.e.\ $v_i$ in their
Kirillov models (with conjugate additive characters of $N$) are
given by a fixed bump functions
in $C^\infty_c(\R^\times)$ sufficiently concentrated around
$1$. We denote by $W_i$ the images of $v_i$ in their Whittaker models for $i=1,3$.

We note that
$$\int_{x\in\R}f_2(n'(x))dx\asymp 1,\quad \int_{y\in\R^\times}W_1(a(y))W_3(a(y))\chi_{-1/2}(y)d^\times y \asymp 1.$$ 
We normalize $v_1,v_2',v_3$ so that both of the above integrals are $1$.

\begin{lemma}\label{main-lemma-lower-bound}
Let $\pi_1$ be $\vartheta$-tempered  with $\vartheta<1/2$ and $\pi_3$ be tempered. Let $\chi$ with $\Re(\chi)\ge 0$ 
and $f_2\in \Ic(\chi)$ be as chosen above. Then for $C(\pi_3)= Q$ sufficiently large we have 
$$\Psi(W_1,f_2(.a(Q)),W_3)\gg_{\pi_1,\pi_2} Q^{-1/2+\Re(\chi)}.$$ 
\end{lemma}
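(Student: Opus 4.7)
The plan is to evaluate the Rankin--Selberg integral directly in Iwahori coordinates, exploiting the explicit form of $f_2$ and the localization properties of the analytic newvectors $v_1, v_3$. First, I would parametrize $g \in N\backslash G$ as $g = a(y) n'(x)$; since $n'(x) a(Q) = a(Q) n'(xQ)$, we have $g \, a(Q) = a(yQ) n'(xQ)$. Plugging this into the Tate-type definition of $f_2$, using $e_2 \cdot t \cdot a(yQ) n'(xQ) = (txQ, t)$ and $\det(t \cdot a(yQ) n'(xQ)) = t^2 yQ$, a short calculation yields
\begin{equation*}
  f_2(a(yQ) n'(xQ)) = \chi_{1/2}(yQ) \, U(xQ), \qquad U(s) := \int_{\R^\times} \Phi(ts, t) \chi(t)^2 |t| \, d^\times t,
\end{equation*}
where $U$ is a fixed smooth function (depending only on $\pi_1, \pi_2$), compactly supported in a small neighborhood of $0$, with $U(0) \asymp 1$ by the concentration of $\Phi$ around $(0,1)$.

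Using $\chi_{1/2}(y)/|y| = \chi_{-1/2}(y)$ and substituting $s = xQ$ gives
\begin{equation*}
  \Psi(W_1, f_2(\cdot a(Q)), W_3) = \frac{\chi_{1/2}(Q)}{Q} \int_{\R} U(s) \, \mathcal{F}(s/Q) \, ds,
\end{equation*}
where $\mathcal{F}(z) := \int_{\R^\times} W_1(a(y) n'(z)) W_3(a(y) n'(z)) \chi_{-1/2}(y) \, d^\times y$. The normalization in \S\ref{choice} gives $\mathcal{F}(0) \asymp 1$, and the prefactor satisfies $|\chi_{1/2}(Q)|/Q = Q^{\Re(\chi) - 1/2}$, so the desired lower bound reduces to showing that $\mathcal{F}(s/Q) \to \mathcal{F}(0)$ uniformly for $s$ in the support of $U$ as $Q \to \infty$.

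For this uniform approximation I would invoke the defining properties of analytic newvectors: since $\pi_1$ is fixed, smoothness of the orbit map yields $\|\pi_1(n'(z))v_1 - v_1\| \ll |z|$ for small $|z|$; and by \cite[Theorem 1]{JN}, $\|\pi_3(n'(z))v_3 - v_3\| < \delta$ for all $|z| \leq \tau/Q$, with $\delta$ made arbitrarily small by choice of $\tau$. Writing
\begin{equation*}
  \mathcal{F}(z) - \mathcal{F}(0) = \int (W_1^{(z)} - W_1) W_3^{(z)} \chi_{-1/2} \, d^\times y + \int W_1 (W_3^{(z)} - W_3) \chi_{-1/2} \, d^\times y
\end{equation*}
(with $W_i^{(z)} := \pi_i(n'(z)) W_i$) and applying Cauchy--Schwarz to each term reduces matters to bounding the $L^2$-differences $\|\pi_i(n'(z))v_i - v_i\|$ against weighted $L^2$ norms of Whittaker functions.

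The main technical obstacle is uniform (in $Q$) control of the weighted norm $\int |W_3^{(z)}(a(y))|^2 |y|^{2\Re(\chi) - 1} \, d^\times y$, since a direct application of Lemma \ref{lem:unram-W-bounds} introduces the Sobolev norm $\mathcal{S}_d(v_3)$, which is of size $Q^{d/2}$ for an analytic newvector of conductor $Q$ and would be too lossy. I would handle this by exploiting the explicit bump-function realization of the analytic newvector in the Kirillov model \cite[Theorem 7]{JN}: on the essential support $y \asymp 1$, which is independent of $Q$, the weight $|y|^{2\Re(\chi) - 1}$ is $O(1)$ and the integral is $O(1)$; the tails where $y$ is not $\asymp 1$ are handled by the rapid-decay estimate of Lemma \ref{lem:unram-W-bounds}, whose Sobolev-norm loss is more than compensated by the power savings. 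Combining these estimates yields $|\mathcal{F}(s/Q) - \mathcal{F}(0)| = o(1)$ uniformly on the support of $U$, which, together with $\mathcal{F}(0) \asymp 1$ and the prefactor $Q^{\Re(\chi) - 1/2}$, gives the claimed lower bound.
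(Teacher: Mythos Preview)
Your setup and overall strategy match the paper's exactly: pass to Iwahori coordinates, rescale the lower-unipotent variable by $Q$, identify the main term $\mathcal{F}(0)\asymp 1$, and show the remainder is $o(1)$. The difference lies in how you control the remainder, and here your argument has a genuine gap.

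Your Cauchy--Schwarz reduction requires uniform control of the weighted norm $\int|W_3^{(z)}(a(y))|^2|y|^{2\Re(\chi)-1}\,d^\times y$, and your proposed bulk/tail fix does not work. The trouble is the small-$|y|$ tail. Lemma~\ref{lem:unram-W-bounds} for tempered $\pi_3$ gives only $|W_3^{(z)}(a(y))|\ll \mathcal{S}_d(W_3)\,|y|^{1/2}$ near $0$, so the tail integrand is $\asymp Q^{2d'}|y|^{2\Re(\chi)}$ with respect to $d^\times y$. When $\Re(\chi)=0$ (the case where $\pi_2$ is a tempered principal series) this integral \emph{diverges}; and even for $\Re(\chi)>0$ small, you cannot push the cutoff down far enough to kill the factor $Q^{2d'}$. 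There is no ``power saving in $y$'' to compensate the Sobolev loss here: the exponent of $|y|$ near $0$ is exactly $2\Re(\chi)$, independent of $N$. The underlying issue is structural: Cauchy--Schwarz separates $W_1$ and $W_3$ and thereby discards the combined decay $|W_1W_3|\ll |y|^{1-\vartheta}$ that makes the original integral $\mathcal{F}(z)$ converge in the first place. The $L^2$ analytic-newvector bound $\|\pi_3(n'(z))v_3-v_3\|<\delta$ simply does not control weighted $L^2$ norms.

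The paper circumvents this by working pointwise rather than in $L^2$. It establishes (see \eqref{diff}) the uniform pointwise estimate
\[
W_i\big(a(y)n'(x/Q)\big)-W_i(a(y))\ \ll\ \frac{C(\pi_i)}{Q}\,c\,|y|^{1/2-\theta-\eta},
\]
valid for $x$ sufficiently small and with no Sobolev-norm loss. This is proved by Mellin-expanding $W_i(a(y)n'(x/Q))|y|^{-\sigma}$, applying the $\PGL(2)\times\GL(1)$ local functional equation to move to the other side of the Whittaker model, and then exploiting the rapid decay of $W_i(a(\cdot)w)$ together with $|e(-txC(\pi_i)/Q)-1|\ll |t|C(\pi_i)/Q$ (this is the argument from \cite[\S2.1]{JN}). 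With this pointwise input, the two error integrals in the decomposition can be bounded directly---one picks up the factor $C(\pi_3)/Q=1$ times the small constant $c$ (from the $W_3$-difference, paired with the $\pi_1$-uniform decay of $W_1^{(z)}$ from Lemma~\ref{lem:unram-W-bounds}), and the other picks up $C(\pi_1)/Q\ll 1/Q$ (from the $W_1$-difference, paired with the fixed bump $W_3$). Both integrals converge absolutely because the pointwise bounds supply enough decay in $y$ jointly, without any appeal to weighted $L^2$ norms of $W_3^{(z)}$.
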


\begin{proof}
We write the zeta integral with the Iwahori coordinates and change variables to obtain
\begin{multline}\label{display1}
\chi^{-1}_{1/2}(Q)\Psi(W_1,f_2(.a(Q)),W_3)\\=\chi^{-1}_{-1/2}(Q)\int_{y\in\R^\times}\int_{x\in\R}W_1\Big(a(y)n'\Big(\frac{x}{Q}\Big)\Big)W_3\Big(a(y)n'\Big(\frac{x}{Q}\Big)\Big)\\
\times f_2\big(a(yQ)n'(x)\big)dx\frac{d^\times y}{|y|}.
\end{multline}
Note that
$$f_2\big(a(yQ)n'(x)\big)=\chi_{1/2}(yQ)f_2(n'(x)),$$
and the support condition of $\Phi$ confirms that $f_2(n'(x))$ is supported in a sufficiently small neighbourhood of $0$. We rewrite the right hand side of \eqref{display1} as
\begin{equation}\label{integral1}
\begin{split}
  &  \int_{y\in\R^\times}\int_{x\in\R}\left(W_3\Big(a(y)n'\Big(\frac{x}{Q}\Big)\Big)-W_3(a(y))\right)\\& \quad\quad\quad \times W_1\Big(a(y)n'\Big(\frac{x}{Q}\Big)\Big)f_2(n'(x))\chi_{-1/2}(y)dx\,d^\times y\\
 &   +\int_{y\in\R^\times}\int_{x\in\R}\left(W_1\Big(a(y)n'\Big(\frac{x}{Q}\Big)\Big)-W_1(a(y))\right)\\&\quad\quad\quad \times W_3(a(y))f_2(n'(x))\chi_{-1/2}(y)dx\,d^\times y\\
&    +\int_{y\in\R^\times}W_1(a(y))W_3(a(y))\chi_{-1/2}(y)d^\times y\int_{x\in\R}f_2(n'(x))dx. 
    \end{split}
\end{equation}
Note that the third integral equals one by the choice of normalizations of the vectors.

As $\pi_1$  is fixed, we may apply  \eqref{eqn:unram-W-bound-near-1} to conclude that 
$$W_1\big(a(y)n'(x/Q)\big)\ll_N \min\big(|y|^{1/2-\vartheta},|y|^{-N}\big)$$
for $x \ll 1$. Moreover, given some sufficiently small constant $c> 0$,   then for $x$   sufficiently small (in terms of $c$ only)  and $C(\pi_i)\leq Q$ we have 
\begin{equation}\label{diff}
W_i(a(y)n'(x/Q))-W_i(a(y))\ll \frac{C(\pi_i)}{Q}c|y|^{1/2-\theta-\eta},
\end{equation}
for any $\eta>0$ and $\theta=\vartheta,0$ if $i=1,3$, respectively.
This estimate is essentially contained in   \cite[\S 2.1]{JN}
and can be seen as follows:  using Mellin inversion for
$W_i(a(y)n'(x/Q))|y|^{-\sigma}$ for $\sigma=1/2-\theta-\eta$ and
the $\PGL(2)\times \GL(1)$
local functional equation, 
we write the above difference (as in \cite[\S 2.1]{JN}\footnote{In that paper authors took $y=1$.}) 
\begin{multline*}
\int_{\Re(\chi')=0}\chi'_{\sigma}(y)\frac{\chi'_{\sigma}(C(\pi_i))}{\gamma(1/2-\sigma,\pi_i\otimes{\chi'}^{-1})}\\
\times \int_{t\in\R^\times}\left(e\left(-\frac{txC(\pi_i)}{Q}\right)-1\right)W_i\big(a(C(\pi_i)t)w\big)\chi'_{\sigma}(t)\, d^\times t\, d\chi',
\end{multline*}
and \eqref{diff} follows as in \cite[\S2.1]{JN}.

Now we define 
$$I_2(\chi):=\int_{x\in\R} \left|f_2(n'(x))\right|dx\leq \int_{(t,x)\in\R^2}\Phi(tx,t)|t|^{2\Re(\chi)}dt\,dx\ll 1.$$
The first integral in \eqref{integral1} is therefore
$$\ll c I_2(\chi)\int_{y\in\R^\times}|y|^{1/2-\eta}\min (|y|^{1/2-\vartheta-
\eta},|y|^{-N})|y|^{\Re(\chi)-1/2}d^\times y\ll c,$$
as $\Re(\chi)\ge 0$ and $\vartheta<1/2$.
The second integral in \eqref{integral1} is similarly
$$\ll Q^{-1}I_2(\chi)\int_{y\in \R^\times}W_3(a(y))|y|^{1/2-\vartheta}|y|^{\Re(\chi)-1/2}d^\times y\ll Q^{-1}.$$
Thus we estimate
$$\Psi(W_1,f_2(.a(Q)),W_3)\gg Q^{-1/2+\Re(\chi)}(1+O(c)+O(1/Q))$$
which concludes the proof upon choosing $c$ sufficiently small in terms of the implied constants. 
\end{proof}

\begin{theorem}\label{lower-bound}
Let $\pi_i$ for $i=1,2,3$ be generic irreducible unitary representations of $G$ such that $\pi_1$ and $\pi_2$ are $\vartheta$-tempered with $\vartheta<1/4$
and $\pi_3$ is tempered with sufficiently large  conductor $Q$.  The (smooth) vectors $v_i\in\pi_i$ specified at the beginning of \S
\ref{choice}
have the following properties:
\begin{enumerate}[(i)]
\item $\|v_i\| \asymp 1$
  for $i=1,2,3$.
\item We have $v_1 = v_1^0$
  and $v_2 = a(Q) v_2^0$
  where $v_1^0, v_2^0$ are fixed (independent of
  $Q$).
\item
  For any fixed nontrivial unitary character $\psi$ of $N$,
  the vector
  $v_3$ is given
  in the $\psi$-Kirillov model
  by a fixed bump function.
\item We have
$$\int_{g\in G}\prod_{i=1}^3\langle \pi_i(g)v_i,v_i\rangle dg \gg_{\pi_1,\pi_2} Q^{-1}.$$
\end{enumerate}
\end{theorem}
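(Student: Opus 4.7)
The strategy is to obtain the lower bound by assembling Lemmas \ref{extended-ichino}, \ref{main-lemma-lower-bound}, \ref{normalized-lfe}, and \ref{lem:gamma-s-rho-via-conductors}. Parts (ii) and (iii) of the theorem are immediate from the construction at the start of \S\ref{choice}, so I would dispatch them first.

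For part (i), I would note that $v_1$ and $v_3$ are given by fixed bump functions in their Kirillov models and hence have norms $\asymp 1$ by the normalization \eqref{defn-inner-prod}. For $v_2 = a(Q) v_2'$, unitarity of $\pi_2$ gives $\|v_2\| = \|v_2'\|$, and $\|v_2'\|^2$ is computed via the invariant sesquilinear pairing of \S\ref{sec:embedding} applied to the preimage $f_2 \in \Ic(\chi)$ of $v_2'$; this is a bounded, non-zero integral depending only on $\Phi$ and $\pi_2$, hence $\asymp 1$ uniformly in $Q$.

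The main assertion is (iv). I would first apply Lemma \ref{extended-ichino} with $v_2 = a(Q) v_2'$, whose image in $\pi_2^p$ is $f_2(\cdot\, a(Q))$, to obtain
\begin{equation*}
\int_{g\in G} \prod_{i=1}^3 \langle \pi_i(g)v_i, v_i \rangle \, dg = c(\chi)\, \Psi(W_1, f_2(\cdot\, a(Q)), W_3)\, \overline{\Psi(W_1, \tilde{f}_2, W_3)},
\end{equation*}
where, using $G$-equivariance of $M^*(\chi)$, the function $\tilde{f}_2$ equals either $f_2(\cdot\, a(Q))$ (tempered principal series case) or $(M^*(\chi)f_2)(\cdot\, a(Q))$ (discrete or complementary series case). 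In the former, $\Re(\chi) = 0$ and Lemma \ref{main-lemma-lower-bound} immediately supplies $|\Psi|^2 \gg Q^{-1}$. In the latter, I would use Lemma \ref{normalized-lfe} to rewrite the second zeta integral as
\begin{equation*}
\Psi(W_1, (M^*(\chi)f_2)(\cdot\, a(Q)), W_3) = \gamma(1/2, \pi_1 \otimes \pi_3 \otimes \chi)\, \Psi(W_1, f_2(\cdot\, a(Q)), W_3),
\end{equation*}
and then invoke Lemma \ref{lem:gamma-s-rho-via-conductors} applied to the self-dual twist $\pi_1 \otimes \pi_3 \otimes \chi_0$ evaluated at $s = 1/2 + \Re(\chi)$ to conclude $|\gamma| \asymp Q^{-2\Re(\chi)}$. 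Combining this with Lemma \ref{main-lemma-lower-bound} yields that the product has absolute value $\gg Q^{-2\Re(\chi)} \cdot Q^{-1+2\Re(\chi)} = Q^{-1}$; positivity (hence a genuine lower bound on the nonnegative left hand side) is automatic, since the triple integral is manifestly a nonnegative real number.

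The only place where subtlety enters is the matching lower bound $|\gamma(1/2, \pi_1 \otimes \pi_3 \otimes \chi)| \gg Q^{-2\Re(\chi)}$, which complements the upper bound \eqref{bound-gamma-factor}. This is not really an obstacle: since $|e_\rho| = 1$, $|e^{i\phi_\rho(\tau)}| = 1$, and $|g_\sigma(\tau,\rho)| \asymp 1$ in Lemma \ref{lem:gamma-s-rho-via-conductors}, the estimate there is in fact an $\asymp$ rather than merely a $\ll$ at fixed $\chi$ and fixed $\pi_1$, varying $\pi_3$ with $C(\pi_3) = Q$. The remaining work is purely bookkeeping of the various normalizations.
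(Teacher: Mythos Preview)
Your proof is correct and follows essentially the same route as the paper's. One minor clarification: the bound \eqref{bound-gamma-factor} is already the \emph{lower} bound on $|\gamma|$ that you need (it upper-bounds $\gamma^{-1}$, hence $|\gamma| \gg Q^{-2\Re(\chi)}$), so your final paragraph supplying this via Lemma~\ref{lem:gamma-s-rho-via-conductors} is redundant rather than filling a gap; the paper simply cites \eqref{bound-gamma-factor} at that step.
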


\begin{proof}
  Assertions (i), (ii),  and (iii) are clear from the construction.
  (The description of $v_3$ in the Kirillov model is independent
  of the choice of $\psi$: different choices give
  rise to models that are isomorphic to one another
  via left translation by a suitable diagonal matrix.)

  To verify (iv),
  we embed $\pi_2\hookrightarrow \pi_2^p=\Ic(\chi)$ for some $\chi$ with
  $\Re(\chi)\ge 0$ such that either $\Re(\chi)=0$ or $\Im(\chi)=0$. 
  For $\Re(\chi)=0$, the integral in question evaluates to
$$c(\chi)|\Psi(W_1,f_2(.a(Q)),W_3)|^2$$
by Lemma \ref{extended-ichino},
and Lemma \ref{main-lemma-lower-bound} implies the required bound. 
For $\Im(\chi)=0$ and $\Re(\chi)>0$,
we apply Lemma \ref{extended-ichino} and Lemma
\ref{normalized-lfe}
to see that the integral in question is
$$c(\chi)\overline{\gamma(1/2,\pi_1\otimes\pi_3\otimes\chi)}|\Psi(W_1,f_2(.a(Q)),W_3)|^2.$$
An appeal to   \eqref{bound-gamma-factor} and Lemma
\ref{main-lemma-lower-bound}
then completes the proof. 
\end{proof}

\section{A triple Whittaker integral}\label{whittaker}

\subsection{Setting and statement of results} In this section we
evaluate asymptotically an integral containing three Whittaker
functions which is the crucial ingredient for an understanding
of the right hand side of \eqref{period}.
{We will not need any knowledge on special functions, but we do use extensively Stirling's formula and stationary phase analysis as described in Sections \ref{localTate} and \ref{sec25}.}

We retain the
basic notation of \S\ref{sec:basic-notation}. We continue to adopt the following setting (as in the previous
section):
 \begin{itemize}
\item  $\pi_1$ and $\pi_2$ are fixed $\vartheta$-tempered
  generic irreducible unitary representations of $G$, 
  with $0 \leq \vartheta < 1/2$ fixed. 
\item  $\pi_3$ is a varying tempered generic irreducible
  unitary
  representation of $G$, whose analytic conductor
  (normalized
  as in \S\ref{localTate})
  we
  denote by $Q := C(\pi_3)$. 
\item
  Recall that we realize each $\pi_j$ in its Whittaker model as a space of
  functions $W$ satisfying $W(n(x) g) = e(x) W(g)$, where $e(x)
  := e^{2 \pi i x}$.
  Also, recall from \eqref{defn-inner-prod} that we normalize this realization
  so that the inner product
  on $\pi_j$ is given in the Kirillov model
  by integration over the diagonal subgroup:
  $\|W\|^2 = \int_{y \in \mathbb{R}^\times} |W(a(y))|^2 \, d^\times y$. 
\item We let $W_j \in \pi_j$
  be the image of the vector
  $v_j$
  as in Theorem \ref{lower-bound}.
  Thus $W_1 = W_1^0$ and $W_2 = a(Q) W_2^0$
  with $W_1^0, W_2^0$ fixed
  (independent of $Q$).
\end{itemize}
The basic bounds
from Lemma \ref{lem:unram-W-bounds} can be used for $W_1 = W_1^0$ and $W_2^0$. We will derive useful bounds for $W_3$ in \S \ref{sec-newvector} below. 

We recall the notation and conventions of smooth weight
functions in \S \ref{weight}. Our basic large parameter here is
$Q$, so $A \preccurlyeq B$ means $A \ll_{\eps}
Q^{\eps}B$. As usual, the value of $\eps$  may
change from line to line.

For $y_1, y_2 \in \mathbb{R}^\times$ with $y_1 + y_2
\neq 0$,
we define $y_3 \in \mathbb{R}^\times$
by requiring that
\begin{equation}\label{eqn:y1-y2-y3-sum-zero}
  y_1 + y_2 + y_3 = 0.
\end{equation}
We set
\begin{equation}\label{defF}
  F(y_1, y_2) :=
  \int_{\theta \in \mathbb{R} / \pi \mathbb{Z}}
    \prod_{j=1,2,3}
    W_j(a(y_j) k(\theta))
\,   d \theta.
\end{equation}

The main result of this section is the following estimate for
$F$ and its derivatives.
\begin{theorem}\label{thm:triple-whittaker}
  We have
  \[
    F(y_1,y_2) =     \sum_{\pm}
   e\Big(\pm 2 \sqrt{Q} \Psi \Big(\frac{y_1}{y_2}\Big)\Big)
    \mathcal{N}_{\pm}(y_1,y_2) + \mathcal{E}(y_1,y_2),
  \]
 where $\Psi$ is a smooth function satisfying the estimates
 \begin{equation}\label{psi}
 \Psi(y) = |y|^{1/2} + O(|y|^{3/2}), \quad \Psi^{(j)}(y) \ll |y|^{1/2 - j}\quad (j \in \Bbb{N})
 \end{equation}
 and for fixed $j_1, j_2, N \geq 0$ we have 
    \[
    (y_1 \partial_{y_1})^{j_1} (y_2 \partial_{y_2})^{j_2}
    \mathcal{N}_{\pm}(y_1, y_2)
    \preccurlyeq 
     \Big( \frac{|y_2|}{Q}\Big)^{3/4} (1 + |y_1|)^{-N} \Big(1 + \frac{|y_2|}{Q}\Big)^{-N}
  \]
  and $\mathcal{E} = \mathcal{E}_1 + \mathcal{E}_2 +
  \mathcal{E}_3$,
  where for some absolute constant $c > 0$ we
  have 
 \begin{displaymath}
 \begin{split}
   & \mathcal{E}_1(y_1,y_2) =  Q^{-1/2}(1+|y_1| + |y_2|)^{-N},\\
    & \mathcal{E}_2(y_1, y_2) = 
  Q^{c}(1+|y_1|+ |Qy_2|)^{-N},\\
  & \mathcal{E}_3(y_1, y_2) =  (1 + |y_1| + |y_2/Q|)^{-N} Q^{-N}. 
\end{split}
\end{displaymath}
\end{theorem}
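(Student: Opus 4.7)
The plan is to evaluate the $\theta$-integral asymptotically by combining Mellin inversion on the Kirillov model of $W_3$, the $\GL_2\times\GL_1$ local functional equation for $\pi_3$, Stirling's formula for the resulting gamma factor (Lemma \ref{lem:gamma-s-rho-via-conductors}), and two-variable stationary phase (Lemma \ref{lem2}). The factors $W_1(a(y_1)k(\theta))$ and $W_2(a(y_2)k(\theta))$ play a secondary, essentially ``soft'' role: $W_1 = W_1^0$ is fixed, while $W_2(a(y_2)k(\theta))=W_2^0(a(y_2) k(\theta) a(Q))$, so their joint support together with Lemma \ref{lem:unram-W-bounds} already account for the decay factors $(1+|y_1|)^{-N}$ and $(1+|y_2|/Q)^{-N}$ and produce flat behavior on the $\theta$-scale of interest. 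The heart of the analysis is therefore the treatment of $W_3(a(y_3) k(\theta))$.

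For $\theta$ in the bulk (bounded away from $0$ and $\pm\pi/2$), I would use the Bruhat decomposition $k(\theta) \equiv n(\sin\theta\cos\theta)\, a(\csc^2\theta)\, w\, n(\cot\theta) \pmod{\text{center}}$ to write
\[
W_3(a(y_3) k(\theta)) = e(y_3 \sin\theta\cos\theta)\, W_3(a(y_3 \csc^2\theta)\, w\, n(\cot\theta)).
\]
The second factor is replaced, via Mellin inversion on the Kirillov model of $W_3$ and the local functional equation, by a contour integral in a character $\chi = |.|^{i\tau}\sgn^a$ of $\gamma(1/2, \pi_3\otimes\chi)$ against the fixed Mellin transform of the bump function defining $W_3$, multiplied by $\chi(y_3 \csc^2\theta)$ and a $\chi$-dependent additive phase from $n(\cot\theta)$. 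By Lemma \ref{lem:gamma-s-rho-via-conductors}, $\gamma(1/2,\pi_3\otimes\chi)$ splits into an oscillatory factor $\exp(i\phi_{\pi_3}(\tau))$ times a flat weight, converting the double integral over $(\theta,\tau)$ into a genuine oscillatory integral.

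Third, I would apply Lemma \ref{lem2} iteratively in $(\theta,\tau)$. Using the explicit form of $\phi_{\pi_3}$ in \eqref{maass}--\eqref{hol}, the stationary point satisfies $|\tau^*|\asymp\sqrt{Q}$ and $|\theta^*|^2 \asymp |\tau^*|/|y_3|$, so in the main range $|y_2|\asymp Q$, $|y_1|=O(1)$ we obtain $|\theta^*|\asymp |y_1/y_2|^{1/2}$. The critical value of the total phase consolidates into $\pm 2\sqrt{Q}\,\Psi(y_1/y_2)$ with $\Psi(y) = |y|^{1/2} + O(|y|^{3/2})$ satisfying \eqref{psi}; the sign $\pm$ parametrises the discrete choice $\sgn(\tau)$ in Mellin inversion. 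The two stationary-phase Jacobians contribute a factor $|\theta^*| \cdot |\tau^*|^{-1/2}$ (one power of $|y_1|^{1/2}$ combined with $|y_2|^{-1/2} \cdot Q^{-1/4}$), matching the claimed amplitude $\mathcal{N}_{\pm}$ of size $(|y_2|/Q)^{3/4}$ after accounting for the overall Mellin normalisation.

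The three error pieces handle the complementary regimes: $\mathcal{E}_3$ from the tails of the fixed bump functions $W_1^0$ and $W_2^0$ in $y_1$ and $y_2/Q$; $\mathcal{E}_2$ from the region of $\theta$ near $\pm\pi/2$, where Iwasawa analysis of $k(\theta) a(Q)$ forces the decay in $|Qy_2|$; and $\mathcal{E}_1$ from the residual error in stationary phase, saving the additional $Q^{-1/2}$ beyond the leading term. Each is obtained by repeated integration by parts via Lemma \ref{lem1} once the oscillation has been isolated. The main obstacle will be ensuring uniformity of the stationary-phase computation across the two cases \eqref{maass} (principal series) and \eqref{hol} (discrete series) for $\pi_3$: verifying that the critical value assembles into the single clean, representation-independent function $\sqrt{Q}\Psi(y_1/y_2)$, and that the second-derivative lower bound required by Lemma \ref{lem2} holds uniformly in both cases and over the full range of $(y_1, y_2)$, demands careful bookkeeping of the Stirling approximations and of the dependence of the flat factor $g_\sigma(\tau,\pi_3)$ on $\pi_3$.
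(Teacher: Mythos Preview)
Your overall strategy---Mellin expansion of $W_3$ via the local functional equation, Stirling for $\gamma(1/2,\pi_3\otimes\chi)$, then stationary phase---is the same engine the paper uses. But there is a genuine gap in your treatment of $W_2$.

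The claim that $W_2(a(y_2)k(\theta))=W_2^0(a(y_2)k(\theta)a(Q))$ is ``flat on the $\theta$-scale of interest'' is false. Writing $k(\theta)=n(\cot\theta)\,a(\csc^2\theta)\,w\,n(\cot\theta)$ (note: your factor $n(\sin\theta\cos\theta)$ should be $n(\cot\theta)$), one has $a(y_2)k(\theta)a(Q)=n(y_2\cot\theta)\,a(y_2\csc^2\theta/Q)\,w\,n(\cot\theta/Q)$, so
\[
W_2(a(y_2)k(\theta))=e(y_2\cot\theta)\,W_2^0\bigl(a(y_2\csc^2\theta/Q)\,w\,n(\cot\theta/Q)\bigr).
\]
The second factor is indeed controlled by Lemma~\ref{lem:unram-W-bounds}, but the first is a phase with $|y_2|$ potentially as large as $Q$; it is \emph{not} flat. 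The paper's analysis extracts the analogous phase from all three $W_j$ and uses $y_1+y_2+y_3=0$ to collapse $e((y_1+y_2+y_3)\cot\theta)=1$, leaving only the mild phase $e(-y_1(z+z^{-1}))$ (with $|y_1|\preccurlyeq 1$) that then drives the stationary-phase computation. If you only decompose $W_3$, you are left with a residual phase of frequency $\asymp Q$ in $\theta$, and your stationary-phase bookkeeping will not close.

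Beyond this, a few smaller corrections. The paper works in Iwahori coordinates $z=\tan\theta$ and splits $F=F_w+F_1^\flat+F_1^\sharp$; the hard case is $F_1^\sharp$, corresponding to \emph{small} $\theta$ with $Q^{-1}\ll|\tan\theta|$, not the ``bulk'' $\theta\asymp 1$. The attribution of error terms is reversed: $\mathcal{E}_2$ comes from $\theta$ near $0$ (where $k(\theta)\approx 1$, so $W_2^0(a(Qy_2))$ forces $|Qy_2|\asymp 1$), while the range near $\pm\pi/2$ yields $F_w$, which is easy and feeds into $\mathcal{N}_\pm$. The term $\mathcal{E}_1$ arises not from stationary-phase remainders but from a trivial estimate on the dyadic range $Z\gg Q^{1/2-\eps}$. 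Finally, there are \emph{three} stationary-phase steps, not two: the inner Mellin integral $\int_t e(tz)W_3(a(t))\chi(t)\,d^\times t$ itself requires stationary phase in $t$ (forcing $|t_\chi|\asymp Z$), before the $z$- and $t_\chi$-integrals are evaluated.
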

Here and in the following, all implied constants may depend on $N$ and $j$, with or without subscript. The proof gives an exact formula for $\Psi$ which however is irrelevant for our application. It depends mildly on whether $\pi_3$ belongs to the principal series or to the discrete series. {The key point of Theorem \ref{thm:triple-whittaker} is that it produces the desired and expected oscillatory factor, cf.\ \eqref{alternative}.}

\subsection{Preliminary decomposition}
It will be convenient first to switch to Iwahori coordinates.
We may find an even function
$\phi_0 \in C_c^\infty(\mathbb{R} / \pi \mathbb{Z})$, 
supported on   $(-\pi/3,\pi/3) + \pi \Bbb{Z}$,
so that $\phi_0(\theta) + \phi_0(\pi/2 + \theta) = 1$ for all
$\theta$.
Setting $z := \tan \theta$
and using the matrix identities
\[
  k(\theta) = n(-z) a(z^2 + 1) n'(z),
\]
\[
  k(\pi/2 - \theta) =
  n(z) a(z^2 + 1) w
  n(z)
\]
and the relation $d \theta = (1 + z^2)^{-1} \, d z$,
we see that
$F(y_1,y_2)
=
F_1(y_1,y_2) + F_{w}(y_1,y_2)$,
where,
with the abbreviation $y_j' := y_j (z^2 + 1)$, we have  
\[
  F_1(y_1,y_2)
  :=
  \int_{z \in \mathbb{R}}
  \Big(
    \prod_{j=1,2,3}
    W_j(a(y_j') n'(z))
  \Big)
  \phi_0(\arctan(z)) \, \frac{d z}{z^2 + 1},
\]
\[
  F_{w}(y_1,y_2)
  :=
  \int_{z \in \mathbb{R}}
  \Big(
    \prod_{j=1,2,3}
    W_j(a(y_j') w n(z))
  \Big)
  \phi_{0}(\arctan(z)) \, \frac{d z}{z^2 + 1}.
\]
Note that $z \mapsto \phi_0(\arctan(z))$ defines a smooth
compactly-supported function on $\mathbb{R}$.

We now further decompose $F_1$.
We write $1$ as a sum
$\phi^\sharp + \phi^{\flat} $ of smooth
functions on $\mathbb{R}^\times$ with
\begin{itemize}
\item  $\phi^{\flat}(z)$
  supported on $z \ll Q^{-1}$,
\item  $\phi^{\sharp}(z)$ supported on
  $Q^{-1} \ll z $, 
\end{itemize}
and with each function $\phi$ in this
decomposition satisfying $(z \partial_z)^j \phi(z) \ll 1$.  We
accordingly decompose
$F_1 =  F_1^{\flat} + F_1^{\sharp}$ by
weighting the $z$-integral. In summary,
we have decomposed
\begin{equation}\label{eqn:F-equals-F-w-F-1-etc}
  F = F_w +  F_1^{\flat} + F_1^{\sharp}.
\end{equation}
We will verify that each of the three terms
on the
right hand side of
\eqref{eqn:F-equals-F-w-F-1-etc}
satisfies the conclusions of
Theorem \ref{thm:triple-whittaker}.
The first two terms are fairly straightforward
to analyze.
We treat them in
\S\ref{sec:trip-whit-prel-estim}. {Indeed, we will  see that the contribution of $F_1^{\flat}$ can be absorbed into the error term $\mathcal{E}$, while $F_w$ is non-negligible only if $y_1 \preccurlyeq 1$ and $y_2$ is roughly of size $Q$, in which case the oscillatory factor $e(\pm Q^{1/2} \Psi(y_1/y_2))$ is flat in the sense of \S \ref{weight}.} 
The somewhat more intricate analysis of $F_1^\sharp$ is
carried out in \S\ref{sec:estimates-f_1sharp}.

\subsection{Interlude: bounds for newvectors}\label{sec-newvector}

For the relevant asymptotic analysis we will need certain uniform bounds of the test vectors. We record them here.

\begin{lemma}
  Let $W \in \pi$,
  $y \in \mathbb{R}^\times$, $z \in \mathbb{R}$
  and $\sigma > -1/2+\vartheta$.
  Then $W(a(y) w n(z))$ admits the absolutely
  convergent integral representation
  \begin{equation}\label{eqn:mellin-expansion-W-aywnz}
    W(a(y)wn(z))=
    \int_{\Re(\chi) = \sigma } \chi(y)
    \gamma(1/2, \pi \otimes \chi)
    \left( 
      \int_{t\in\mathbb{R}^\times}
      e(t z)
      W(a(t))
      \chi(t)
      \,
      d^\times t
    \right)
    \,
    d \chi.
  \end{equation}
\end{lemma}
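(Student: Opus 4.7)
The plan is to prove the formula by combining Mellin inversion in $y$ with the local $\GL(2) \times \GL(1)$ functional equation at $s = 1/2$. Informally, the formula is a ``Fourier transform'' description of how $\pi(wn(z))$ acts in the Kirillov model: shifting by $n(z)$ modulates the argument by $e(tz)$, shifting by $w$ introduces a gamma factor after Mellin transform, and Mellin inversion reconstructs the Whittaker value at $a(y)wn(z)$.

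To execute this, set $W_z := \pi(n(z)) W$, so that in the Kirillov realization $W_z(a(t)) = e(tz) W(a(t))$, and on the other hand $W_z(a(y)w) = W(a(y)wn(z)) =: g(y)$. For $\pi$ generic unitary on $\PGL_2(\R)$ (hence self-dual), the standard local $\GL(2) \times \GL(1)$ functional equation at $s = 1/2$, applied to $W_z$ against the character $\chi$, reads
\[
  \int_{\R^\times} W_z(a(y) w) \chi^{-1}(y) \, d^\times y
  =
  \gamma(1/2, \pi \otimes \chi) \int_{\R^\times} W_z(a(t)) \chi(t) \, d^\times t.
\]
Substituting the explicit formulas identifies the Mellin transform $\hat g(\chi) := \int_{\R^\times} g(y) \chi^{-1}(y) \, d^\times y$ with $\gamma(1/2, \pi \otimes \chi) \mathcal{W}(\chi, z)$, where $\mathcal{W}(\chi, z) := \int_{\R^\times} e(tz) W(a(t)) \chi(t) \, d^\times t$. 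Applying the Mellin inversion formula \eqref{eqn:mellin-inversion} then gives \eqref{eqn:mellin-expansion-W-aywnz}.

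The only delicate point is absolute convergence along the line $\Re(\chi) = \sigma$ for any $\sigma > -1/2 + \vartheta$. For the inner integral, Lemma \ref{lem:unram-W-bounds} gives $W(a(t)) \ll \min(|t|^{1/2 - \vartheta}, |t|^{-N})$, which makes $\mathcal{W}(\chi, z)$ absolutely convergent precisely in this range. For the outer $\chi$-integral, repeated integration by parts in $t$ on $\mathcal{W}(\chi, z)$ (combined with the Whittaker decay) yields arbitrary polynomial decay in $|\Im \chi|$, which more than cancels the polynomial growth of $\gamma(1/2, \pi \otimes \chi)$ supplied by Stirling's formula \eqref{eqn:stir1}. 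The identity initially holds in a vertical strip where both sides converge directly; it then extends to all $\sigma > -1/2 + \vartheta$ by holomorphy of the integrand in $\chi$ and standard contour shifting. The main obstacle, such as it is, is precisely this bookkeeping --- pairing Whittaker decay against Stirling growth on the correct contour --- rather than the underlying algebraic identity, which is immediate from the local functional equation.
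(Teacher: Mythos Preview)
Your proof is correct and follows essentially the same route as the paper: Mellin inversion in $y$ combined with the $\PGL(2)\times\GL(1)$ local functional equation and the identity $W(a(t)n(z))=e(tz)W(a(t))$. The only cosmetic difference is ordering---the paper first Mellin-inverts $y\mapsto W(a(y)wn(z))$ using the decay bound \eqref{eqn:unram-W-bound-near-w} and then applies the functional equation to the inner integral, whereas you first identify the Mellin transform via the functional equation and then invert---but the content is identical.
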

\begin{proof}
  By Mellin inversion \eqref{eqn:mellin-inversion} -- using
  the estimate \eqref{eqn:unram-W-bound-near-w} to
  verify its hypotheses -- we have
  $$W(a(y)wn(z))=
  \int_{\Re(\chi) =\sigma} \chi(y) \int_{t\in\mathbb{R}^\times}
  W(a(t)wn(z)) \chi^{-1}(t) \, d^\times t
  \, d\chi.$$
  By the $\mathrm{PGL}(2)\times\mathrm{GL}(1)$ local 
  functional equation (see \cite[Theorem 3.1]{Cog}), 
  the inner integral evaluates to
  $$
  \gamma(1/2, \pi \otimes \chi)
  \int_{t\in\mathbb{R}^\times}
  W(a(t)n(z))
  \chi(t)
  d^\times t.$$
  We conclude by calculating that
  $W(a(t) n(z))
  = W(n(t z) a(t)) = e(t z) W(a(t))$.
\end{proof}

 \begin{lemma}\label{lem:newvector-W-bounds}
  \label{lem:bound-W-a-w-n-small-z} Let $\pi$ be tempered of conductor $Q$ and $W\in \pi$ an analytic newvector. 
  For  fixed $N >0$ and  $j \in \Bbb{Z}_{\geq 0}$ 
  we have 
  \begin{equation}\label{eqn:newvector-W-bound-1}
    (x
    \partial_{x})^j
    W\big(a(Q x) w n(z)\big)
    \preccurlyeq  \min\big(|x|^{-N}, |x|^N +
      |x|^{1/2}Q^{-N}\big)
      \end{equation}
   uniformly in $z \preccurlyeq 1$. 
\end{lemma}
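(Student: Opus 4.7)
The plan is to apply the Mellin representation \eqref{eqn:mellin-expansion-W-aywnz} with $y=Qx$, and then shift the contour in $\chi$: to the right for small $|x|$, and to the left for large $|x|$. Writing
\[
W(a(Qx)wn(z)) = \int_{\Re(\chi)=\sigma} \chi(Qx)\,\gamma(1/2,\pi\otimes\chi)\,M_{W,z}(\chi)\,d\chi,
\]
with $M_{W,z}(\chi):=\int_{t\in\R^\times} e(tz)W(a(t))\chi(t)\,d^\times t$, the derivatives $(x\partial_x)^j$ pass under the integral and multiply the integrand by $s_\chi^j$ (writing $\chi=|.|^{s_\chi}\sgn^a$), so it suffices to bound the resulting integrals with this extra factor.

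Since $W$ is an analytic newvector, its Kirillov realization $t\mapsto W(a(t))$ is a fixed bump in $C_c^\infty(\R^\times)$ concentrated near $1$. Consequently $M_{W,z}(\chi)$ is entire in $\chi$, and repeated integration by parts in $t$ (legitimate because the support avoids $0$ and $\infty$) yields, uniformly for $z\preccurlyeq 1$ and bounded $\Re(\chi)$,
\[
s_\chi^j M_{W,z}(\chi)\preccurlyeq (1+|\Im(\chi)|)^{-K}\quad\text{for any fixed $K$.}
\]
For the gamma factor, Lemma \ref{lem:gamma-s-rho-via-conductors} together with the Stirling bounds of \S\ref{localTate} give $|\gamma(1/2,\pi\otimes\chi)|\preccurlyeq C(\pi\otimes\chi)^{-\Re(\chi)}$ away from poles, with $C(\pi\otimes\chi)\asymp\max(Q^{1/2},1+|\Im(\chi)|)^2$.

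To obtain the bound $\preccurlyeq |x|^{-N}$, shift to $\Re(\chi)=-N$. All poles of $\gamma(1/2,\pi\otimes\chi)$ lie in $\Re(\chi)\geq 1/2$ (they come from poles of the $\Gamma_\R$ or $\Gamma_\C$ factor in the numerator), so none are crossed. On the shifted line $|\chi(Qx)|=(Q|x|)^{-N}$; for $|\Im(\chi)|\preccurlyeq Q^{1/2}$ we have $|\gamma|\preccurlyeq Q^N$, producing an integrand of size $|x|^{-N}$, while for $|\Im(\chi)|\gg Q^{1/2}$ the rapid decay of $M_{W,z}$ (with $K$ chosen $\gg N+j$) dominates the polynomial growth of $\gamma$ and the factor $s_\chi^j$. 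To obtain the bound $\preccurlyeq |x|^N+|x|^{1/2}Q^{-N}$, shift instead to $\Re(\chi)=N$: the remaining line integral contributes $|x|^N$ by a symmetric argument. In the discrete series case $\pi=\pi(k)$, all poles of $\gamma(1/2,\pi\otimes\chi)$ occur at $\Re(\chi)\asymp k\asymp Q^{1/2}\gg N$, so none are crossed and only the $|x|^N$ term appears. In the principal series case $\pi=\pi(r,a)$ with $|r|\asymp Q^{1/2}$, one encounters simple poles at $\Re(\chi_*)\in\{1/2,3/2,\dots\}$ with $\Im(\chi_*)=\pm r\asymp\pm Q^{1/2}$; at each such pole the rapid decay of $s_{\chi_*}^j M_{W,z}(\chi_*)$ at $|\Im(\chi)|\asymp Q^{1/2}$ yields $\preccurlyeq Q^{-K/2}$ for any $K$, so the residue is bounded by $(Q|x|)^{\Re(\chi_*)}\,Q^{-K/2}$. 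For $|x|\leq 1$ the dominant contribution comes from $\Re(\chi_*)=1/2$, giving $|x|^{1/2}Q^{(1-K)/2}\preccurlyeq |x|^{1/2}Q^{-N}$ upon choosing $K$ large in terms of $N$ and $j$; higher poles contribute strictly less since $|x|^{\Re(\chi_*)}\leq|x|^{1/2}$.

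The main obstacle is the bookkeeping of the pole structure of $\gamma(1/2,\pi\otimes\chi)$ uniformly across the principal and discrete series, and in particular extracting the $|x|^{1/2}Q^{-N}$ term, which arises from the delicate interplay between the growth $(Q|x|)^{1/2}$ of the weight factor at the principal-series residue and the rapid decay of $M_{W,z}$ forced by the imaginary part $\asymp Q^{1/2}$ of that residue.
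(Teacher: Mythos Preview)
Your proof is correct and follows essentially the same approach as the paper: apply the Mellin expansion \eqref{eqn:mellin-expansion-W-aywnz}, use the rapid decay of the inner integral (your $M_{W,z}$, the paper's $V_z$) coming from the compact support of the Kirillov bump, bound the $\gamma$-factor via Stirling, and shift the contour to $\Re(\chi)=-N$ for large $|x|$ and to $\Re(\chi)=N$ for small $|x|$, picking up residues only in the principal series case where $|\Im(\chi_*)|\asymp Q^{1/2}$ makes them negligible. Your treatment is slightly more explicit than the paper's (you spell out the two regimes of $|\Im(\chi)|$ on the shifted line and the pole locations), but the argument is the same.
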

\begin{proof}
  We write the proof
  in the case
  $j=0$.  The
  general case is treated similarly, using that
  $(y \partial_y)^j \chi(y) = s^j \chi(y)$ for $\chi = |.|^s \sgn^a$. 
  
  By
  \eqref{eqn:mellin-expansion-W-aywnz},
  we have
  the Mellin expansion  
  \[
    W(a(Qx) w n(z))
    = \int _{\chi }
    \chi (Qx) \gamma (1/2, \pi \otimes \chi)
    V_z(\chi) \, d \chi,
  \]
  where
  \[
    V_z(\chi) := \int_{t\in\R^\times}
    e(t z) W(t) \chi(t) d^\times t.
  \]
  Since $z \preccurlyeq 1$ and $W$ is supported in a fixed compact set,
  we see that $V_z$ is entire and $V_z(\chi) \preccurlyeq C(\chi)^{-N}$  in vertical
  strips.  By Lemma \ref{lem:gamma-s-rho-via-conductors} and \cite[Lemma A.2]{HB}
    we have 
  $$
  \gamma(1/2, \pi \otimes \chi)  \ll
  C(\pi \otimes \chi)^{-\Re(\chi)}
  \ll
  C(\pi)^{-\Re(\chi)}
  C(\chi)^{2 |\Re(\chi)|},
  $$ 
  which holds uniformly in any fixed vertical
  strip and for any $\chi$ separated by $\gg 1$ from any pole of
  $\gamma(1/2,\pi \otimes
  \chi)$ 
    (In more detail,
  we apply
  Lemma
  \ref{lem:gamma-s-rho-via-conductors}
  by writing $\chi = \chi_0|.|^{\Re(\chi)}$,
  with $\chi_0$ unitary,
  and using that
  $\gamma(1/2, \pi \otimes \chi)
  = \gamma(1/2 + \Re(\chi), \pi \otimes \chi_0)$
  and $C(\pi \otimes \chi_0) \asymp C(\pi \otimes \chi)$.) 
   We obtain an adequate estimate in the case $|x| \geq 1$
  by shifting the contour to $\Re(\chi) =
  -N$,
  passing no poles.

  It remains to consider the case $|x| \leq 1$.
  We shift the contour to $\Re(\chi) = N$.
  Recall that $Q$ is assumed sufficiently large in terms of ``fixed'' quantities;
  in particular, $Q$ is much larger than any fixed power of $N$. 
  It follows that if $\pi$ belongs to the discrete series,
  then this contour shift passes no poles.
  The required estimate follows (in the stronger
  form obtained by omitting the term
  $Q^{-N} |x|^{1/2}$).
  Suppose now that $\pi$ belongs to the principal series.
  By hypothesis, $\pi$ is tempered.
  Thus each pole that we cross
  is of the form $\chi = \sgn^a |.|^{\sigma + i t}$ with
  $\sigma \geq 1/2-\vartheta$ and  $t \asymp Q^{1/2}$.
  The required estimate follows from the rapid decay of $V_z$.
 \end{proof}

\subsection{The easy cases: estimates for $F_w,  F_1^{\flat}$}\label{sec:trip-whit-prel-estim}

\begin{prop}
  \label{prop12}
  The function $F_w$ satisfies
  the conclusions of
  Theorem \ref{thm:triple-whittaker}.
\end{prop}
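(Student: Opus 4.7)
The plan is to prove the proposition by taking $\mathcal{N}_+^{(w)}(y_1,y_2) := F_w(y_1,y_2) \cdot e(-2\sqrt{Q}\,\Psi(y_1/y_2))$ (suitably cut off near $y_2 = 0$), $\mathcal{N}_-^{(w)} \equiv 0$, and a remainder $\mathcal{E}^{(w)}$ that fits into $\mathcal{E}_1$. The driving observation will be that $F_w$ is essentially localized on $|y_1| \preccurlyeq 1$ and $|y_2|/Q \asymp 1$, a region in which the oscillatory phase $e(-2\sqrt{Q}\,\Psi(y_1/y_2))$ is itself flat and bounded.

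First I will record pointwise and derivative bounds on each of the three Whittaker factors for $z$ in the bounded support of $\phi_0(\arctan z)$. For $W_1 = W_1^0$ fixed, Lemma \ref{lem:unram-W-bounds} applies directly. For $W_2$, I will exploit the matrix identity
$$a(y_2')w n(z)\, a(Q) \;=\; a(y_2'/Q)\, w n(z/Q) \quad \text{in } \PGL_2(\mathbb{R}),$$
which combined with $W_2 = \pi_2(a(Q)) W_2^0$ gives
$$W_2(a(y_2') w n(z)) \;=\; W_2^0\bigl(a(y_2'/Q)\, w n(z/Q)\bigr);$$
Lemma \ref{lem:unram-W-bounds} then applies with rescaled arguments $y_2'/Q$ and $z/Q \preccurlyeq 1/Q$. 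For the varying vector $W_3$, the relevant bound is Lemma \ref{lem:newvector-W-bounds}:
$$W_3(a(y_3') w n(z)) \;\preccurlyeq\; \min\bigl(|y_3/Q|^{-N},\; |y_3/Q|^{N} + |y_3/Q|^{1/2} Q^{-N}\bigr).$$
Combining these, $F_w$ is essentially supported where $|y_1| \preccurlyeq 1$ (from $W_1$) and $|y_3|/Q = |y_1+y_2|/Q \asymp 1$ (from $W_3$), which together force $|y_2|/Q \asymp 1$. The residual contribution from $|y_2| \ll 1$, where $y_3 \approx -y_1$ forces $|y_3|/Q \preccurlyeq 1/Q$ and $W_3 \preccurlyeq Q^{-N}$, is $\preccurlyeq Q^{-N}$ and will be absorbed into $\mathcal{E}_1 = Q^{-1/2}(1+|y_1|+|y_2|)^{-N}$.

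I will then establish the derivative bound on $F_w$ (before multiplication by the phase) by differentiating under the integral via the chain rule, using $y_1' = y_1(z^2+1)$, $y_2' = y_2(z^2+1)$, and $y_3' = -(y_1+y_2)(z^2+1)$. The operators $y_1 \partial_{y_1}$ and $y_2 \partial_{y_2}$ produce a bounded number of terms, each a product of rescaled logarithmic Whittaker derivatives; in the support of $F_w$ the chain-rule factors $y_1'/y_3'$ and $y_2'/y_3'$ arising from $W_3$ remain $\preccurlyeq 1$. The terminal $(|y_2|/Q)^{3/4}$ decay as $|y_2|/Q \to 0$ (within the region $|y_2| \gg 1$) comes from the $|y_3/Q|^{N}$ factor in the $W_3$-bound, which surpasses the required $3/4$ exponent for $N$ large.

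Finally, I will fold in the phase. In the main regime $|y_2| \asymp Q$, $|y_1| \preccurlyeq 1$, one has $y_1/y_2 \preccurlyeq 1/Q$, so $\sqrt{Q}\,\Psi(y_1/y_2) \asymp \sqrt{|y_1|} \preccurlyeq 1$, and each $(y_1 \partial_{y_1})^{j_1}(y_2 \partial_{y_2})^{j_2}$ applied to the phase is $\asymp \bigl(\sqrt{Q |y_1/y_2|}\bigr)^{j_1+j_2} \preccurlyeq 1$, so the phase is genuinely flat there. Outside the main regime, when $|y_2|/Q \asymp Q^{-\alpha}$ with $\alpha > 0$, each phase derivative carries a potential loss $\asymp Q^{\alpha/2}$, but this is compensated by an extra factor of $(|y_2|/Q)^{N} \asymp Q^{-N\alpha}$ supplied by the $W_3$-bound in this regime; taking $N$ in Lemma \ref{lem:newvector-W-bounds} larger than $3/4 + (j_1+j_2)/2$ absorbs the accumulated loss and keeps $\mathcal{N}_+^{(w)}$ flat. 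The principal obstacle is precisely this last trade-off between phase-derivative loss and $W_3$-decay; it is quantitative rather than conceptual and reduces to a choice of $N$ depending on the order of differentiation.
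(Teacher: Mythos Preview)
Your proposal is correct and follows essentially the same route as the paper: the matrix identity rewriting $W_2(a(y_2')wn(z)) = W_2^0(a(y_2'/Q)\,wn(z/Q))$, the bounds from Lemma~\ref{lem:unram-W-bounds} on $W_1^0, W_2^0$ and from Lemma~\ref{lem:newvector-W-bounds} on $W_3$, and the observation that the phase $e(\pm 2\sqrt{Q}\,\Psi(y_1/y_2))$ is flat on the effective support $|y_1|\preccurlyeq 1$, $|y_2|\asymp Q$. The only cosmetic differences are that the paper absorbs the contribution outside the window $Q^{1-\eps}\ll |y_2|\ll Q^{1+\eps}$ directly into $\mathcal{E}_3$ rather than carrying out your phase-loss versus $W_3$-decay trade-off, and places the negligible small-$|y_2|$ residual in $\mathcal{E}_3$ rather than $\mathcal{E}_1$.
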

\begin{proof}
  By the matrix identity $$a(y_2') w n(z) a(Q) = a ( y_2'/Q) w
  n(z/Q) \in G$$
  and the relations $W_1 = W_1^0, W_2 = a(Q) W_2^0$,
  we write $ F_w(y_1,y_2)$ as
   \begin{multline*}
      \int_{z \in \mathbb{R}}
   W_1^0\big(a(y_1' )w n(z)\big)
    W_2^0\Big(a\Big(\frac{y'_2}{Q}\Big) wn\Big(\frac{z}{Q}\Big)\Big)\\ \times 
    W_3\big(a(y_3') w n(z)\big)
    \phi_{0}(\arctan(z)) \, \frac{d z}{z^2 + 1}.
  \end{multline*}
   We use 
  \eqref{eqn:unram-W-bound-near-w} to bound the first two factors of the integrand as    
  \begin{multline}\label{eqn:bounds-for-V-prop-F-w}
   (y_1 \partial_{y_1})^{j_1}
    (y_2 \partial_{y_2})^{j_2}
    W_1^0\big(a(y_1' )w n(z)\big)
    W_2^0\Big(a\Big(\frac{y'_2}{Q}\Big)w n\Big(\frac{z}{Q}\Big)\Big) \\     \ll
    (1 +  |y_1|)^{-N}  \Big(1 +  \frac{|y_2|}{Q}\Big)^{-N} 
  \end{multline}
uniformly for $z \ll 1$. We bound the third factor by \eqref{eqn:newvector-W-bound-1} 
with  $x = y_3'/Q$ getting
$$ (y_3 \partial_{y_3})^{j}  W_3\big(a(y_3') w n(z)\big) \ll \min\Big(\Big|\frac{y_3}{Q}\Big|^{-N}, \Big|\frac{y_3}{Q}\Big|^N + Q^{-N}\Big).$$
Recalling \eqref{eqn:y1-y2-y3-sum-zero}, it is easy to see that 
$$
(y_1 \partial_{y_1})^{j_1}
    (y_2 \partial_{y_2})^{j_2}F_w(y_1,y_2) \ll   (1 + |y_1|)^{-N}  \min\Big(\Big|\frac{y_2}{Q}\Big|^{-N},  \frac{1+|y_2|^N}{Q^N}   \Big). $$
  If $\Psi$ satisfies \eqref{psi}, then $e( \pm2 \sqrt{Q\Psi(y_1/y_2)})$ is flat for $y_1 \preccurlyeq 1$ and $Q^{1-\eps} \ll |y_2| \ll Q^{1+\eps}$, so up to a contribution that can go into $\mathcal{E}_3$  we have   
  
\begin{multline*}
(y_1 \partial_{y_1})^{j_1}
    (y_2 \partial_{y_2})^{j_2}e( \pm2 \sqrt{Q\Psi(y_1/y_2)}) F_w(y_1,y_2) \\
    \preccurlyeq   (1 + |y_1|)^{-N}  \min\Big(\Big|\frac{y_2}{Q}\Big|^{-N},  \frac{1+|y_2|^N}{Q^N}   \Big)
    \end{multline*}
   which is admissible for Theorem \ref{thm:triple-whittaker}. 
\end{proof}

\begin{prop}
 The function  $F_{1}^{\flat}$ satisfies
  the conclusions of
  Theorem \ref{thm:triple-whittaker}.
\end{prop}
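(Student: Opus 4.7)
The plan is to show
\[
  |F_1^\flat(y_1,y_2)| \preccurlyeq Q^{-1}(1+|y_1|)^{-N}(1+|Qy_2|)^{-N},
\]
together with an analogous estimate for the $(y_1\partial_{y_1})^{j_1}(y_2\partial_{y_2})^{j_2}$ derivatives, which is in turn dominated by $\mathcal{E}_2$; the proposition then follows by taking $\mathcal{N}_\pm = 0$ on the contribution $F_1^\flat$.

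The first step is to apply the $\PGL_2(\R)$-identity $n'(z)a(Q) = a(Q)n'(zQ)$ together with the relation $W_2 = a(Q) W_2^0$ to rewrite
\[
  W_2(a(y_2')n'(z)) = W_2^0(a(y_2'Q)n'(zQ)).
\]
On the support of $\phi^\flat$ we have $|z| \ll 1/Q$, hence $|zQ| \ll 1$ and $z^2+1 = 1 + O(Q^{-2})$. Applying \eqref{eqn:unram-W-bound-near-1} to the fixed vectors $W_1^0, W_2^0$ (whose Sobolev norms are fixed) and separately to $W_3$ (using that $\pi_3$ is tempered and that the analytic newvector $W_3$ has Sobolev norms $\mathcal{S}_d(W_3)$ bounded polynomially in $Q$) yields pointwise bounds
\[
  |W_1^0(a(y_1')n'(z))| \ll \min(|y_1|^{1/2-\vartheta}, |y_1|^{-N}),
\]
\[
  |W_2^0(a(y_2'Q)n'(zQ))| \ll \min(|Qy_2|^{1/2-\vartheta}, |Qy_2|^{-N}),
\]
\[
  |W_3(a(y_3')n'(z))| \preccurlyeq \min(|y_3|^{1/2}, |y_3|^{-N}).
\]

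Multiplying these three estimates, integrating over $z$ whose support has Lebesgue measure $O(1/Q)$, and noting that the last factor $\min(|y_3|^{1/2}, |y_3|^{-N}) \ll 1$ is harmless (since $|y_3| = |y_1+y_2|$ is bounded whenever $|y_1|$ and $|Qy_2|$ both are), one obtains the pointwise bound claimed above. Using $(1+a)(1+b) \ge 1+a+b$, this fits inside $\mathcal{E}_2(y_1, y_2) = Q^c(1+|y_1|+|Qy_2|)^{-N}$ with a suitable choice of $c$ after mildly inflating $N$. The derivative bounds are obtained by differentiating under the integral sign: each $y_j \partial_{y_j}$ either acts as a Cartan Lie algebra element on one of the $W_i$ (yielding a new smooth Whittaker vector with comparable Sobolev bounds) or brings out, via $y_3 = -(y_1+y_2)$, a factor of the form $y_1/y_3$ or $y_2/y_3$. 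The potential $y_3 \to 0$ singularity is spurious because the analytic-newvector invariance of $v_3$ under $K_0(Q,\tau)$ effectively localizes the integrand to $y_3 \asymp 1$, where $|y_j/y_3| \preccurlyeq 1$. The main obstacle is therefore the uniform polynomial-in-$Q$ control of the Sobolev norms of $v_3$ and its Lie algebra translates, which is supplied by the analytic-newvector formalism.
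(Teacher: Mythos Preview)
Your approach is essentially the same as the paper's: bound $W_1^0$ and $W_2^0$ via \eqref{eqn:unram-W-bound-near-1}, control $W_3$ only polynomially in $Q$, and absorb everything into $\mathcal{E}_2$ after integrating over the short $z$-interval of length $O(1/Q)$. Two minor remarks. First, your displayed bound $|W_3(a(y_3')n'(z))| \preccurlyeq \min(|y_3|^{1/2},|y_3|^{-N})$ is stated with the wrong symbol: Lemma~\ref{lem:unram-W-bounds} gives this only up to $\mathcal{S}_d(W_3) \ll Q^{d'}$, a genuine fixed power of $Q$, not $Q^\eps$. You clearly understand this, since you then absorb it into the $Q^c$ of $\mathcal{E}_2$, but the intermediate display with $\preccurlyeq$ is false as written. (The paper simply uses the crude sup-norm bound $\|W_3\|_\infty \ll Q^{O(1)}$ here, which is all that is needed.) Second, your entire paragraph on derivative bounds is unnecessary: Theorem~\ref{thm:triple-whittaker} imposes derivative control only on $\mathcal{N}_\pm$, while $\mathcal{E}_1,\mathcal{E}_2,\mathcal{E}_3$ are purely pointwise error terms. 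Since you put $F_1^\flat$ entirely into $\mathcal{E}_2$, no differentiation is required, and the somewhat delicate discussion of the $y_3\to 0$ behaviour can be dropped.
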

\begin{proof}
   
  By definition,
  \begin{equation*}
    F_1^{\flat }(y_1,y_2)
    =
    \int_{z \in \mathbb{R}}
    \Big(
      \prod_{j=1,2,3}
      W_j\big(a(y_j') n'(z)\big)
    \Big)
    \phi^{\flat}(z) \phi_0(\arctan(z)) \, \frac{d z}{z^2 + 1}.
  \end{equation*}
  For $|z| \ll 1/Q$,
  we have
  \begin{equation*}\label{eqn:bounding-F1-W1}
  \begin{split}
 &   W_1(a(y_1') n'(z)) \ll   (1 +  |y_1|)^{-N}, \\
&    W_2\big(a(y_2') n'(z)\big)  = 
    W^0_2\big(a(Qy_2') n'(Qz)\big) \ll  (1 +  |y_2| Q)^{-N},\\
  \end{split}
  \end{equation*}
  where  we used   \eqref{eqn:unram-W-bound-near-1}.
  The required estimate
  $F_1^{\flat } \ll \mathcal{E}_2 \leq \mathcal{E}$
  follows now from the weak \emph{a priori}  bound
  $\|W_3\|_{\infty}  \ll Q^{O(1)}$.
  
  To deduce the latter
  it suffices by the Iwasawa
  decomposition to estimate
  $W_3(a(y) k(\theta))$.
  We appeal first
  to
  Lemma \ref{lem:unram-W-bounds},
  which gives 
  for any $W \in \pi_3$
  the estimate
  $W(a(y)) \ll \mathcal{S}_d(W)$
  for some fixed  $d$.
  Taking $W = k(\theta) W_3$,
  we obtain
  $W(a(y)) = W_3(a(y) k(\theta)) \ll \mathcal{S}_d(W) = \mathcal{S}_d(W_3)$.
  On the other hand,
  by
  \cite[\S3.2.5]{MV} and the fact
  that $y \mapsto W_3(a(y))$ is a fixed bump function,
  we have
  $\mathcal{S}_d(W_3) \ll  Q^{d'}$
  for some fixed $d'$.
  The required \emph{a priori} bound follows.
  \end{proof}

\subsection{The critical case: estimates for $F_1^{\sharp}$} \label{sec:estimates-f_1sharp}
\begin{prop}
 The function  $F_1^\sharp$ satisfies the conclusions of Theorem \ref{thm:triple-whittaker}.
\end{prop}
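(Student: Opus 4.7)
The plan is to extract the $\sqrt Q$-scale oscillation hidden in $W_3$ via the local functional equation, and then to apply stationary phase twice to isolate the main term.

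First, since $\phi^\sharp$ is supported away from $z=0$, I would apply the Bruhat decomposition $n'(z)=n(1/z)a(1/z^2)wn(1/z)$ to rewrite each factor
\begin{equation*}
W_j(a(y_j')n'(z))=e(y_j'/z)W_j(a(y_j(1+1/z^2))wn(1/z)),\quad j=1,2,3.
\end{equation*}
The sum-zero condition $y_1+y_2+y_3=0$ forces the product of the three additive characters to collapse to $1$.  Substituting $u=1/z$, which ranges over $1\ll|u|\preccurlyeq Q$, and using the identity $wn(u)a(Q)=a(1/Q)wn(u/Q)$ to redistribute the $a(Q)$-shift inside $W_2$, one obtains
\begin{multline*}
F_1^\sharp(y_1,y_2)=\int_{\R}W_1^0\bigl(a(y_1(1+u^2))wn(u)\bigr)\\
\times W_2^0\bigl(a(\tfrac{y_2}{Q}(1+u^2))wn(\tfrac{u}{Q})\bigr)\,W_3\bigl(a(y_3(1+u^2))wn(u)\bigr)\,\tilde\phi(u)\,du
\end{multline*}
for a nice weight $\tilde\phi$. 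Since $|u/Q|\preccurlyeq 1$, the $W_2^0$-factor is controlled directly by Lemma \ref{lem:unram-W-bounds} and forces the effective support $|(y_2/Q)(1+u^2)|\asymp 1$, i.e.\ $|u|\asymp\sqrt{Q/y_2}$. Iterating the Bruhat identity in the form $wn(u)=a(1/u^2)n(-u)n'(1/u)$ on the $W_1^0$-factor extracts a linear phase $e(-y_1(1+u^2)/u)$ times a function supported on $|y_1|\asymp 1$ and controlled again by Lemma \ref{lem:unram-W-bounds} (now applied at the small argument $n'(1/u)$).

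Second, I would apply the Mellin expansion \eqref{eqn:mellin-expansion-W-aywnz} to $W_3$:
\begin{equation*}
W_3\bigl(a(y_3(1+u^2))wn(u)\bigr)=\int_{\Re(\chi)=0}\chi(y_3(1+u^2))\,\gamma(\tfrac12,\pi_3\otimes\chi)\,\hat W_3(\chi,u)\,d\chi,
\end{equation*}
with $\hat W_3(\chi,u)=\int_{\R^\times}W_3(a(t))e(tu)\chi(t)\,d^\times t$ a Fourier--Mellin transform of the fixed Kirillov bump. By Lemma \ref{lem:gamma-s-rho-via-conductors} together with \eqref{maass}/\eqref{hol} we have $\gamma(\tfrac12,\pi_3\otimes\chi)=C(\pi_3)^{-i\tau}\exp(i\phi_{\pi_3}(\tau))g_0(\tau,\pi_3)$ to leading order, which produces the key $Q^{-i\tau}$ oscillation. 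Together with the bump structure of $W_3(a(t))$, this turns the integrand into a multiple integral in $(u,\tau,t)$ against a phase of size $\sqrt Q$ with flat amplitudes, in the sense of \S\ref{weight}.

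Third, I would execute stationary phase (Lemma \ref{lem2}) in consecutive stages. The $t$-integral inside $\hat W_3$ localizes at $t^\ast=-\tau/(2\pi u)$, forcing $|\tau|\asymp|u|$. The $\tau$-integral then has phase $\tau\log|y_3(1+u^2)/Q|+\phi_{\pi_3}(\tau)+O(1)$, with stationary point $\tau^\ast$ solving $\phi_{\pi_3}'(\tau^\ast)=-\log|y_3(1+u^2)/Q|$; since $\phi_{\pi_3}''(\tau)\asymp Q^{-1/2}$ in the generic range by \eqref{maass}/\eqref{hol}, Lemma \ref{lem2} contributes a Jacobian of size $Q^{1/4}$. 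The remaining $u$-integral, combined with the linear phase $e(-y_1(1+u^2)/u)$ extracted earlier, has a phase whose unique interior critical point $u^\ast$ depends only on the ratio $y_1/y_2$; its critical value has the form $\pm 2\sqrt Q\,\Psi(y_1/y_2)$ for an explicit $\Psi$, and a direct Taylor expansion around the generic regime $y_1/y_2\to 0$ yields $\Psi(y)=|y|^{1/2}+O(|y|^{3/2})$, matching \eqref{psi}. The amplitude bound $\mathcal N_\pm\preccurlyeq(y_2/Q)^{3/4}$ follows by combining the stationary-phase Jacobians with the base normalization of the three Whittaker factors.

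The main obstacle will be to carry out this stationary-phase analysis uniformly in $(y_1,y_2)$, and in particular to verify the flatness of $\mathcal N_\pm$ in the boundary regime where $y_2/Q$ is small and the stationary points $(\tau^\ast,u^\ast)$ drift toward the edge of the $\tilde\phi$-support. This requires combining Lemma \ref{lem1} (for the negligible tails, to be absorbed into the error $\mathcal E_3$) with the Mellin analysis behind Lemma \ref{lem:newvector-W-bounds} in the transition regime, together with the derivative estimates \eqref{eqn:g-estimates}/\eqref{eqn:g-C-estimates} on the Stirling factors to propagate flatness through the chain rule. The discrete-series and principal-series cases of $\pi_3$ are handled uniformly since both satisfy \eqref{eqn:gamma-s-pi-approximation}, differing only in the precise shape of $\phi_{\pi_3}$ and not in the macroscopic stationary-phase structure.
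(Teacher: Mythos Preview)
Your approach coincides with the paper's: Bruhat coordinates, the Mellin expansion \eqref{eqn:mellin-expansion-W-aywnz} for $W_3$, and three successive stationary-phase evaluations. The paper, however, does these in the order $t$, $z$ (your $u$), then $t_\chi$ (your $\tau$), not $t,\tau,u$; this is not cosmetic, since after the $z$-integral the remaining $t_\chi$-phase is fully explicit and its stationary point $|t_\chi^\ast|=r|y_1/y_2|^{1/2}$ (principal series case) solves algebraically, from which $\Psi$ is read off in closed form, whereas in your order $\tau^\ast(u)$ is only implicitly defined and the subsequent $u$-integral becomes correspondingly opaque. Your written $\tau$-phase is also incomplete: the $t$-stationary point contributes a critical-value phase $\tau\log|\tau/(2\pi e u)|$ which you have dropped into ``$O(1)$'', and it is precisely the $\tau\log|\tau|$ piece (not $\phi_{\pi_3}$) that dominates the second derivative, giving $\partial_\tau^2\asymp 1/|\tau|\asymp 1/Z$ rather than $Q^{-1/2}$. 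Finally, the localization $|y_2|\asymp Q/Z^2$ is not forced by the support of $W_2^0$ (that factor only gives an upper cutoff $|y_2|\preccurlyeq Q/Z^2$) but emerges from the final stationary phase; the paper first disposes of the edge ranges $Z\succcurlyeq Q^{1/2}$ (trivially, into $\mathcal{E}_1$) and $Z\preccurlyeq 1$ (via Lemma \ref{lem:newvector-W-bounds}, as in the proof of Proposition \ref{prop12}) before entering the regime $Q^\eps\leq Z\leq Q^{1/2-\eps}$ where stationary phase is actually applied.
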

\begin{proof}
Let $Q^{-1} \ll z$. We use the identity
  \begin{equation*}
    a(y) n'(z) a(Q) = n(y/z) a(y/Qz^2) w n(1/Qz)\in G 
  \end{equation*}
  to write
  \[
    W_2(a(y_2') n'(z))
    = e(y_2'/z) W_2^0\big(a(y_2'/Q z^2) w n(1/Q z)\big).
  \]
Similarly, we have 
\begin{align*}
    W_3(a(y_3') n'(z))
    &=
      W_3(n(y_3'/z) a(y_3'/z^2) w n(1/z)) \\
    &=  e(y_3'/z) W_3(a(y_3'/z^2) w n(1/z)). 
  \end{align*}
  Using the consequence $e(y_2'/z) e(y_3'/z) = e(-y_1'/z)$
  of the hypothesis \eqref{eqn:y1-y2-y3-sum-zero},
  we may
  write
  $F_1^\sharp(y_1,y_2)$
  in the form
  \begin{multline*}
    \int_{z \in \mathbb{R}}
    W \left(y_1,\frac{y_2}{Q z^2}, z \right)
    e \left(-\frac{y_1 (z^2 + 1) }{ z} \right)\\
    \times W_3\Big(a\Big(\frac{y_3'}{z^2}\Big) w n\Big(\frac{1}{z}\Big) \Big)
    \phi^\sharp(z)
    \phi_0(\arctan(z)) \, \frac{d z}{z^2 + 1}
    \end{multline*}
  where
  \begin{equation*}
    W(x_1,x_2, z)
    :=
    W_1^0\big(a(x_1(1 + z^2)) n'(z)\big)
    W_2^0\big(a(x_2(1+z^2)) w n(1/Q z)\big).
  \end{equation*}
 
  From now on we will use extensively the conventions on smooth weight functions stated in \S \ref{weight}. In particular, $V$ denotes generally a flat function, \emph{not necessarily the same at every occurrence}.  
  
    We apply the substitution $z \mapsto 1/z$ and a
  smooth dyadic partition of unity localizing $\pm z \asymp Z$
  where $Z$ runs over $\preccurlyeq 1$ values (e.g.\ powers of
  2) satisfying $1 \ll Z \ll Q.$  For notational simplicity we
  restrict to $z > 0$, the case $z < 0$ being essentially
  identical.  Since $W$ is flat in $z$,  it suffices to estimate 
     \begin{equation}\label{eqn:suffice}
  \begin{split}
& I_0(y_1, y_2, y_3) =  W \left(y_1,  \frac{y_2 Z^2}{Q} \right)\\
&\times 
    \int_{z \in \mathbb{R}}
    e \left(-y_1 (z + z^{-1}) \right)    W_3\big(a(y_3 {(1 +
    z^{2})}) 
    w n(z)\big)
    V \left( \frac{z}{Z} \right)
    \, \frac{d z}{|z|^2}
    \end{split}
  \end{equation}
  where $W$ satisfies the estimates
  \begin{equation}\label{boundw}
    (x_1 \partial_{x_1})^{j_1} (x_2 \partial_{x_2})^{j_2} W(x_1, x_2) \ll \prod_{j=1}^2(1 + |x_j|)^{-N}.  
  \end{equation}
  At this point we can easily deal with the contribution $Z \gg Q^{1/2 - \eps}$. A trivial estimate returns the bound
  $$\preccurlyeq Q^{-1/2}(1 + |y_1|)^{-N}  (1 + |y_2|)^{-N} = \mathcal{E}_1(y_1, y_2)$$
  which is acceptable. 
  
  We can also easily deal with the contribution $Z \preccurlyeq 1$. In this case, \eqref{eqn:newvector-W-bound-1} yields
  $$(y_3 \partial_{y_3})^j W_3\big(a(y_3{ (1 +
    z^{2})}) 
    w n(z)\big) \ll \min\left(\Big( \frac{|y_3|}{Q}\Big)^N + Q^{-N}, \Big( \frac{|y_3|}{Q}\Big)^{-N}\right).
$$
 Recalling \eqref{eqn:y1-y2-y3-sum-zero}, we argue as in the proof of Proposition  \ref{prop12} that under the present assumption $Z \preccurlyeq 1$, up to an error of size $\mathcal{E}_3$, we have
 $$(y_1 \partial_{y_1})^{j_1} (y_2 \partial_{y_2})^{j_2} I_0(y_1, y_2, y_3) \preccurlyeq (1 + |y_1|)^{-N} \min\left(\Big|\frac{y_2}{Q}\Big|^{-N},  \frac{1+|y_2|^N}{Q^N}   \right)$$
 but then also 
\begin{multline*}
(y_1 \partial_{y_1})^{j_1} (y_2 \partial_{y_2})^{j_2}e( \pm2 \sqrt{Q\Psi(y_1/y_2)}) I_0(y_1, y_2, y_3)\\ \preccurlyeq (1 + |y_1|)^{-N} \min\left(\Big|\frac{y_2}{Q}\Big|^{-N},  \frac{1+|y_2|^N}{Q^N}   \right)
\end{multline*}
  as required for the bound for $\mathcal{N}_{\pm}$. So from now on we assume
  $$Q^{\eps} \leq Z \leq Q^{1/2 - \eps}.$$

  We appeal to the (rapidly-convergent) integral formula
  \eqref{eqn:mellin-expansion-W-aywnz}    to rewrite the  integral over $z$ in \eqref{eqn:suffice} 
  as 
  \begin{multline}\label{eqn:defn-I-trip-whit}
    I :=
    \int_{\Re(\chi)=0}
    \gamma(1/2, \pi_3 \otimes \chi)\\
    \times\int_{z \in \mathbb{R}}
    \chi(y_3  ( z^2 + 1))
    e \left( - y_1 (z  +z^{-1}) \right)
  V\left( \frac{z}{Z} \right)
    I_1(z,\chi)
    \, \frac{d z}{|z|^2}
    \, d \chi 
  \end{multline}
  where
  \begin{equation*}
    I_1(z,\chi) :=
      \int_{t \in \mathbb{R}}
      e(t z) \chi(t) W_3(a(t))
      \, \frac{d t}{|t|}.
    \end{equation*}
We keep in mind that the bound \eqref{boundw} allows us to assume $y_1 \preccurlyeq  1$, otherwise we can bound $I$ trivially by $\mathcal{E}_3$. 
  
    Let us write $\chi = |.|^{it_\chi} \sgn^{a_\chi}$
    with $t_\chi \in \mathbb{R}$
    and $a_\chi \in \{0,1\}$.
    We split the $\chi$-integral
    in \eqref{eqn:defn-I-trip-whit}
    according to the value of $a_\chi$
    and regard that value as fixed from now on.   We have expressed $I$ as a triple integral in $t, z, t_{\chi}$, and we will apply stationary phase in each of the variables, one at a time. Here $z\asymp Z$, $t \asymp 1$, and we will see in a moment that the $t$-integral is negligible unless $t_{\chi} \asymp Z$. Stationary phase saves a factor $Z^{1/2}$ in each of these integrals, so we expect that $I$ is of size $Z^{-3/2}$, and we can explicitly compute the oscillatory behavior.  
    
    \medskip

 \textbf{Step 1: the $t$-integral.}   We  apply stationary phase analysis to find, for each fixed
  $N \in \mathbb{Z}_{\geq 0}$, a nice function $V$ so that
    \begin{equation}\label{eqn:V-z-chi-estimate}
      I_1(z,\chi) =
      z^{-1/2}
      V \left( \frac{-t_\chi }{z}, \frac{z}{Z} \right)
      \chi \left( \frac{t_\chi }{2 \pi e z} \right)
      + O(Q^{-N}).
    \end{equation}
    To see this, we apply Lemma \ref{lem2} if $-t_{\chi}/z
    \asymp  1$ by taking 
      \begin{displaymath}
    \begin{split}
    & \phi(t)= \phi(t; z, t_{\chi}) =  2 \pi z t + t_\chi \log |t|, \quad t^{\ast} = -\frac{t_\chi}{2 \pi z},\\
     & (X_1, X_2, X_3) = (1, Z, Z), \quad Y = Z
     \end{split}
     \end{displaymath}
    and otherwise the integral is negligible by Lemma \ref{lem1} with $U=P=1$, $S=Y=Z$.  Here and henceforth the contribution of all negligible error terms is covered by $\mathcal{E}_3$ in the statement of Theorem \ref{thm:triple-whittaker}. 
        We deduce that $I$ is given up
    to acceptable error (that can go into $\mathcal{E}_3$) by
    \begin{equation*}
    \begin{split}
  &    \int_{\Re(\chi)=0}
      \gamma(1/2, \pi_3 \otimes \chi)
      \chi \left( \frac{t_\chi y_3}{2 \pi e} \right)\\
      &\times 
      \int_{z \in \mathbb{R}}
      \chi(z)
      e  ( - y_1 z)  
     V \left( \frac{z}{Z} ,  \frac{-t_\chi }{Z} \right) \chi(1 + z^{-2})e(-y_1z^{-1})
      \, \frac{d z}{|z|^{5/2}}
      \, d \chi.
      \end{split}
    \end{equation*}
    Note that for $|t_{\chi}| \asymp z \geq Q^{\eps}$ and $y_1 \leq Q^{\eps}$ the function $\chi(1 + z^{-2})e(-y_1z^{-1})$ is flat in both $z$ and $t_{\chi}$, so we can incorporate this factor into $V$ (and continue to call this new function $V$). 
    Thus we may reduce further to estimating
    \begin{equation}\label{eqn:the-integral-I-after-evaluating-z-integral}
      \int_{\Re(\chi)=0}
      V \left( \frac{-t_\chi }{Z} \right)
      \gamma(1/2, \pi_3 \otimes \chi)
      \chi \left( \frac{t_\chi y_3}{2 \pi e} \right)      I_2(\chi,y_1)
      \, d \chi
    \end{equation}
    with
    \begin{equation*}
      I_2(\chi,y_1)
      :=
      \int_{z \in \mathbb{R}}
      V \left( \frac{z}{Z} \right)
      \chi(z )
      e \left( - y_1 z \right)
       \frac{d z}{|z|^{5/2}}. 
    \end{equation*}
    
    \medskip
        
    \textbf{Step 2: the $z$-integral.} By stationary phase analysis,
    we may find for each fixed $N$
    a  nice function $V $ (again potentially different from previous versions of $V$)
    so that
     \begin{equation}\label{eqn:approx-I-2-via-V-6}
      I_2(\chi,y_1)
      =
      Z^{-2} V\Big(  \frac{t_\chi }{Z y_1} \Big)
      \chi \left( \frac{t_\chi }{2 \pi e y_1} \right)
      + O(Q^{-N}),
    \end{equation}
 whenever $-t_{\chi} \asymp Z$. 
 We apply Lemma \ref{lem2},  taking
 \begin{displaymath}
\begin{split}
& \phi(t) = \phi(t; y_1, t_{\chi}) =  - 2 \pi y_1 t + t_\chi \log|t|, \quad t^{\ast} =   \frac{t_\chi }{ 2 \pi y_1}, \\
& (X_1, X_2,X_3) = (Z,  1, Z), \quad Y = Z
\end{split}
\end{displaymath}
   (if $|y_1| \asymp 1$, and otherwise the integral is negligible by Lemma \ref{lem1} with $U=Y=P = Z$, $S = 1$). 
    Thus  we reduce to studying
      \begin{equation}\label{finalint}
      Z^{-2}
      V(-y_1)
      \int_{\Re(\chi)=0}
      V \left( \frac{-t_\chi }{Z} \right)
      \gamma(1/2, \pi_3 \otimes \chi)
      \chi \left( \frac{t_\chi^2 y_3 }{(2 \pi e)^2 y_1} \right)
      \, d \chi,
   \end{equation}
   for certain nice functions $V$. 
  
  \medskip
  
  \textbf{Step 3: the $t_{\chi}$-integral.}  We now evaluate $\gamma(1/2, \pi_3 \otimes \chi)$ asymptotically 
    using Stirling's formula, and appeal in particular to \eqref{eqn:gamma-s-pi-approximation} 
and the subsequent explicit formul\ae.     
   Since $|t_{\chi}| \asymp Z \ll Q^{1/2 - \eps}$, both are
   applicable. We start with the principal series case.
   In view
   of \eqref{maass}, the relevant phase function
   may be written
\begin{displaymath}
\begin{split}
&\phi(t_{\chi})  = \phi(t_{\chi}; r, y_1, y_3) \\
&= t_{\chi}\log\Big|   \frac{t_\chi^2 y_3 }{(2 \pi e)^2 y_1}\Big| +  (r -t_{\chi}) \log (r-t_{\chi}) - (r+t_{\chi}) \log(r + t_{\chi}) + 2t_{\chi}\log (2\pi e) 
\end{split}
\end{displaymath}
   For convenience, we record some relevant derivatives:
   \begin{displaymath}
     \begin{split}
       &\frac{\partial}{\partial t_{\chi}} \phi (...) = \log \Big|\frac{t_{\chi}^2 y_3}{y_1(r^2 - t_{\chi}^2)}\Big|, \quad  \frac{\partial}{\partial r}  \phi(...) = \log \frac{r-t_\chi }{r+t_\chi }, \\
       &  \frac{\partial}{\partial y_1}   \phi(...) =-\frac{t_\chi }{y_1}, \quad  \frac{\partial}{\partial y_3} \phi(...) = \frac{t_\chi }{y_3},\\
       & \frac{\partial^2}{\partial t^2_{\chi}} \phi(...) =  \frac{2r^2}{t_{\chi}(r^2 - t_{\chi}^2)}, \quad \frac{\partial^2}{\partial r^2} \phi(...) = \frac{2 t_\chi }{r^2 - t_\chi ^2}, \quad  \frac{\partial^2}{\partial r \partial t_\chi } \phi(...) = -\frac{2r}{r^2 - t_\chi ^2}.
     \end{split}
   \end{displaymath}
   Using \eqref{eqn:y1-y2-y3-sum-zero},
   we deduce that
   $$|t^{\ast}_{\chi}| =  r |y_1/y_2|^{1/2}. $$
   The integral \eqref{finalint}
   is negligible by an application of Lemma \ref{lem1} with $U=Y=P=Z$, $S=1$ unless $\sgn(y_1) = - \sgn(y_2)$ and $|y_2| \asymp Q/Z^2$. In this case, we apply  Lemma \ref{lem2}  with
   $$ (X_1, X_2, X_3, X_4) = (Z, Q^{1/2}, 1, Q/Z^2), \quad Y = Z. $$
   We compute
 $$\phi\Big( r \Big|\frac{y_1}{y_2}\Big|^{1/2}; r, y_1, y_3\Big) =2 r \Psi\Big(\frac{y_1}{y_2}\Big), \quad \Psi(y) = |y|^{1/2} \arctanh(|y|) -  \arctanh(|y|^{1/2})  $$
 where
 \begin{equation}\label{psi1}
   \Psi(y)  = -|y|^{1/2} + O\big(|y|^{3/2}\big), \quad \Psi^{j}(y) \ll |y|^{1/2 - j}.
 \end{equation}
 
 The analysis in the discrete series case is similar. We write
 $\kappa = (k-1)/2$ and  apply Lemma \ref{lem2} and \eqref{hol}
 with
\begin{displaymath}
     \begin{split}
       & \phi(t_{\chi})  = \phi(t_{\chi}; \kappa, y_1, y_3)= t_{\chi}\log\Big|   \frac{t_\chi^2 y_3 }{e^2 y_1 \kappa^2}\Big|  - 2\kappa\arctan\frac{t_{\chi}}{\kappa} - t_{\chi} \log \frac{\kappa^2 + t_{\chi}^2}{e^2\kappa^2} ,\\
       & \frac{\partial}{\partial t_{\chi}}\phi(...) = \log \Big|\frac{t_{\chi}^2 y_3}{y_1(\kappa^2 + t_\chi^2)}\Big|, \quad  \frac{\partial}{\partial \kappa}\phi(...) =- 2\arctan\frac{t_\chi}{\kappa}, \quad \frac{\partial}{\partial y_1}\phi(...) =-  \frac{t_\chi}{y_1}, \\
       & \frac{\partial}{\partial y_3}\phi(...) =  \frac{t_\chi}{y_3}, \quad \frac{\partial^2}{\partial t_{\chi}^2} \phi(t_{\chi}) =  \frac{2\kappa^2  }{t_{\chi}(\kappa^2 + t_{\chi}^2)}. 
     \end{split}
   \end{displaymath}
 We have $\phi^{(1, 0, 0, 0)}(t^{\ast}_{\chi}; \kappa, y_1, y_3) = 0$ if and only if  $\sgn(y_1) = \sgn(y_3)$ and  $$|t^{\ast}_{\chi}| =  \kappa \sqrt{\frac{y_1}{y_3 - y_1}} $$
 Again this is in the support of the integrand if and only if
 $|y_2| \asymp Q/Z^2$, otherwise the integral is negligible. We
 compute
  \begin{displaymath}
   \begin{split}
     & \phi\left(\kappa\sqrt{\frac{y_1}{y_3 - y_1}}; \kappa, y_1, y_3\right) =-2\kappa\arctan\sqrt{\frac{y_1}{y_3 - y_1}} =  2\kappa\tilde{\Psi}\Big(\frac{y_1}{y_2}\Big), \\
     &\tilde{\Psi}(y) = - \arctan \left(\Big(\frac{1}{|x|} + 2\Big)^{-1/2}\right)
   \end{split}
 \end{displaymath}
 where  $\tilde{\Psi}$ satisfies the same formul\ae \, as in \eqref{psi1}.  

 By the definition of the conductor in \eqref{maass} and \eqref{hol} we conclude that \eqref{finalint} equals (up to a negligible error)
 $$Z^{-3/2} V\Big(-y_1, \frac{y_2Z^2}{Q}\Big)e\left(-2 \sqrt{Q} \Psi\Big(\frac{y_1}{y_2}\Big)\right)$$
 in the principal series case and 
 $$Z^{-3/2} V\Big(-y_1, \frac{y_2Z^2}{Q}\Big) e\left(-2 \sqrt{Q}\tilde{\Psi}  \Big(\frac{y_1}{y_2}\Big)\right)$$
 in the discrete series case. 

 We made in the beginning the assumption $z > 0$. The case $z < 0$ leads to an analogous expression, with the minus-sign in the exponential removed. This completes the proof. 
\end{proof}

\section{A shifted convolution problem}\label{shifted}

\subsection{Some preparation}

\subsubsection{Bessel functions} 
We need the following uniform asymptotic formul\ae \, for Bessel
functions. For $t \in \Bbb{R}$, $|t| > 1$ and
$x > 0$, we have  \cite[7.13.2(17)]{EMOT}   \begin{equation}\label{bessel1}
\begin{split}
 \frac{J_{2it}(2x)}{\cosh(\pi t)} &= 
\sum_{\pm}  e^{\pm 2 i   \omega(x, t)} \frac{f_N^{\pm}(x, t)}{x^{1/2} + |t|^{1/2}}  
 + O_N((x+|t|)^{-N}),  \\
& \omega(x, t) =  |t| \cdot \arcsinh \frac{|t|}{x} - \sqrt{t^2 + x^2},
\end{split}
\end{equation}
where for any fixed $N> 0$ the function $f_N^{\pm}$ is flat.  
 The
error term estimate
stated
in \cite{EMOT} is $O(x^{-N})$, but for  $x \leq t^{1/3}$, say, the
estimate $O(|t|^{-N})$ follows from the power series expansion
\cite[8.402]{GR} of $J_{2it}(x)$.
When we apply this formula in practice, we first extract the
negligible error  $O_N((x + |t|)^{-N})$ in the series expansion
given by \cite[7.13.2(17)]{EMOT} and \cite[8.402]{GR},
without pausing to estimate any derivatives of that error.
We then differentiate the remaining series expansion
to verify the
flatness condition.

For future reference we note the identities
\begin{equation}\label{identities}
\begin{split}
&\frac{\partial}{\partial t} \omega(x, t) =
\arcsinh\frac{t}{x}, \quad \frac{\partial^2}{\partial t^2}
\omega(x, t) = \frac{1}{x^2 + t^2},\\
&\frac{\partial}{\partial x} \omega(x, t) = - \frac{\sqrt{x^2 +
    t^2}}{x}, \quad  \omega(x,t) = -x + \frac{t^2}{2 x} + O \left( \frac{t^4}{x^3} \right).
    \end{split}
    \end{equation}

For $|t| \geq 2x > 0$ we have
by \cite[7.13.2(19)]{EMOT} (again coupled with the power series expansion \cite[8.445 \& 8.485]{GR} for   $x \leq t^{1/3}$) or \cite[(20) with $z \leq 1/2$]{Ba}
\begin{equation}\label{bessel2}
\begin{split}
 K_{2it}(2x) \cosh(\pi t)& = 
|t|^{-1/2} \sum_{\pm}  e^{\pm 2 i   \omega^{\ast}(x, t)}  g_N^{\pm}(x, t)  
 + O_N((x+ |t|)^{-N}),  
\\
&\omega^{\ast}(x, t) =  |t| \cdot \arccosh \frac{|t|}{x} - \sqrt{t^2 - x^2},
\end{split}
\end{equation}
where for any fixed $N> 0$ the function $g_N^{\pm}$ is flat. 

Similarly, for $x \geq 2k$ we have \cite[(4.24)]{Ol}
\begin{equation}\label{bessel3}
\begin{split}
J_{2k-1}(2x)& = \sum_{\pm} e^{\pm 2ik \tilde{\omega}(x, k)} \frac{{h_N^{\pm}(x, k)}}{x^{1/2}} +   O_N((x+k)^{-N}), \\
& \tilde{\omega}(x, k) = - k \arctan  \sqrt{\frac{x^2}{k^2} - 1} +    \sqrt{ x^2 - k^2}, 
\end{split}
\end{equation}
where $h_N^{\pm}$ is flat. For fixed index, these formul\ae \, simplify greatly, and we have
\begin{equation}\label{fixed}
  \begin{split}
    \frac{J_{2it}(2x)}{\cosh(\pi t)} &= \frac{1}{x^{1/2} + x^{2|\Im t|}} \sum_{\pm}  e^{\pm 2 i   x}  f^{\pm}(x) ,\\
    J_{2k-1}(2x) &= \frac{1}{x^{1/2}+1} \sum_{\pm}  e^{\pm 2 i   x}  h^{\pm}(x) ,
    \\ K_{2it}(2x) \cosh(\pi t) & = \frac{e^{-2x}}{x^{1/2} + x^{2|\Im t|}} g(x) 
  \end{split}
\end{equation}
for  $t \in \Bbb{C}$, $k \in \Bbb{N}$, $x >
0$
with $|\Im t| \leq 1/4$ (for simplicity)
and
$t, k$ in a fixed compact set,
where $f^{\pm}, g, h^{\pm}$ can be chosen to be flat (depending
on $t$ or $k$).
See \cite[Lemma 15]{BM} for details on how to glue together the asymptotic formul\ae \, for $x > 1$ and $x < 1$. \\

\emph{Remark:}   As the referee remarked, \cite{EMOT} contains no proofs. The expansions \eqref{bessel1}, \eqref{bessel2}, \eqref{bessel3} are all relatively simple to obtain, since we are in the so-called oscillatory range away from possible degenerate points ($t = x$ for the $K$-function and $k = x$ for the $J$-function with real order). All three uniform asymptotic expansions can be obtained from the integral representations \cite[8.421.1/2, 8.405]{GR}, \cite[8.432.4]{GR} and  \cite[8.411]{GR}
\begin{equation*} 
\begin{split} 
&K_{it}(x) =  \frac{1}{2\cosh(\pi t/2)} \int_{-\infty}^{\infty} \cos(x \sinh v)\exp( it v) dv, \\
& J_{it}(x) =  \frac{1}{\pi} \int_{-\infty}^{\infty}  \big(\cosh(\pi t/2) \sin (x \cosh v) - i \sinh(\pi t/2) \cos(x\cosh v) \big)\exp( it v)dv,\\
& J_{k}(x) =   \frac{1}{2\pi i} \int_{-\pi}^{\pi} \exp(-ki\theta + ix\sin\theta)d\theta
\end{split}
\end{equation*}
by an application of Lemma \ref{lem2}. For the improper integrals, note that the tail can be estimated by partial integration using Lemma \ref{lem1}, cf.\ e.g.\ \cite[Section 4.4]{BB}.

\subsubsection{Jutila's circle method}
We quote Jutila's circle method \cite{Ju1}. 
\begin{lemma}\label{lem3}  Let $Q \geq 1$ and $V$ be a smooth, nonnegative, nonzero  function with support in $[1, 2]$. For $r \in \mathbb{Q}$ write $I_{r}(\alpha)$ for the characteristic function of the interval $[r- 1/Q, r+1/Q]$ and define
\begin{displaymath}
  \Lambda := \sum_q V\Big(\frac{q}{Q}\Big) \phi(q),  \quad I(\alpha) = \frac{Q}{2   \Lambda} \sum_q V\Big(\frac{q}{Q}\Big) 
  \sum_{\substack{d\, (\text{{\rm mod }} q)\\ (d, q) = 1}} I_{d/q}(\alpha).
\end{displaymath}
Then $I(\alpha)$ 
is a good approximation to the characteristic function on $[0, 1]$ in the sense that
\begin{displaymath}
  \int_0^1 (1 - I(\alpha))^2 d\alpha \ll_{\eps} Q^{\eps - 1}
\end{displaymath}
for any $\eps > 0$.
\end{lemma}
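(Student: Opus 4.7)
The plan is to use Parseval's identity, viewing $I$ as a $1$-periodic function on $\R$ so that the arcs $I_{d/q}$ are interpreted modulo $1$. A direct length count gives
\[
  \int_0^1 I(\alpha) \, d\alpha = \frac{Q}{2\Lambda} \sum_q V(q/Q) \phi(q) \cdot \frac{2}{Q} = 1,
\]
so expanding $(1-I)^2 = 1 - 2I + I^2$ and applying Parseval reduces the claim to
\[
  \sum_{n \neq 0} |\widehat{I}(n)|^2 \ll_\eps Q^{\eps - 1}, \qquad \widehat{I}(n) := \int_0^1 I(\alpha) e(-n\alpha) \, d\alpha.
\]

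Second, I would compute $\widehat I(n)$ explicitly. Using that the Ramanujan sum $c_q(n) := \sum_{(d,q)=1} e(dn/q)$ is real, a direct calculation yields, for $n \neq 0$,
\[
  \widehat{I}(n) = \frac{Q}{2\Lambda} \cdot \frac{\sin(2\pi n/Q)}{\pi n} \sum_q V(q/Q)\, c_q(n).
\]
This identity converts the geometric problem about Farey arcs into an arithmetic estimate for smoothly weighted Ramanujan sums.

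The third step is to bound this. Applying the trivial pointwise estimate $|c_q(n)| \leq (q,n)$ and the divisor bound $\sum_{q \ll Q} (q,n) \ll Q \tau(n)$, together with $\Lambda \asymp Q^2$ (from $\sum_{q \leq x} \phi(q) \sim (3/\pi^2) x^2$) and $|\sin(2\pi n/Q)/(\pi n)| \ll \min(1/Q, 1/|n|)$, gives
\[
  \widehat{I}(n) \ll Q^{-1} \tau(n) \min(1, Q/|n|).
\]
Squaring and splitting the sum at $|n| = Q$ then yields the desired $Q^{\eps-1}$ via the standard bounds $\sum_{n \leq Q} \tau(n)^2 \ll Q (\log Q)^3$ and $\sum_n \tau(n)^2 / n^2 < \infty$.

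The only delicate point \emph{could} have been extracting cancellation in the sum $\sum_q V(q/Q) c_q(n)$, but in fact no such cancellation is needed here: the two independent $1/Q$ savings -- one from the arc-length factor (encoded in the sinc) and one from the normalization $1/\Lambda \asymp 1/Q^2$ -- are sufficient to absorb the divisor-function losses. M\"obius cancellation via $c_q(n) = \sum_{d \mid (q,n)} d\, \mu(q/d)$ would be required only if one wished to improve the $Q^{\eps-1}$ bound to something genuinely power-saving.
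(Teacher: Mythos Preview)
The paper does not prove this lemma; it is simply quoted from Jutila \cite{Ju1} without argument. Your Parseval-based proof is correct and is in fact the standard way to establish Jutila's result: the computation of $\widehat{I}(n)$ via Ramanujan sums, the bound $|c_q(n)|\le (q,n)$, and the estimate $\Lambda\asymp Q^2$ are all accurate, and the splitting at $|n|=Q$ gives the claim.

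One small imprecision: for the tail $|n|>Q$ you invoke the convergence of $\sum_n \tau(n)^2/n^2$, but mere convergence gives only $o(1)$, not $O(Q^{\eps-1})$. What you actually need (and surely intended) is the quantitative tail bound $\sum_{n>Q}\tau(n)^2/n^2 \ll_\eps Q^{\eps-1}$, which follows immediately from $\tau(n)\ll_\eps n^{\eps/2}$ or from partial summation against $\sum_{n\le x}\tau(n)^2\ll x(\log x)^3$. With that adjustment the argument is complete.
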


\subsection{Notation and set-up}
Let $T$ be a large positive real number. We recall once again the notation and conventions from \S \ref{weight}, in particular with respect to weight functions $V$.  Moreover, $A \preccurlyeq B$ denotes $A \ll T^{\eps} B$. We consider two more parameters $M$ and $H$ satisfying  
\begin{equation}\label{MH}
M \preccurlyeq T^2, \quad H =  T^{1/3 + \eps},
\end{equation}
and  $\nu \in \Bbb{Z} \setminus \{0\}$ with $\nu \preccurlyeq 1$. The choice of $H$ will eventually turn out to be the optimal choice, and it simplifies the argument if we make it right away at this point. With slightly more extra work, we could run the same argument for $T^{1/3} \ll H \ll T^{1-\varepsilon}$.  

Let $g$ a fixed holomorphic or Maa{\ss} Hecke eigenform for ${\rm SL}_2(\Bbb{Z})$ with Hecke eigenvalues $\lambda_g(n)$ of weight $k_g$ or spectral parameter $t_g$. 
 Let  $\sigma_t(m) = \sum_{ab=n} a^{it} b^{-it}$.  We fix a choice of sign $\pm$.
With these notations we consider 
\begin{equation}\label{L1}
\begin{split}
\mathcal{L} := & \frac{1}{TM^{1/2}}\sum_{2\pi T \leq  t_j \leq 2\pi (T+H)} \frac{1}{L(\Ad^2 u_j, 1)}\\ &\quad\quad\quad\times \Big| \sum_{m } V\Big(\frac{m}{M}\Big) \lambda_j(m) \lambda_g(m+\nu) \exp\Big(\pm 2i\frac{t_j \sqrt{|\nu|}}{\sqrt{m}}\Big)\Big|^2 \\
&+ \frac{1}{TM^{1/2}}\int_{2 \pi T}^{2 \pi (T+H)}\frac{1}{|\zeta(1 + 2 i t)|^2}\\&\quad\quad\quad \times \Big|\sum_{m} V\Big( \frac{m}{M}\Big) \sigma_t(m) \lambda_g(m+\nu) \exp\Big(\pm 2i\frac{t  \sqrt{|\nu|}}{\sqrt{m}}\Big) \Big|^2 \frac{dt}{2\pi} 
\end{split}
\end{equation}
where $u_j$ runs through 
Hecke Maa{\ss} cusp forms for ${\rm SL}_2(\Bbb{Z})$ with Hecke eigenvalues $\lambda_j(n)$ and spectral parameter $t_j \in  [2\pi T, 2\pi (T+H)]$. {The right hand side of \eqref{L1} is essentially a combination of \eqref{alternative} and \eqref{av}; this explains its relevance.}

Analogously we also consider the holomorphic analogue
\begin{multline}\label{L2}
\tilde{\mathcal{L}} :=  \frac{1}{TM^{1/2}}\sum_{\substack{4\pi T \leq k \leq 4\pi(T+H)\\ k  \text{ even}}} \sum_{u_j \in B_k} \frac{1}{L(\Ad^2 u_j, 1)}\\
\times\Big| \sum_{m } V\Big(\frac{m}{M}\Big) \lambda_j(m) \lambda_g(m+\nu) \exp\Big(\pm 2i\frac{k \sqrt{|\nu|}}{\sqrt{m}}\Big)\Big|^2 
\end{multline}
where $u_j$ runs over a Hecke eigenbasis $B_k$ of cusp forms  of weight $k$. For simplicity let us assume $\nu > 0$, the other case being essentially identical.  The aim of this section is to prove the following theorem. 
\begin{theorem}\label{prop3}
Let $T$ be a large parameter and let $\mathcal{L}, \tilde{\mathcal{L}}$ be defined as in \eqref{L1}, \eqref{L2} with $M, H$ as in \eqref{MH}.   Then 
$\mathcal{L}, \tilde{\mathcal{L}} \preccurlyeq TH.$
\end{theorem}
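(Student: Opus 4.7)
The plan is to follow the pipeline sketched in \S\ref{analytic}: open the absolute squares, spectrally decompose the resulting Hecke-eigenvalue correlation via the Kuznetsov trace formula (and Petersson for $\tilde{\mathcal{L}}$), asymptotically analyse the Bessel transform using \eqref{bessel1} (respectively \eqref{bessel3}), apply a $\mathrm{GL}(2)$-Voronoi summation to collapse the output into a mildly oscillating shifted convolution sum, handle the shift by Jutila's circle method, and finally bound the resulting quantity by a second round of Voronoi and Kuznetsov coupled with the spectral large sieve.

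Concretely, expanding the square in $\mathcal{L}$ gives a double sum over $m_1, m_2 \asymp M$ of $\lambda_g(m_1+\nu)\overline{\lambda_g(m_2+\nu)}$ multiplied by $\sum_{t_j}(\dotsb)\lambda_j(m_1)\lambda_j(m_2) e^{\pm 2it_j\sqrt{\nu}(m_1^{-1/2}-m_2^{-1/2})}$, together with its Eisenstein companion. The Kuznetsov formula returns the diagonal $m_1 = m_2$, which contributes $\preccurlyeq HM^{1/2}/T \preccurlyeq TH$ (using $M \preccurlyeq T^2$ and Rankin--Selberg bounds for $\sum|\lambda_g(m+\nu)|^2$), and an off-diagonal containing Kloosterman sums $S(m_1,m_2,c)$ weighted by $J_{2it_j}(4\pi\sqrt{m_1m_2}/c)/\cosh(\pi t_j)$. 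Substituting the uniform asymptotic \eqref{bessel1} and applying stationary phase via Lemma \ref{lem2} to the $t_j$-integral against the inserted phase localises $c$ to $c \asymp T/H$ and leaves an oscillatory kernel whose leading phase coincides with the phase of the $\mathrm{GL}(2)$-Voronoi kernel acting on $m_2$.

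Voronoi summation on $m_2$ then cancels the dominant oscillation: the dual variable concentrates near $m_1$ with shift $h$ of size $\asymp MH^2/T^2$, leaving a residual oscillation of the shape $e(\pm 2T\sqrt{\nu h/m})$ and producing a shifted convolution sum for $\lambda_g(m+\nu)\lambda_g(m+\nu+h)$ over $m \asymp M$. I would then apply Jutila's delta method (Lemma \ref{lem3}) to spectrally decompose the shift, apply $\mathrm{GL}(2)$-Voronoi once more to one of the $\lambda_g$-factors, and invoke Kuznetsov a second time; this converts the $m$-sum into a spectral average over cusp forms of spectral length $\preccurlyeq T/H$ containing $\asymp (T/H)^2$ terms, tested against the $h$-sum. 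Since $h$ also ranges over $\asymp (T/H)^2$ values, the spectral large sieve saturates in the $h$-variable and yields the sharp bound $\preccurlyeq TH$.

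The main obstacle is the careful matching of oscillations between the Kuznetsov Bessel kernel and the Voronoi kernel in the first reduction, together with the systematic tracking of flat weight functions through each transform. In particular, when $M$ is substantially smaller than $T^2$ the factor $\exp(\pm 2it_j\sqrt{\nu/m})$ produces subleading terms that do not literally match Voronoi and must be absorbed into flat weights, forcing the stationary-phase expansion to be carried to sufficient order. The choice $H = T^{1/3+\eps}$ is exactly the threshold at which the residual oscillation $e(\pm 2T\sqrt{\nu h/m})$ remains flat in $h$ and at which the $h$-length matches the spectral length, so all estimates must be uniform up to this boundary. The holomorphic case $\tilde{\mathcal{L}}$ is parallel, with \eqref{bessel3} replacing \eqref{bessel1} and stationary phase performed in $k$ in place of $t_j$.
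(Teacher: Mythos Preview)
Your plan is exactly the pipeline the paper executes in \S\ref{shifted}: Kuznetsov on the square, Bessel asymptotics \eqref{bessel1}/\eqref{bessel3} to localise $c$, Voronoi on $m_2$ to collapse to a shifted convolution, Jutila's circle method, a second Voronoi (on both factors), a second Kuznetsov, and then Cauchy--Schwarz plus the spectral large sieve. A couple of your numerical bookkeeping formulas are off---the diagonal is $\preccurlyeq H M^{1/2}$ rather than $H M^{1/2}/T$, and the shift after Voronoi has size $R = T^2 C^2/M$ (so $\asymp T^2/H^2 = (T/H)^2$ in the generic range, consistent with your later claim, not $M H^2/T^2$)---but these are slips, not gaps in the strategy.
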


{The proof of Theorem follows the steps outlined in \S \ref{analytic}. In particular, we will eventually transform the spectral sum  \eqref{L1} into a ``reciprocal'' spectral sum of length $T/H$ in \S \ref{sec58} to which we apply the large sieve.}

For $M \leq T^{2/3 + \eps}$ we can estimate trivially  (using a standard Rankin--Selberg bound)
$$\mathcal{L} \preccurlyeq \frac{1}{TM^{1/2}} THM^2 =HM^{3/2} \preccurlyeq TH$$
in agreement with Theorem \ref{prop3}. 
From now on we assume $M \geq T^{2/3+\eps}$. Then
$$\exp\left(\pm 2i\frac{(t-2 \pi T) \sqrt{\nu}}{\sqrt{m}}\right)$$
is flat for $t  \in [2\pi T, 2\pi( T+H)]$ by our choice of $H$ in \eqref{MH}, so we can replace $t_j$ and $t$ with $2 \pi T$ in the exponential (using the by now familiar device to separate variables in nice functions after having multiplied by a suitable function with compact support in $2\pi T+O(H)$). We restrict to the positive sign in the exponential, the negative sign being essentially identical. 

We can majorize the characteristic function on $[2\pi T, 2\pi(T+H)]$ by $h(t)=  \exp( - (\frac{t-2 \pi T}{H})^2)$ and then symmetrize  with respect to $t \mapsto -t$ in order to make the expression amenable for the Kuznetsov formula. 
The exact shape of the function plays no role. 

\subsection{Application of the spectral summation formula}
We open the square in \eqref{L1} and \eqref{L2} and apply the Kuznetsov formula \cite[Theorem 16.3, along with a conversion from Fourier coefficients to Hecke eigenvalues]{IK}. Since $\int h(t) t \, \text{tanh}(\pi t) \, dt \ll TH$ by our choice of $h$,  the diagonal term is bounded by 
$$\preccurlyeq \frac{1}{TM^{1/2}} TH \cdot M = M^{1/2} H \preccurlyeq TH$$
by \eqref{MH} in agreement with Theorem \ref{prop3}. 
For the off-diagonal term we must understand the Bessel transform
\begin{equation}\label{kuz}
\int_{t\in\Bbb{R}}  \exp\left( -\Big(\frac{t-2 \pi T}{H}\Big)^2\right)   \frac{J_{2it}(x)}{\cosh(\pi t)} t\, dt
\end{equation}
As  usual, we use holomorphicity to shift the contour a bit; in this way, we can truncate the $c$-sum in \eqref{off} by some large power of $T$ (cf.\ \cite[p.\ 75]{JM}). Having this done, we may smoothly truncate
the integral to the interval $[2\pi T - H T^\eps, 2\pi T + H T^\eps]$.
For $x \leq T^{1-\eps} H$, we apply Lemma \ref{lem1} and
the uniform asymptotic formula \eqref{bessel1} (along with \eqref{identities}) with
$$S = \min( 1, T/x), \quad Q = Y = T+ x, \quad U = H$$
to see that the integral is negligible. Having recorded the condition
\begin{equation}
\label{cond}
  x \geq T^{1-\eps} H,
\end{equation}  
   we don't exploit any further cancellation in the
   integral. Using \eqref{bessel1} along with a Taylor expansion
   we have, for $t \in [2 \pi T - H T^{\eps}, 2 \pi T + H
   T^{\eps}]$ and $x \geq T^{1-\eps} H$,
   the
   approximation
  \begin{equation}\label{tay}
\frac{J_{2it}(2x)}{\cosh(\pi t)} = 
x^{-1/2} \sum_{\pm}  e^{\pm 2 i  (-x + \frac{1}{2}(2 \pi T)^2/x)}  F^{\pm}(x, t) + O(|t|^{-N}) 
\end{equation}
for a flat function $F^{\pm}$. We substitute this into \eqref{kuz} and integrate trivially over $t$. Thus it suffices to estimate the following off-diagonal term
\begin{equation}\label{off}
\begin{split}
\frac{TH}{TM^{1/2}}   \sum_{ m_1, m_2  } &V\Big(\frac{m_1}{M}, \frac{m_2}{M}\Big)\sum_{c  }V\Big( \frac{c}{C}\Big) \frac{S(m_1, m_2, c)}{C}   \lambda_g(m_1+\nu)  \lambda_g(m_2+\nu)\\
&\times \frac{C^{1/2}}{M^{1/2}}
e\left(\pm\Big( \frac{2\sqrt{m_1m_2}}{c} -\frac{T^2c}{\sqrt{m_1m_2}}\Big)
\pm \frac{2 T\nu^{1/2}(\sqrt{m_2} - \sqrt{m_1})}{\sqrt{m_1m_2}}\right)
\end{split}
\end{equation}
by $\preccurlyeq TH$ (with all sign combinations), as required in Theorem \ref{prop3},
where $C$ runs through $\preccurlyeq 1$ numbers (e.g.\ powers of
2) satisfying 
\begin{equation}\label{C}
  1 \leq C \preccurlyeq \frac{M}{TH}
\end{equation}
and each weight function $V$ is nice.  

\medskip

We pause for a moment and consider the average
$\tilde{\mathcal{L}}$  over holomorphic forms, in which we
replace the Kuznetsov formula with the Petersson formula
\cite[Theorem 14.5]{IK}.  Here the analogue of \eqref{kuz} is
$$\sum_{k \in 2\Bbb{Z}} i^k V\left(\frac{k-4 \pi  T}{H}\right) k J_{k-1}(2 \pi x).$$
Using the Fourier representation \cite[8.411.1]{GR} of the Bessel function coupled with Poisson summation, this equals
\begin{displaymath}
\begin{split}
&\sum_{k \in 2\Bbb{Z}} i^k V\left(\frac{k-4 \pi T}{H}\right) k    \int_{-1/2}^{1/2} e\big((1-k)\theta + x \sin2\pi \theta \big) d\theta\\
= & \frac{1}{2}  \sum_{\substack{h \in \Bbb{Z}\\ h \text{ odd}}}   \int_{-1/2}^{1/2}  \int_{-\infty}^{\infty} V\left(\frac{y- 4 \pi T}{H}\right) y  e\Big( \frac{yh}{4} +  (1-y)\theta + x \sin 2\pi\theta\Big) dy\, d\theta. 
\end{split}
\end{displaymath}
The $y$-integral is negligible unless $h = \pm 1$ and $\theta =
\pm 1/4 + O(T^{\eps} H^{-1})$, but then
the
remaining
$\theta$-integral,
smoothly truncated to the latter range,
is negligible unless $x \geq T^{1-\eps} H$.
This last condition is the analogue of \eqref{cond}.
The analogue of \eqref{tay}, derived from \eqref{bessel3}, is
\begin{equation*}
i^k J_{2k}(x) = 
x^{-1/2} \sum_{\pm}  e^{\pm 2 i  (x + \frac{1}{2}(2 \pi T)^2/x)}  \tilde{F}^{\pm}(x, k) + O(k^{-N}) 
\end{equation*}
for the present range of variables which leads to the same expression as  \eqref{off} except for a sign in the exponential which won't play a role later. 

\medskip

We now continue with the analysis of \eqref{off}. In
preparation for an application of Voronoi summation, we shift
the 
variables $ m_1, m_2$ by $\nu$. By a Taylor expansion, this makes no difference in the exponential, the resulting correction term being flat. It therefore suffices to bound
\begin{equation}\label{off1}
\begin{split}
\frac{H}{M }   \sum_{ m_1, m_2  } &V\Big(\frac{m_1}{M}, \frac{m_2}{M}\Big)\sum_{c  }V\Big( \frac{c}{C}\Big) \frac{S(m_1-\nu, m_2-\nu, c)}{\sqrt{C}}  \lambda_g(m_1)  \lambda_g(m_2 )\\
&\times e\left(\pm\Big( \frac{2\sqrt{m_1m_2}}{c} - \frac{T^2c}{\sqrt{m_1m_2}}\Big) \pm   \frac{2 T\nu^{1/2}(\sqrt{m_2} - \sqrt{m_1})}{\sqrt{m_1m_2}}\right)   .
\end{split}
\end{equation}

\subsection{Preparatory interlude}\label{interlude}
 We pause to recall the
Voronoi formula (see e.g. \cite[Lemma 25 \& Lemma 6]{BM}):
for $c \in \mathbb{N}$ and $(b,c) = 1$, we have 
\[
  \sum _{n}
  V \left( n \right)
  \lambda_g(n)
  e \left( \frac{b n }{c} \right)
  =
  \frac{1}{c}
  \sum_{\pm}
  \sum_n
  V^{\wedge}_{\pm} \left( \frac{n}{c^2} \right)
  \lambda_g(n)
  e \left( \mp \frac{\bar{b} n }{c} \right),
\]
\[
    V^{\wedge }_{\pm}(y)
  = \int_0^\infty V(x) \mathcal{J}^{\pm}(4 \pi \sqrt{x
    y}) \, d x,
\]
where
$\mathcal{J}^{\pm} = \mathcal{J}^{\pm}_g$ is given
for $g$ a Maa{\ss} form of eigenvalue $1/4+t_g^2$
by
\[
  \mathcal{J}^{+}(x)
  =
  \pi i \frac{J_{2 i t_g}(x) - J_{- 2 i t_g}(x)}{\sinh(\pi
    t_g)},
  \quad 
  \mathcal{J}^{-}(x)
  = 4 \cosh(\pi t_g) K_{2 i t_g}(x)
\]
and
for $g$ a holomorphic form of weight $k_g$
by
  \[
    \mathcal{J}^+(x)
  =
  2 \pi i^{k_g} J_{k_g-1}(x),
  \quad
  \mathcal{J}^-(x) = 0.
\]

In the following sections we will twice have to compute integrals of the form
\begin{equation}\label{osc-int}
\int_{x\in \Bbb{R}} V\Big(\frac{x}{M}\Big) e(\alpha x^{1/2} + \beta x^{-1/2}) dx
\end{equation}
for  certain $\alpha, \beta \in \Bbb{R}$ satisfying
\begin{equation}\label{big}
  |\alpha| M^{1/2} + |\beta|M^{-1/2} \gg M^{\eps}.
\end{equation}
In this case it follows from Lemma \ref{lem1} with $$U = M^{1-\eps/2} , \quad P = M, \quad S = |\alpha| M^{-1/2} + |\beta| M^{-3/2}, \quad Y = MS$$
that \eqref{osc-int} is negligible unless $\beta/\alpha \asymp
M$ (which,
by the conventions of \S\ref{weight},
implies in particular that
$\sgn(\alpha) = \sgn(\beta)$)
in which case by Lemma \ref{lem2} 
(after restricting to dyadic ranges
$\alpha \asymp A$ and $\beta \asymp B$ 
 and also possibly
restricting the support of
$V$
to a neighbourhood of $t^{\ast}$) with
$$\phi(t)  = \phi(t; \alpha, \beta) = \alpha t^{1/2} + \beta t^{-1/2}, \quad t^{\ast} = \frac{\beta}{\alpha},  \quad \frac{\partial }{\partial t} \phi(t^{\ast}; \alpha, \beta) = \frac{\beta}{2t_0^{5/2}} \asymp \frac{\beta}{M^{5/2}}, $$
 it equals
\begin{equation}\label{alphabeta}
  \frac{M^{5/4}}{|\beta|^{1/2}}
  V\Big(\frac{\alpha }{A} \Big)
  V\Big(\frac{\beta }{B} \Big)
  V\Big(\frac{\beta/\alpha}{M}\Big)e\big(2 \sgn(\alpha)\sqrt{\alpha \beta}\big),
\end{equation}
as usual with different functions $V$, and up to a negligible
error.

\subsection{Application of Voronoi summation}\label{sec:appl-voron-summ}

We open the Kloosterman sum in \eqref{off1} and apply the
Voronoi formula
to
the following
$m_2$-sum:
$$\sum_{m_2} V\Big(\frac{m_2}{M}\Big)\lambda_g(m_2) e\Big(
\frac{\bar{d}m_2}{c}\Big)e\left( \pm\Big(
\frac{2\sqrt{m_1m_2}}{c} - \frac{T^2c}{\sqrt{m_1m_2}}\Big)
\mp \frac{2 T\nu^{1/2}}{\sqrt{ m_2}}\right) $$
where $(d, c) = 1$.
We analyze the integral transforms using
\eqref{bessel3} and \eqref{fixed}.
In the Maa{\ss} case,
we see that
the $\mathcal{J}^-$-term is negligible
thanks to
the rapid decay of the Bessel
$K$-function
and the consequence $M/C^2 \succcurlyeq H^2 \geq T^{2/3+\eps}$
of our hypotheses  \eqref{MH} and \eqref{C}.
In either case, the $\mathcal{J}^+$ term contributes
\begin{equation}\label{vor}
\begin{split}
\sum_{m_2}\lambda_g(m_2) e\Big( &\frac{-d m_2}{c}\Big) \frac{1}{c} \int_{x\in\Bbb{R}}  V\Big( \frac{x}{M}\Big) \left(\Big(\frac{\sqrt{ m_2x}}{c}\Big)^{1/2} + \Big(\frac{\sqrt{ m_2x}}{c}\Big)^{2|\Im t_g|}\right)^{-1} \\
&\times e\left( \sigma_1\Big(\frac{2\sqrt{m_1x}}{c} - \frac{T^2c}{\sqrt{m_1x}}\Big)+\sigma_2 \frac{2 T\nu^{1/2}}{\sqrt{ x}}\right)e\left( \sigma_3\frac{2\sqrt{xm_2}}{c}\right)  dx
\end{split}
\end{equation}
with $\sigma_1, \sigma_2, \sigma_3 \in \{\pm1 \}$, and where as
usual the meaning of $V$ may have changed. The $x$-integral is
of the shape \eqref{osc-int} with
$$\alpha = \frac{2 ( \sigma_1 \sqrt{m_1} + \sigma_3 \sqrt{m_2})}{c},
  \quad \beta =  -\sigma_1 \frac{T^2c}{\sqrt{m_1}} + \sigma_2  2T\nu^{1/2}. $$
For the analysis of this integral it is important to note that 
$$\frac{\sqrt{m_1x}}{c} \asymp \frac{M}{C}$$
is by at least a factor $H^2 T^{-\eps}$ larger than
$$\frac{T^2c}{\sqrt{m_1x}} \asymp \frac{T^2C}{M}$$
by \eqref{MH} and \eqref{C}, and the latter is larger than
$T\nu^{1/2}x^{-1/2} \preccurlyeq TM^{-1/2}$ unless $C \preccurlyeq 1$
and
$T^2 \preccurlyeq M$.  
Let us define 
\begin{equation}\label{R}
  R := T^2C^2/M.
\end{equation}
If $R \geq T^{\eps}$, then \eqref{big} is satisfied, and we conclude from the discussion in the previous subsection that the $x$-integral is negligible unless   $\sigma_1 = -\sigma_3$  and\footnote{This implies in particular $m_2 - m_1 > 0$. In the holomorphic average $\tilde{\mathcal{L}}$, the term $-T^2c/\sqrt{m_1x}$ would not have a minus sign, so that here $m_2 - m_1 < 0$. This sign is responsible for the choice of the integral transform in the final application of the Kuznetsov formula in \S\ref{sec58}. We mention this only for the sake of clarity. In the following all sign combinations are treated uniformly.}
$$m_2 - m_1 \asymp R.$$
If $R \preccurlyeq 1$, the same argument still shows that 
 $m_2 - m_1 \preccurlyeq R \preccurlyeq 1$ (which allows in
particular $m_1 = m_2$).
In particular,
$\sqrt{m_2x}/c \asymp M/C \gg T^{1-\eps}H\gg 1$ is large and therefore we can drop the term containing $|\Im t_g|$  in \eqref{vor}.

Before we proceed, we estimate the total contribution of
$R \preccurlyeq 1$ in \eqref{vor} when substituted into
\eqref{off1} trivially by
$$\preccurlyeq \frac{H}{M} \cdot M
\cdot M^{1/2}= HM^{1/2} \preccurlyeq HT,$$
which is acceptable for Theorem \ref{prop3}.
So from now on we assume that 
\begin{equation}\label{large}
   R \geq T^{\eps}.
\end{equation}
In view of \eqref{MH} we can then also assume
\begin{equation}\label{large1}
  T^2C/M \geq T^{\eps}.
\end{equation}
 For such $R$ and $m_2 - m_1  \asymp R$   and $\sigma_1 = -\sigma_3$ we see from \eqref{alphabeta}  
 that    \eqref{vor} can be recast as 
 \begin{multline*}
\frac{M/T}{c} \sum_{ m_2} V\Big( \frac{m_2 - m_1}{R}\Big)\lambda_g(m_2) e\Big( -\frac{d m_2}{c}\Big)\\ \times e\left( \pm   \sqrt{8(\sqrt{m_2} - \sqrt{m_1}) \Big|\frac{T^2}{\sqrt{m_1}} \pm \frac{2 T\nu^{1/2}}{  c}\Big|} \right),
\end{multline*}
up to a negligible error and for suitable sign combinations. 
Plugging back into \eqref{off1} and calling $r = m_2 - m_1$, $m
= m_1$, we obtain a total contribution
\begin{displaymath}
\begin{split}
\frac{H}{TC^{1/2} }  \sum_{ r} & \sum_{ m} \sum_{c  }V\Big( \frac{r}{R}, \frac{m}{M}, \frac{c}{C}\Big)  \frac{ S(-r-\nu, -\nu, c)}{c} \lambda_g(m)  \lambda_g(m+r)\\
&\times e\left( \pm   \sqrt{8(\sqrt{m+r} - \sqrt{m}) \Big| \frac{T^2}{\sqrt{m}} \pm \frac{2 T\nu^{1/2}}{ c}\Big|}\pm \frac{2 T\nu^{1/2}}{ \sqrt{m}}\right).
\end{split}
\end{displaymath}
 For notational simplicity the previous display deals only with the case $r > 0$. The case $r < 0$, coming from the holomorphic average $\tilde{\mathcal{L}}$ can be treated in the same way. We simplify the exponential a bit using suitable Taylor
expansions.
First,
using the expansion
$\sqrt{ \sqrt{1 + 2 x} - 1}
= x^{1/2} - x^{3/2}/4 + \dotsb$,
we replace
$\sqrt{|\sqrt{m+r} - \sqrt{m}|}$ with $(r/2)^{1/2} m^{-1/4}$,
the error being flat since,
by \eqref{MH} and \eqref{C},
 $$
\frac{1}{M^{1/4}}
\cdot 
\frac{R^{3/2}}{M^{3/2}} \cdot \frac{T}{M^{1/4}} = \frac{T^4 C^3}{M^3} \preccurlyeq \frac{T}{H^3} \preccurlyeq 1.$$
Similarly, we can replace
$$\Big| \frac{T^2}{\sqrt{m}} \pm \frac{2 T\nu^{1/2}}{  c}\Big|^{1/2} \quad 
\text{with} \quad \Big|\frac{T}{m^{1/4}} \pm \frac{   m^{1/4}  \nu^{1/2}}{c}\Big|$$
up to a flat function.
Thus it suffices to bound (with various sign combinations)\begin{displaymath}
\begin{split}
\frac{H}{TC^{1/2} }  \sum_{ r} & \sum_{ m} \sum_{c  }V\Big( \frac{r}{R}, \frac{m}{M}, \frac{c}{C}\Big)  \frac{ S(-r-\nu, -\nu, c)}{c} \\
&\times \lambda_g(m)  \lambda_g(m+r)e\left(\pm \frac{2 T r^{1/2}}{ m^{1/2}}  \pm  \frac{2 (r\nu)^{1/2}}{c} \pm  \frac{2 T\nu^{1/2}}{ m^{1/2}}\right)
\end{split}
\end{displaymath}
by $\preccurlyeq TH$. 
 We write the previous display as
\begin{equation}\label{newshift}
\begin{split}
\frac{H}{TC^{1/2} }  \sum_{ r} F(r)& \sum_{ m}  V\Big( \frac{ r}{R}, \frac{m}{M}\Big)    \lambda_g(m)  \lambda_g(m+r) e\left(\pm \frac{2 T (r^{1/2} \pm   \nu^{1/2})}{m^{1/2}} \right)
\end{split}
\end{equation}
where
$$F(r)=  \sum_{c  }V\Big(\frac{c}{C}\Big)\frac{ S(-r-\nu, -\nu, c)}{c}  e\Big(\pm  \frac{2 (r\nu)^{1/2}}{c}\Big).$$

\subsection{An average of Kloosterman sums}

 We pause for a moment and prove that
\begin{equation}\label{2norm}
\sum_{  r \asymp R} |F(r)|^2 \preccurlyeq R.
\end{equation}
It is tempting to use the Kuznetsov formula, but we can argue in an elementary way.  
 We insert a smooth weight and open the square getting 
\begin{multline*}
\sum_{r}V\Big(\frac{  r}{R}\Big) |F(r)|^2 =
\sum_{c_1, c_2 \asymp C}\frac{1}{c_1c_2} V\Big( \frac{c_1}{C}, \frac{c_2}{C}\Big)\\
\times\underset{d_1 \, (\text{mod } c_1)}{\left.\sum
\right.^{\ast} } \underset{d_2 \, (\text{mod } c_2)}{\left.\sum
\right.^{\ast} }   e\left(-\nu\Big( \frac{ d_1 +  \bar{d}_1}{c_1} +  \frac{ d_2 +  \bar{d}_2}{c_2}\Big) \right)\\
\times\sum_{r}V\Big(\frac{ r}{R}\Big)
e\left( -\frac{(d_1c_2 + d_2c_1)r}{c_1c_2}\right)
e\left(\pm 2 (r\nu)^{1/2} \frac{c_2 - c_1}{c_1c_2}\right).
\end{multline*}
We split the $r$-sum into residue classes modulo $c_1c_2$ and
apply Poisson summation. The combined conductor of the exponentials is $\preccurlyeq  C^2 R^{1/2}/C$, and since 
$R\gg T^{-\eps} C^2 R^{1/2}/C$, it is easy to see that the dual sum picks
up at most $\preccurlyeq 1$ terms. 
 Hence we obtain the upper bound
$$\sum_{  r \asymp R} |F(r)|^2  \ll R \sum_{h \preccurlyeq 1} \sum_{c_1, c_2 \asymp C}\frac{1}{c_1c_2} \underset{d_1c_2 + d_2c_1 \equiv h \, (\text{mod }c_1c_2)} {\underset{d_1 \, (\text{mod } c_1)}{\left.\sum
\right.^{\ast} } \underset{d_2 \, (\text{mod } c_2)}{\left.\sum
\right.^{\ast} } } 1.$$
If $h = 0$, then
the inner double sum vanishes unless $c_1 = c_2$. If $h \not= 0$, then the congruence fixes $d_j$ modulo $c_j/(c_j, h)$. In either case we confirm \eqref{2norm}.\\

\emph{Remark:} It is clear from the proof that the smooth weight function $V(c/C)$ in the definition of $F(r)$ plays no role here and could be replaced with arbitrary bounded weights $\alpha_c \ll 1$ for  $c \asymp C$. The only assumption on $R$ and $C$ used in the proof is $C \preccurlyeq R^{1/2}$.

\subsection{Application of the circle method}\label{sec56}

We now return to \eqref{newshift} and  treat the $m$-sum as a shifted convolution problem:
$$\sum_{n, m} V\Big( \frac{n}{M}, \frac{m}{M}\Big) \lambda_g(n) \lambda_g(m)  e\left(\pm \frac{2 T (r^{1/2} \pm   \nu^{1/2})}{ n^{1/2}} \right) \int_0^1 e\big((m-n - r )\alpha\big) d\alpha.$$
We choose a gigantic parameter $Q = T^{1000}$ and replace the characteristic function on $[0, 1]$  with $I(\alpha)$, defined in Lemma \ref{lem3}. By Cauchy-Schwarz and trivial bounds, this introduces an error at most $M^2 Q^{\eps-1/2}$ which can be neglected. In this way, the $\alpha$-integral becomes
$$ \frac{Q}{2\Lambda} \sum_q V\Big(\frac{q}{Q}\Big)  \sum_{\substack{d\, (\text{{\rm mod }} q)\\ (d, q) = 1}} \int_{-1/Q}^{1/Q} e\left( \Big( \frac{d}{q} + \alpha\Big) (m-n - r ) \right) d\alpha.$$
The portion
$e(\alpha (m-n - r))$
 is obviously flat, so we end up with bounding
\begin{multline}\label{beforeVor}
\frac{1}{\Lambda}  \sum_q V\Big(\frac{q}{Q}\Big)\sum_{\substack{d\, (\text{{\rm mod }} q)\\ (d, q) = 1}} \sum_{n, m} V\Big( \frac{n}{M}, \frac{m}{M}\Big)\\ \times\lambda_g(n) \lambda_g(m)  e\left(\pm \frac{2 T (r^{1/2} \pm  \nu^{1/2})}{ n^{1/2}} \right)  e\Big(   \frac{d}{q}  (m-n - r ) \Big)
\end{multline}
where $\Lambda = \sum_q V(q/Q) \phi(q) = Q^{2 + o(1)}$. Recall that this represents the $m$-sum in \eqref{newshift}. 

\subsection{Voronoi again}\label{sec57}
Having separated the variables $m, n$ by the circle method, we
apply the Voronoi formula (\S\ref{sec:appl-voron-summ})
to both sums in \eqref{beforeVor}. This is simple for the $m$-sum 
$$\sum_m V\Big(\frac{m}{M}\Big) \lambda_g(m) e\Big( \frac{dm}{q}\Big)$$
because it contains no archimedean oscillation. If $g$ is a Maa{\ss} form, we get two terms, one with a Bessel $J$-function and one with a Bessel $K$-function, and as before we use \eqref{fixed} for the analysis. The dual variable  can be truncated at $\preccurlyeq Q^2/M$ at the cost of a negligible error, by the oscillatory behaviour of the Bessel $J$-function \eqref{bessel1} with $t \ll 1$ and the rapid decay of the Bessel $K$-function.  
Using a smooth partition of unity, we obtain $\preccurlyeq 1$ partial sums of the shape
\begin{equation}\label{m-sum}
\frac{M}{Q} \sum_m V\Big(\frac{m}{M'}\Big)  \left(\frac{M'M}{ Q^2}\right)^{-|\Im t_g|} \lambda_g(m) e\Big( \pm \frac{\bar{d}m}{q}\Big), \quad M' \preccurlyeq Q^2/M.
\end{equation}
The same analysis (slightly simpler) applies if $g$ is holomorphic. 

For the $n$-sum
$$\sum_n V\Big(\frac{n}{M}\Big) \lambda_g(n) e\Big(\pm \frac{2 T (r^{1/2} \pm \nu^{1/2})}{ n^{1/2}} \Big) e\Big( -\frac{dn}{q}\Big)$$
we argue as follows.
We note that $Tr^{1/2}x^{-1/2} \asymp TR^{1/2}M^{-1/2} \geq T^{\eps}$ by \eqref{MH} and \eqref{large}, so there is sizeable oscillation. In particular, we see from the last formula in \eqref{fixed} and Lemma \ref{lem1} with $U= P  = M$, $Y = TR^{1/2}M^{-1/2}$, $S = Y/M$ (so that $PS/\sqrt{Y}$ and $SU$ are both $\gg T^{\eps}$) that  the Bessel $K$-term is negligible. %
 For the Bessel $J$-term 
again by \eqref{fixed} we need to understand the transform
\begin{multline*}
\int_{x\in\R} V\Big(\frac{x}{M}\Big) e\left(\pm \frac{2 T (r^{1/2} \pm \nu^{1/2})}{ x^{1/2}} \right)e\left( \frac{\pm2 \sqrt{nx}}{q}\right) \\ \times \left(\Big(\frac{ \sqrt{nx}}{q}\Big)^{1/2} + \Big(\frac{ \sqrt{nx}}{q}\Big)^{2|\Im t_g|} \right)^{-1} dx,
\end{multline*}
which is of the shape \eqref{osc-int} with
$$\alpha = \pm\frac{2\sqrt{n}}{q}, \quad \beta = \pm 2T(r^{1/2}
\pm \nu^{1/2}).$$
The condition \eqref{big}
is satisfied in view of \eqref{MH} and \eqref{large}.
From the discussion in \S\ref{interlude} we conclude that the $x$-integral   is negligible unless $$n \asymp \frac{Q^2 T^2 R}{M^2}, $$
 in which case it equals, up to a negligible error,
\begin{displaymath}
\begin{split}
V\left( \frac{n}{Q^2T^2R/M^2}\right) &e\left( \pm \frac{4 T^{1/2} (r^{1/2} \pm\nu^{1/2})^{1/2} n^{1/4}}{q^{1/2}}\right)\\
& \times \left( \Big(\frac{T R^{1/2}}{M^{1/2}}\Big)^{1/2}+ \Big(\frac{T R^{1/2}}{M^{1/2}}\Big)^{2|\Im t_g|}\right)^{-1} \left(\frac{TR^{1/2}}{M^{5/2}}\right)^{-1/2}.
\end{split}
\end{displaymath}
Here we can afford to drop the term involving $|\Im t_g|$ and see that the
 $n$-sum is of the form
\begin{multline*}
\frac{1}{q} \Big(\frac{TR^{1/2}}{M^{3/2}}\Big)^{-1} \sum_{n } V\left( \frac{n}{Q^2T^2R/M^2}\right) \lambda_g(n)e\Big( \frac{\bar{d}n}{q}\Big)\\ \times  e\Big( \pm \frac{4 T^{1/2} (r^{1/2} \pm \nu^{1/2})^{1/2} n^{1/4}}{q^{1/2}}\Big).
 \end{multline*}
Substituting this and \eqref{m-sum} back into \eqref{beforeVor}, we can replace  \eqref{beforeVor} with terms of the form 
\begin{equation}\label{recast}
\begin{split}
\frac{M}{Q} &\left(\frac{M'M}{ Q^2}\right)^{-|\Im t_g|}\frac{M^{3/2}}{TR^{1/2}} \frac{1}{Q^2} \sum_{m} \sum_{n}V\Big(\frac{m}{M'}\Big)V\left( \frac{n}{Q^2T^2R/M^2}\right)  \lambda_g(n) \lambda_g(m) \\
&\times\sum_{q  } V\Big(\frac{q}{Q}\Big) \frac{S(\pm m + n, -r, q)}{q}
e\Big( \pm
\frac{ 4 T^{1/2} (r^{1/2} \pm \nu)^{1/2} n^{1/4}}{q^{1/2}}\Big).
\end{split}
\end{equation}
 We note that $n$ is always substantially bigger than $m$, since $R \geq T^{\eps}$, so the arguments of the Kloosterman sum never vanish.

\subsection{Kuznetsov again}\label{sec58} Eventually \eqref{recast} has   to be inserted into \eqref{newshift}, but before we do this we focus on the $q$-sum 
which  calls for an application of the Kuznetsov formula \cite[Theorem 16.5, 16.6]{IK}. Before we carry this out, we simplify the exponential a bit. By a Taylor expansion we can replace $n^{1/4}$ with $(n \pm m)^{1/4}$, the error being flat since
$$\frac{T^{1/2} R^{1/2} m}{n^{3/4}Q^{1/2}} \preccurlyeq \frac{T^{1/2} R^{1/2} Q^2 M^{3/2}}{M Q^{3/2} T^{3/2} R^{3/4} Q^{1/2}} = \frac{M^{1/2}}{TR^{1/4}} \preccurlyeq \frac{1}{R^{1/4}} \leq 1.$$
We can also replace  $(r^{1/2}\pm  \nu^{1/2})^{1/2}$ with $ r^{1/4} \pm \frac{1}{2} \nu^{1/2} r^{-1/4} $, the total error being flat since
$$\frac{T ^{1/2} n^{1/4} }{Q^{1/2} R^{3/4}} \preccurlyeq \frac{T^{1/2} Q^{1/2} T^{1/2} R^{1/4}}{M^{1/2} Q^{1/2} R^{3/4}} =  \frac{1}{C} \leq 1.$$
Therefore the $q$-sum in \eqref{recast} becomes
\begin{multline*}
\sum_{q  } V\Big(\frac{q}{Q}\Big) \frac{S(\pm m + n, -r, q)}{q}  e\Big( \pm \frac{4 T^{1/2} r ^{1/4} (n\pm m)^{1/4}}{q^{1/2}}\Big) e\Big(\pm  \frac{2   ( \nu T)^{1/2} (n\pm m)^{1/4}}{r^{1/4}  q^{1/2}} \Big).
\end{multline*}
The first exponential fits very well into the shape of the Kuznetsov formula, the second does not. Unfortunately it is not (always) flat, but we can afford to open it by Mellin inversion. We first add a redundant weight function $
V(r/R)$ and then write
\begin{displaymath}
\begin{split}
&V\Big(\frac{  r}{R}\Big) e\left(\pm  \frac{2( \nu T)^{1/2} (n\pm m)^{1/4}}{r^{1/4} q^{1/2}} \right) \\
&= \int _{\Re(s)=0} \left(\int_{x\in\Bbb{R}_{>0}} V\Big(\frac{  x}{R}\Big) e\left(\pm  \frac{2 ( \nu T)^{1/2} r^{1/4}(n\pm m)^{1/4}}{x^{1/2} q^{1/2}} \right) x^{s} \frac{dx}{x}\right) r^{-s} \frac{ds}{2\pi i}.
\end{split}
\end{displaymath}
The outer $s$-integral can be truncated at
$$\Im s \preccurlyeq \frac{T^{1/2} (Q^2T^2R/M^2)^{1/4}}{R^{1/4}Q^{1/2}} = \frac{T}{M^{1/2}}.$$
We sacrifice all cancellation in the $x, s$-integrals and pull
them outside of all sums, including the $r, m, n$-sums in
\eqref{recast} and \eqref{newshift} which are currently not
displayed.
(We remark that in the ``generic'' range $M \approx T^2$,
we sacrifice nothing here.)
The remaining $q$-sum is of the form
\begin{equation}\label{qsum}
   \frac{T}{M^{1/2}} \sum_q  \frac{S(\pm m + n, -r, q)}{q}  \Phi_x\left(\frac{\sqrt{(n \pm m)r}}{q}\right)
     \end{equation}
where $ x \asymp R$ and 
$$\Phi_x(z) = V\Big(\frac{z}{Z}\Big) e\left( \pm  4 (T z)^{1/2}\Big(1 \pm \frac{ \nu^{1/2}}{2x^{1/2}}\Big) \right)$$
with
\begin{equation}\label{z}
Z =  \frac{R^{1/2}(Q^2T^2R/M^2)^{1/2}}{Q} = \frac{RT}{M} = \frac{T^3C^2}{M^2}
\end{equation}
by \eqref{R}. 
It is important that $\Phi_x$ does not depend on any of the variables $n, m, r, q$. 
We can now apply the Kuznetsov formula to the $q$-sum.
For a quantitative analysis, we need to understand the three integral transforms
\begin{displaymath}
\begin{split}
\check{\Phi}_+(t) = &\int_0^{\infty} \Phi_x(z) \frac{J_{2it}(z)- J_{-2it}(z)}{\sinh(\pi t)} \frac{dz}{z},\\
\check{\Phi}_-(t) =& \int_0^{\infty} \Phi_x(z) K_{2it}(z) \cosh(\pi t) \frac{dz}{z},\\
\check{\Phi}_{\text{hol}}(k) =& \int_0^{\infty} \Phi_x(z) J_{k-1}(z) \frac{dz}{z}. 
\end{split}
\end{displaymath}
To this end, it is important to note that $\Phi_x$ has sizeable
oscillation since
\begin{equation*}
T^{1/2}z^{1/2} \asymp T^{1/2} Z^{1/2} = T^2C/M \gg T^{\varepsilon}
\end{equation*}
by   \eqref{large1}. 
We note also, by \eqref{C} and 
\eqref{z}, that 
\begin{equation}\label{sizeable1}
Z/T = (T C/M)^2 \preccurlyeq 1/H^2,
\end{equation}
so that $Z$ is much smaller than $T$.

We start with an analysis of $\check{\Phi}_+(t)$, recalling the formula \eqref{bessel1}.  First we observe that it is negligible unless $t \asymp T^{1/2} Z^{1/2}$, otherwise we may apply Lemma \ref{lem1} with
 $$U = Z T^{-\eps}, \quad P  = Z, \quad  Y = \max(|t|, T^{1/2} Z^{1/2}), \quad S = Y/Z. $$
If $t \asymp T^{1/2} Z^{1/2}$, we can apply Lemma \ref{lem2} (or
in fact the one-dimensional version \cite[Proposition
8.2]{BKY}). We will not compute the stationary point $x_0$ and
the shape of the resulting phase (although it can be done
algebraically and leads to a quadratic equation with a unique
solution if potentially the support of
$V$ is slightly
restricted), but only bound the size of the integral to be
 $$\ll \Big( \frac{T^{1/2}}{Z^{3/2}}\Big)^{-1/2} \frac{1}{(TZ)^{1/4}} \frac{1}{Z} = \frac{1}{(TZ)^{1/2}}.$$ 
Here the first factor comes from the stationary phase
analysis, the second factor from \eqref{bessel1}
(noting that $Z^{1/2} + |t|^{1/2} \asymp |t|^{1/2} \asymp (T Z)^{1/4}$)
and the last factor from the measure $dz/z$.
  
For $\check{\Phi}_-(t)$ we can argue in the same way, using
\eqref{bessel2}, if $|t| \gg Z$ with a sufficiently large
implied constant.
For $|t| \ll Z$ with a sufficiently
small implied
constant,
the Bessel $K$-function is negligible (cf.\ e.g.\ \cite[(A.3)]{BLM}), and for $|t| \asymp Z$ we simply regard the Bessel kernel as part of the weight function and  use Lemma \ref{lem1} with 
$$U = \min(Z T^{-\eps}, 1),  \quad P  = Z, \quad  Y =   T^{1/2}
Z^{1/2}, \quad S = Y/Z$$ (cf.\ e.g.\ \cite[(A.1) \& (A.3)]{BLM}
for the relevant bounds for Bessel functions)
to show that this contribution is
negligible, too.

Similarly we see that $\check{\Phi}_{\text{hol}}(k) $ is
negligible in all cases.

As an aside, we note that this analysis
is
independent of potential exceptional eigenvalues,
whose contribution would always be negligible (because of \eqref{sizeable1}). 

Summarizing the previous discussion, we can re-write \eqref{qsum},   up to a negligible error, as
\begin{equation}\label{dualspectral}
    \frac{T}{M^{1/2}}  \frac{1}{TZ} \sum_{t_j \asymp (TZ)^{1/2}}  \frac{\rho_j(n \pm m) \rho_j( -r)}{\cosh(\pi t_j)}\Psi(t_j) + \text{continuous spectrum}
    \end{equation}
where $\Psi$ is some function of which we only need to know $\Psi\preccurlyeq 1$, and the sum runs over the Fourier coefficients (in usual normalization) of Hecke Maa{\ss} cusp forms for ${\rm SL}_2(\Bbb{Z})$ with spectral parameter $t_j$.  We do not need to be more precise since the spectral sum (including the continuous contribution) will disappear in a moment when we apply the Cauchy--Schwarz inequality and the large sieve.

\subsection{Cauchy--Schwarz and the large sieve}

 We recall that the previous display represents the $q$-sum in
\eqref{recast} which itself is the $m$-sum in \eqref{newshift}.
Applying the Cauchy--Schwarz inequality, we deduce that the total
contribution to $\mathcal{L}$ and $\tilde{\mathcal{L}}$
is
$\preccurlyeq \Delta
\sqrt{\Sigma_1 \Sigma_2}$,
where
\begin{align*}
  \Delta &:=
  \frac{H}{TC^{1/2} }  \frac{M}{Q}
  \left(\frac{M'M}{ Q^2}\right)^{-|\Im t_g|}
  \frac{M^{3/2}}{TR^{1/2}}  \frac{1}{Q^2}\frac{T}{M^{1/2}}
           \frac{1}{TZ},
           \\
  \Sigma_1 &:=
  \sum_{t_j \asymp T^2C/M}  \frac{1}{\cosh(\pi t_j)}
  \Big|\sum_{r  } V\Big( \frac{r}{R}\Big)F(r)   \rho_j(r)
             \Big|^2 + (\dotsb),
             \\
  \Sigma_2 &:=
  \sum_{t_j \asymp T^2C/M}  \frac{1}{\cosh(\pi t_j)}
  \Big|\sum_{s  } \rho_j(s)G(s) \Big|^2 
  + (\dotsb),
\end{align*}
where $(\dotsb)$ denotes the continuous spectrum
contribution
and
$$G(s) :=\sum_{n \pm m = s}V\Big(\frac{m}{M'}\Big)V\left( \frac{M^2n}{Q^2T^2R}\right)  \lambda_g(n) \lambda_g(m). $$
We prepare for the large sieve by estimating the 2-norm
\begin{multline*}
    \sum_{s} |G(s)|^2 = \sum_{n_1 \pm m_1 = n_2 \pm m_2} V\Big(\frac{ m_1}{M'}, \frac{m_2}{M'} \Big)\\ \times V\left( \frac{M^2n_1}{Q^2T^2R}, \frac{M^2n_2}{Q^2T^2R}\right)    \lambda_g(n_1) \lambda_g(m_1)\lambda_g(n_2) \lambda_g(m_2).
\end{multline*}
We detect the condition $n_1 \pm m_1 = n_2 \pm m_2$ by a Fourier integral $\int_0^1 e((n_1 \pm m_1 -( n_2 \pm m_2))\alpha) d\alpha$ and use Wilton's bound to conclude that
$$\sum_{s} |G(s)|^2  \preccurlyeq M'\frac{Q^2T^2R}{M^2}.$$
Now the scene has been prepared for the endgame with the
spectral large sieve of Deshouillers--Iwaniec \cite{DI}.
Using this and recalling that $Q$ is very large,
we deduce that $\Delta \sqrt{\Sigma_1 \Sigma_2}$ is
\begin{multline*}
    \preccurlyeq 
    \frac{H}{TC^{1/2} }  \frac{M}{Q} \left(\frac{M'M}{
      Q^2}\right)^{-|\Im t_g|} \frac{M^{3/2}}{TR^{1/2}}
    \frac{1}{Q^2} \frac{T}{M^{1/2}} \frac{1}{TZ}\\
    \times\left( \Big( \frac{T^4 C^2}{M^2} + R\Big)R\right)^{1/2} \left( \frac{Q^2T^2R}{M^2} \cdot  \frac{Q^2T^2R}{M^2} \cdot M' \right)^{1/2}. 
\end{multline*}
Since $|\Im t_g| < 1/2$,
this expression is increasing in $M'$, so we can replace $M'$
with its largest value $Q^2/M$ (up to $T^{\eps}$), cf.\
\eqref{m-sum}, so that we can drop the term $ (M'M/Q^2)^{-|\Im
  t_g|}$.  Simplifying and using
\eqref{z},
\eqref{R}, \eqref{C}
and
\eqref{MH},
we obtain the final upper bound 
\begin{displaymath}
  \begin{split}
    & \preccurlyeq  \frac{HM^{3}}{Q^3 R^{1/2}C^{3/2}T^3} \cdot \frac{T^2C}{M} R^{1/2} \cdot \frac{Q^3 T^2 R}{M^{5/2}} \\&= \frac{HT R}{(MC)^{1/2}} = \frac{H T^3 C^{3/2}}{M^{3/2}} \preccurlyeq \frac{T^{3/2}}{H^{1/2}} \preccurlyeq TH. 
  \end{split}
\end{displaymath}
This finishes the proof of  Theorem \ref{prop3}.

\section{Proof of the main results}\label{final}

We deduce Theorems \ref{thm1} and \ref{thm2}
by applying the combination of
Theorems \ref{lower-bound}, \ref{thm:triple-whittaker} and
\ref{prop3} to the
triple product formula \eqref{period}.
Recall the set-up and the choice of test vectors from the
beginning of \S \ref{whittaker}.
Ichino's formula \cite{Ic}
says that the identity
\eqref{period}
holds with $\mathcal{L}_{\infty}$
a constant multiple of the matrix coefficent integral
as
in
Theorem \ref{lower-bound}.
Therefore
$$\frac{L(1/2, \pi_1\otimes \pi_2 \otimes \pi_3)}{L(1, \Ad^2 \pi_1)L(1, \Ad^2 \pi_2)
L(1, \Ad^2 \pi_3)} \ll Q \Big|\int_{g\in\Gamma\backslash G} v_1v_2v_3(g) dg\Big|^2$$
with $\Gamma = {\rm PGL}_2(\Bbb{Z})$. 
 We now appeal to the following regularized version of
\eqref{zag}, cf.\ \cite[(2.10)]{Ju}
or
\cite[Thm 5.6]{MR3356036}.
Recall from \S\ref{sec:basic-notation}
our notation for Iwasawa coordinates
on $G$.
For a $\Gamma$-invariant function $\Phi$ on $G$
of rapid decay near the cusps,
we have 
\begin{multline*}
\int_{g\in \Gamma\backslash G} \Phi(g) dg = 2\int_{\Re(s)=a}
\pi^{-s} \Gamma(s)\zeta(2s) (2s-1)\\ \times \Big(
\int_{\Gamma_N\backslash G^+} \Phi(n(x) a(y) k(\theta)) y^{s} dx \frac{dy}{y^2}   d\theta  \Big)\frac{ds}{2\pi i}
\end{multline*}
where
$\Gamma_N := \Gamma \cap N$
denotes the upper-triangular unipotent subgroup of $\Gamma$,
$G^+$ denote the positive-determinant subgroup
of $G$, 
and the parameter
$a > 1$
is at our disposal. Note that the $s$-integral is rapidly
convergent
due to the decay of $\Gamma(s)$.
We apply this formula with $\Phi = v_1v_2v_3$,  insert the Fourier expansion and integrate over $x$. This gives
\begin{displaymath}
\begin{split}
&\int_{\Gamma_N\backslash G} v_1v_2v_3(n(x) a(y) k(\theta)) y^{s} dx \frac{dy}{y^2}   d\theta  \\
& = \sum_{\substack{n_1, n_2, n_3 \not= 0\\ n_1 + n_2 + n_3 = 0}} \frac{\lambda_{\pi_1}(n_1)\lambda_{\pi_2}(n_2)\lambda_{\pi_3}(n_3)}{\sqrt{n_1n_2n_3}} \int_{0}^{\infty} F( n_1 y,  n_2y ) y^{s} \frac{dy}{y^2}
\end{split}
\end{displaymath}
{with $F$ as in \eqref{defF}.} 
  Shifting the $s$-contour to the far right, we can restrict the
  $y$-integral to $y \gg Q^{-\eps}$, the remaining error being
  $O(Q^{-N})$. On the other hand, since $n_1 \in
  \Bbb{Z}\setminus \{0\}$, the upper bound for $F$ in Theorem
  \ref{thm:triple-whittaker} implies that we can also restrict
  to $y \ll Q^{\eps}$ and $n_1 \preccurlyeq 1$ at the cost of a
  negligible error. We insert the asymptotic formula from
  Theorem \ref{thm:triple-whittaker}. The error terms
  $\mathcal{E}_2, \mathcal{E}_3$ 
  contribute negligibly,
  while $\mathcal{E}_1$ contributes $\preccurlyeq Q^{-1/2}$.
  It remains to consider the contribution of $\mathcal{N}$.
  We smoothly decompose
  the sum over $n_2$ into dyadic ranges
  $n_2 \asymp M \preccurlyeq Q$. 
  We focus on the contribution from $M > 0$;
  the case $M < 0$ may be treated similarly.
  We estimate the
  contribution from $M \preccurlyeq Q^{1/3}$ trivially (using Cauchy--Schwarz and standard Rankin--Selberg bounds) by
  $$\sum_{n_1 \preccurlyeq 1} |\lambda_{\pi_1}(n_1)| \sum_{n_2 \asymp M} \frac{|\lambda_{\pi_2}(n_2) \lambda_{\pi_3}(n_1+n_2)|}{|n_2|} \Big( \frac{|n_2|}{Q}\Big)^{3/4} \preccurlyeq \frac{ M^{3/4}}{ Q^{3/4}} \preccurlyeq Q^{-1/2}.$$
  For $M\geq Q^{1/3 + \eps}$ we insert the full asymptotic
  formula for $\mathcal{N}$,
  giving
  \begin{displaymath}
 \begin{split}
& \frac{L(1/2, \pi_1\otimes \pi_2 \otimes \pi_3)}{L(1, \Ad^2 \pi_1)L(1, \Ad^2 \pi_2)
L(1, \Ad^2 \pi_3)} 
\preccurlyeq 1 + \\
&Q \sup_{
    Q^{1/3+\eps} \leq M  \preccurlyeq Q
}
\sum_{\nu \preccurlyeq 1} \Big|
\sum_{m}
V\Big(\frac{m}{M}\Big) \frac{\lambda_{\pi_2}(m) \lambda_{\pi_3}(m+\nu)}{\sqrt{|m(m + \nu)|} } \Big(\frac{|m|}{Q}\Big)^{3/4} e\Big(\pm 2 \sqrt{Q} \Psi\Big(\frac{\nu}{m}\Big)\Big)\Big|^2
\end{split}
\end{displaymath}
for some nice function $V$
(cf. \S\ref{weight}).
We implicitly restrict the sum over $m$
to the support of $V(m/M)$;
in particular,
$m \neq 0, -\nu$.
In the given range of $M$,
we can replace $\Psi(y)$ with $|y|^{1/2}$,
the error being flat.
By the usual procedure
(\S\ref{weight})
of separating
variables
and changing the weight  function, we arrive at the upper bound
$$1 +   \sup_{Q^{1/3+\eps} \leq M  \leq Q}  \frac{1}{(MQ)^{1/2}}   \sum_{\nu \preccurlyeq 1} \Big| \sum_{m}V\Big(\frac{m}{M}\Big)  \lambda_{\pi_2}(m) \lambda_{\pi_3}(m+\nu)  e\big(\pm 2 \sqrt{Q|\nu/m|}\big)\Big|^2.
$$
For $\pi_1, \pi_2$ fixed, we average this over $\pi_3$ in a spectral window $T \leq \sqrt{Q} \leq T+H$ with $H = T^{1/3+\eps}$. From Theorem \ref{prop3} 
we obtain the first bounds in Theorems \ref{thm1} and \ref{thm2}. The second bound in Theorem \ref{thm1} follows directly by dropping all but one term (using positivity of central triple product values), while the second bound in Theorem \ref{thm2} follows from a standard argument based on the functional equation (see e.g.\ \cite[p.\ 63]{Go}).

\end{document}